\numberwithin{equation}{section}
\numberwithin{equation}{section}
\newtheorem{theorem}{Theorem}[section]
\newtheorem{proposition}[theorem]{Proposition}
\newtheorem{corollary}[theorem]{Corollary}
\newtheorem{lemma}[theorem]{Lemma}
\newtheorem{definition}[theorem]{Definition}
\theoremstyle{definition}
\newtheorem{remark}[theorem]{Remark}
\newtheorem{example}[theorem]{Example}
\newcommand{\hooklongrightarrow}{\lhook\joinrel\longrightarrow}
\author{Anderson Vera
}
\address{Institut de Recherche Math\'ematique Avanc\'ee,  Universit\'e de Strasbourg, 7 rue Ren\'e Descartes,  67084 Strasbourg, France}
\email{vera@math.unistra.fr}
\subjclass[2010]{57M27, 57M05, 57S05}
\keywords{$3$-manifold, cobordism, mapping class group,  Johnson homomorphisms, Lagrangian mapping class group, Johnson-Levine homomorphisms, LMO invariant, LMO functor}
\date{\today}
\title{Johnson-Levine Homomorphisms and the tree reduction of the LMO functor}
\begin{document}

%_____________________________________________________________________________________________________
\begin{abstract}

Let $\mathcal{M}$ denote the mapping class group of $\Sigma$, a compact connected  oriented surface  with one boundary component. The action of $\mathcal{M}$ on the nilpotent quotients of $\pi_1(\Sigma)$ allows to define the so-called  Johnson filtration and  the Johnson homomorphisms. J. Levine introduced a new filtration of $\mathcal{M}$, called the \emph{Lagrangian filtration}. He also introduced  a version of the Johnson homomorphisms for this new filtration. The first term of the Lagrangian filtration is the \emph{Lagrangian mapping class group}, whose definition involves a handlebody bounded by $\Sigma$, and which contains the Torelli group. These constructions extend in a natural way to the monoid of homology cobordisms. Besides, D. Cheptea, K. Habiro and G. Massuyeau  constructed a functorial extension of the LMO invariant, called the LMO functor, which takes values in a category of diagrams. In this paper we give a topological interpretation of the \emph{upper part} of the tree reduction of the LMO functor in terms of the homomorphisms defined by J. Levine for the Lagrangian mapping class group. We also compare the Johnson filtration with the filtration introduced by J. Levine.
\end{abstract}

\maketitle
\tableofcontents
%_____________________________________________________________________________________________________
%---------------------------------------
%---------------------------------------
%---------------------------------------
%---------------------------------------

\section{Introduction}%section 1

Let $\Sigma$ be a compact connected  oriented surface  with one boundary component and let $\mathcal{M}$ denote the mapping class group of $\Sigma$.  The interaction between the study of $3$-manifolds and that of the mapping class group is well known. In some sense, the algebraic structure of $\mathcal{M}$ and of its subgroups  is reflected in the topology of $3$-manifolds. For instance, the subgroup of homeomorphisms acting trivially in homology, known as the \emph{Torelli group} and denoted by $\mathcal{I}$, is tied to homology $3$-spheres. In this direction, D. Johnson \cite{MR718141} and S. Morita \cite{MR1224104} studied the  mapping class group by using its action on the nilpotent quotients of the fundamental group of $\Sigma$. This action allows to define the Johnson  filtration of $\mathcal{M}$; the $k$-th term $J_k\mathcal{M}$ of this filtration consists of the elements in $\mathcal{M}$ acting trivially on the $k$-th nilpotent quotient of  $\pi_1(\Sigma)$. On the Johnson filtration it is possible to define  the Johnson homomorphisms which play an important role in the structure of the Torelli group. For instance, the first Johnson homomorphism appears in the computation of the abelianization of $\mathcal{I}$ \cite{MR793179}. S. Morita also  discovered the strong relation between the structure of the Torelli group and some properties of the Casson invariant of homology $3$-spheres \cite{MR1014464,MR1133875,MR1617632}. The Johnson homomorphisms take values in a Lie subalgebra of the derivation Lie algebra of a free Lie algebra constructed from the first homology group of $\Sigma$; this Lie subalgebra admits a diagrammatic description in terms of  \emph{tree-like Jacobi diagrams}.

The Johnson filtration and the Johnson homomorphisms  generalize in a natural way to the monoid of \emph{homology cobordisms} $\mathcal{C}$ of $\Sigma$, that is, homeomorphism classes of pairs $(M,m)$, where $M$ is a compact oriented $3$-manifold and $m:\partial(\Sigma\times[-1,1])\rightarrow\partial M$ is an orientation-preserving homeomorphism such that the \emph{top} and \emph{bottom} restrictions of $m$ induce  isomorphisms in homology \cite{MR2131016}. In particular, the mapping class group of $\Sigma$  embeds into the monoid of homology cobordisms by associating  to each $h\in\mathcal{M}$ the cobordism $(\Sigma\times\left[-1,1\right], m^h)$ where $m^h$  is the orientation-preserving homeomorphism defined on the top surface $\Sigma\times\{1\}$  by $h$ and the identity elsewhere. Under this embedding,  the Torelli group is mapped into the monoid of homology cobordisms $(M,m)$ such that the top and bottom restrictions of $m$ induce the \emph{same} isomorphisms in homology. This class of cobordisms is denoted by $\mathcal{IC}$ and they are called \emph{homology cylinders}.

On the other hand, T. Le, J. Murakami and T. Ohtsuki  defined in \cite{MR1604883} a universal finite type invariant for homology $3$-spheres called the   \emph{LMO invariant}. This invariant was extended by D. Cheptea, K. Habiro and G. Massuyeau in \cite{MR2403806} to a functor $\widetilde{Z}:\mathcal{LC}ob_q\rightarrow {}^{ts}\!\!\mathcal{A}$, called the  \emph{LMO functor}, from a category of cobordisms (with a homological condition)  between bordered surfaces to a category of Jacobi diagrams. In particular, the monoid of homology cylinders $\mathcal{IC}$ is a subset of morphisms in $\mathcal{LC}ob_q$. The construction of the LMO functor is sophisticated: it uses the \emph{Kontsevich integral}, which requires the choice of a \emph{Drinfeld associator}, and it also uses several combinatorial operations in the space of Jacobi diagrams. In consequence, it is not clear which topological  information is encoded by the LMO functor.

In \cite{MR1783857}, N. Habegger and G. Masbaum gave a topological interpretation of the  tree reduction of the Kontsevich integral in terms of Milnor invariants. Following the same spirit,  D. Cheptea, K. Habiro and G. Massuyeau gave in \cite{MR2403806} a topological interpretation of the leading term of the  tree reduction of the LMO functor in terms of the first non-vanishing Johnson homomorphism. This was improved by G. Massuyeau in \cite{MR2903772}, where he gave an interpretation of the full tree reduction of the LMO functor on $\mathcal{IC}$. 

In \cite{MR1823501,MR2265877}, J. Levine introduced a different filtration of the mapping class group as follows. Let $V$ be a handlebody of genus $g$ and fix a disk $D$ on the boundary of $V$ so that $\partial V=\Sigma\cup D$, where $D$ and $\Sigma$ are glued along their boundaries. Denote by $\iota$ the inclusion of $\Sigma$ into $\partial V\subseteq V$. Let us denote by $A$ and $\mathbb{A}$ the subgroups $\text{ker}(H_1(\Sigma)\stackrel{\iota_*}{\longrightarrow} H_1(V))$ and $\text{ker}(\pi_1(\Sigma)\stackrel{\iota_{\#}}{\longrightarrow}\pi_1(V))$, respectively. The \emph{Lagrangian mapping class group} of $\Sigma$, denoted by $\mathcal{L}$, consists of the elements in $\mathcal{M}$ preserving the subgroup $A$. The \emph{strongly Lagrangian mapping class group} of $\Sigma$, denoted by $\mathcal{IL}$,  consists of the elements in $\mathcal{L}$ which are the identity on $A$. The $k$-th term $J_k^L\mathcal{M}$ of the \emph{Lagrangian filtration} of $\mathcal{M}$, which we shall call here the  \emph{Johnson-Levine filtration}, consists of the elements $h$ in $\mathcal{IL}$ such that $\iota_{\#}h_{\#}(\mathbb{A})$ is contained in the $(k+1)$-st term of the lower central series of $\pi_1(V)$. 

 J. Levine also  defined a version of the Johnson homomorphisms for this filtration, that we shall call here the  \emph{Johnson-Levine homomorphisms}, which take values in an abelian group  that can be described in terms of $H_1(V)$. This abelian group also admits a diagrammatic description in terms of tree-like Jacobi diagrams. One of J. Levine's  main motivations was to understand the  relation between  the Johnson-Levine homomorphisms  and finite type invariants of homology spheres.   The first Johnson-Levine homomorphism  comes up in the computation of the abelianization of $\mathcal{IL}$, found by T. Sakasai in \cite{MR2916276}. It also appears in the work of N. Broaddus, B. Farb and A. Putman  \cite{MR2803852} to compute the distortion of $\mathcal{IL}$ as a subgroup of $\mathcal{M}$. 

The Johnson-Levine filtration and the Johnson-Levine homomorphisms  generalize in a natural way to the monoid of homology cobordisms. Thus, it is natural to wonder about the relation of these homomorphisms with the LMO functor. The aim of this paper is to make explicit this relation.  The main result is a topological interpretation of the leading term in the  \emph{upper part} of the tree reduction of the LMO functor in terms of the first non-vanishing Johnson-Levine homomorphism. This sheds some new light on the topological information encoded by the LMO functor. One key point in the proof of this result is to compare the Johnson filtration and the Johnson-Levine filtration. This comparison was already carried out by J. Levine in degrees $1$ and $2$ for the mapping class group in \cite{MR2265877}. In this direction, a second main result of this paper is a comparison of the two filtrations in all degrees for homology cobordisms up to some surgery equivalence relations. These equivalence relations were introduced independently by M. Goussarov in \cite{MR1793618,MR1715131} and by K. Habiro in \cite{MR1735632} in connection with the theory of finite type invariants.

The organization of the paper is as follows. In Section \ref{seccion2} we review the definitions of the Johnson filtration and Johnson homomorphisms in the mapping class group case, as well as in the case of  homology cobordisms. We also  explain the bottom-top tangle presentation of homology cobordisms, which is a way to present homology cobordisms by using a kind of knotted objects. Finally, in this section,  we review the   Milnor-Johnson correspondence which relates the Milnor invariants with the Johnson homomorphisms.  Section \ref{seccion3} deals with the Johnson-Levine filtration and Johnson-Levine homomorphisms in the mapping class group case, as well as in the case of  homology cobordisms.  Section \ref{seccion4} provides a detailed exposition of important properties of the Johnson-Levine homomorphisms, and a comparison of the Johnson filtration with the Johnson-Levine filtration. Finally, Section \ref{seccion5} is devoted to the topological interpretation of the upper part of the tree reduction of the LMO functor.

\medskip

\noindent\textbf{Notation.} For a group $G$, the \emph{lower central series} is the descending chain of subgroups $\{\Gamma_k G\}_{k\geq1}$ defined by $\Gamma_1 G:=G$ and $\Gamma_{k+1}G:=[G,\Gamma_{k}G]$. If $x\in G$  we denote the \emph{nilpotent} class of $x$ in $G/\Gamma_k G$ interchangeably by $\{x\}_k$ or $x\Gamma_k G$. If $f:(X,x)\rightarrow (Y,y)$ is a continuous map between two pointed topological spaces  $(X,x)$ and $(Y,y)$, we denote by $f_{\#}:\pi_1(X,x)\rightarrow\pi_1(Y,y)$ and $f_{*}:H_1(X;\mathbb{Z})\rightarrow H_1(Y;\mathbb{Z})$ the induced maps in homotopy and homology, respectively. Finally, when we draw framed knotted objects we use the blackboard framing convention. 

\medskip 

\noindent\textbf{Acknowledgements.} I am deeply grateful to my advisor Gw\'ena\"{e}l Massuyeau for his  encouragement, helpful advice and careful reading.

%_____________________________________________________________________________________________________
%---------------------------------------
%---------------------------------------
%---------------------------------------
%---------------------------------------

\section{Johnson homomorphisms}\label{seccion2}%section 2

For every non-negative integer $g$ denote by $\Sigma$ (or  by $\Sigma_{g,1}$ if there is  ambiguity) a compact connected oriented surface of genus $g$ with  one boundary component.  Let us fix a base point $*\in\partial\Sigma$ and set $\pi:=\pi_1(\Sigma,*)$ and $H:=H_1(\Sigma;\mathbb{Z})$.
%_____________________________________________________________________________________________________
%---------------------------------------
%---------------------------------------
%---------------------------------------
%---------------------------------------
\subsection{Mapping class group}

Denote by $\mathcal{M}$ (or  by $\mathcal{M}_{g,1}$ if there is ambiguity) the \emph{mapping class group} of $\Sigma$, that is, the group of isotopy classes of orientation-preserving diffeomorphisms of $\Sigma$ fixing $\partial \Sigma$ point-wise. The isotopy class of $h$ in $\mathcal{M}$ is still denoted by $h$.
The \emph{Dehn-Nielsen-Baer representation} is the injective group homomorphism
$$\rho:\mathcal{M}\longrightarrow \text{Aut}(\pi),$$
\noindent that maps the isotopy class $h\in \mathcal{M}$ to the induced map in homotopy $h_\#\in\text{Aut}(\pi)$. 

Consider the lower central series $\{\Gamma_k\pi\}_{k\geq1}$ of $\pi$. The \emph{nilpotent version}  of the Dehn-Nielsen-Baer representation,  $\rho_{k}:\mathcal{M}\rightarrow \text{Aut}(\pi/\Gamma_{k+1}\pi)$, is  defined as the composition
\begin{equation}\label{ecuacion2.1}%%%%%%%%%%%%%%%%%%%%%%%%%%%%%%%%%%%%%%%%%%%%%%%%%%%%%%%%%%%%%%%%%%%%%%%%%%%%%%%
\mathcal{M}\stackrel{\rho}{\longrightarrow}\text{Aut}(\pi)\longrightarrow \text{Aut}(\pi/\Gamma_{k+1}\pi).
\end{equation}

The \emph{Johnson filtration} is the descending chain of subgroups $\{J_k\mathcal{M}\}_{k\geq1}$ of $\mathcal{M}$   where $J_k\mathcal{M}$ is the kernel of $\rho_k$. In particular, $J_1\mathcal{M}$ is the set of elements in $\mathcal{M}$ acting trivially in homology. This subgroup is denoted by $\mathcal{I}$ (or $\mathcal{I}_{g,1}$) and it is called the \emph{Torelli group}.

Associated to the Johnson filtration, there is a family of group homomorphisms called the \emph{Johnson homomorphisms}. These homomorphisms are of great  importance in the study of the structure of the mapping class group and its subgroups. They were introduced by D. Johnson in \cite{MR579103,MR718141} and extensively studied by S. Morita in \cite{MR1133875,MR1224104}. We refer to \cite{MR3380338} for a survey on this subject.

For every positive integer $k$, the $k$-th \emph{Johnson homomorphism}
\begin{equation}\label{ecuacion2.2}%%%%%%%%%%%%%%%%%%%%%%%%%%%%%%%%%%%%%%%%%%%%%%%%%%%%%%%%%%%%%%%%%%%%%%%%%%%%%%%
\tau_k: J_k\mathcal{M}\longrightarrow \text{Hom}(H,\Gamma_{k+1}\pi/\Gamma_{k+2}\pi)\cong H^*\otimes \Gamma_{k+1}\pi/\Gamma_{k+2}\pi \cong H\otimes \mathfrak{L}_{k+1}(H),
\end{equation}
\noindent is defined by sending the isotopy class  $h\in J_k\mathcal{M}$ to the map
$$\{x\}_2\longmapsto \rho_{k+1}(h)(\{x\})\{x\}_{k+1}^{-1}\in\frac{\Gamma_{k+1}\pi}{\Gamma_{k+2}\pi},$$

\noindent for all $x\in\pi$. The second isomorphism in  (\ref{ecuacion2.2}) is given by  the identification $H\stackrel{\sim}{\longrightarrow} H^*$ that maps $x$ to $\omega(x, \cdot)$ where  $\omega:H\otimes H\rightarrow\mathbb{Z}$ is the \emph{intersection form},  together with the identification of $\Gamma_{k+1}\pi/\Gamma_{k+2}\pi$ with the term of degree $k+1$ in the \emph{free Lie algebra}  
$$\mathfrak{L}(H)=\bigoplus_{k\geq 1}\mathfrak{L}_{k}(H)$$
generated by the $\mathbb{Z}$-module $H$.
Moreover, S. Morita proved in \cite[Corollary 3.2]{MR1224104} that the $k$-th Johnson homomorphism takes values in the kernel $D_k(H)$ of the Lie bracket  $\left[\ ,\ \right]: H\otimes\mathfrak{L}_{k+1}(H)\rightarrow\mathfrak{L}_{k+2}(H)$.

%_____________________________________________________________________________________________________
%---------------------------------------
%---------------------------------------
%---------------------------------------
%---------------------------------------
\subsection{Homology cobordisms and bottom-top tangles}\label{subsection2.2}

In this subsection we recall from  \cite{MR2403806}   the definition of the monoid of homology cobordisms and their presentation by bottom-top tangles, that is, a presentation  by a special kind of  knotted objects. The bottom-top tangle presentation is also used in the definition of the LMO functor as we will see in Section \ref{seccion5}. 

The notion of homology cobordism was introduced independently by M.  Goussarov in \cite{MR1715131} and by K. Habiro in \cite{MR1735632} in connection with the theory of finite type invariants. A \emph{homology cobordism} of $\Sigma$ is the equivalence class of a pair $M=(M,m)$, where $M$ is a compact connected oriented 3-manifold and $m:\partial(\Sigma\times[-1,1])\rightarrow\partial M$ is an orientation-preserving homeomorphism, such that the \emph{bottom} and \emph{top} inclusions
$m_{\pm}(\cdot):=m(\cdot,\pm1):\Sigma\rightarrow M$ induce isomorphisms in homology. Two pairs $(M,m)$ and $(M',m')$ are \emph{equivalent} if there exists an orientation-preserving homeomorphism $\varphi:M\rightarrow M'$ such that $\varphi\circ m=m'$.

The \emph{composition} $(M,m)\circ (M',m')$ of two homology cobordisms $(M,m)$ and $(M',m')$ of $\Sigma$ is the equivalence class of the pair $(\widetilde{M},m_-\cup m'_+)$, where $\widetilde{M}$ is obtained by gluing the two 3-manifolds $M$ and $M'$ by using the map $m_+\circ(m'_-)^{-1}$. This composition is associative and has as identity element the equivalence class of the trivial cobordism $( \Sigma\times[-1,1], \text{Id})$.  Let us denote by $\mathcal{C}$ (or by $\mathcal{C}_{g,1}$ if there is ambiguity) the \emph{monoid of homology cobordisms} of $\Sigma$. 

\begin{example}\label{ejemplo1} The mapping class group $\mathcal{M}$ can be embedded into $\mathcal{C}$ by associating to any $h\in\mathcal{M}$ the equivalence class of the pair $(\Sigma\times[-1,1], m^h)$, where $m^h:\partial(\Sigma\times[-1,1])\rightarrow\partial (\Sigma\times[-1,1])$ is the orientation-preserving homeomorphism defined by $m^h(x,1)=(h(x),1)$ and  $m^h(x,t)=(x,t)$ for $t\not=1$. The submonoid obtained in this way is precisely the group of  invertible elements of $\mathcal{C}$, see \cite[Proposition 2.4]{MR2952770}. 
\end{example}

Let us now turn to the definition of bottom-top tangles. Consider the square $[-1,1]^2$. For all $g\geq 1$, fix $g$ pairs of different points $(p_1,q_1)$,  $\ldots$, $(p_g, q_g)$ in $[-1,1]^2$ distributed uniformly along the horizontal axis $\{(x,0)\ |\ x\in[-1,1]\}$, see Figure \ref{figura4.5}$(a)$. A \emph{bottom-top tangle} of type $(g,g)$ is an equivalence class of  pairs $(B,\gamma)$, where $B=(B,b)$ consists of a compact connected oriented $3$-manifold $B$ and an orientation-preserving homeomorphism $b:\partial([-1,1]^3)\rightarrow \partial B$; and $\gamma=(\gamma^+,\gamma^-)$ is a framed oriented tangle with $g$ top components $\gamma_1^+,\ldots,\gamma_g^+$ and $g$ bottom components $\gamma_1^-,\ldots,\gamma_g^-$ such that
\begin{itemize}
\item each $\gamma_j^+$ runs from $p_j\times 1$ to $q_j\times1$,
\item each $\gamma_j^-$ runs from $q_j\times (-1)$ to $p_j\times(-1)$.
\end{itemize}

\noindent Two such pairs $(B,\gamma)$ and $(B',\gamma')$ are \emph{equivalent} if there is an orientation-preserving homeomorphism $\varphi:B\rightarrow B'$ such that $\varphi\circ b=b'$ and $\varphi(\gamma)=\gamma'$. See Figure \ref{figura4.5}$(b)$ for an example.
\begin{figure}[ht!]
										\centering
                        \includegraphics[scale=0.57]{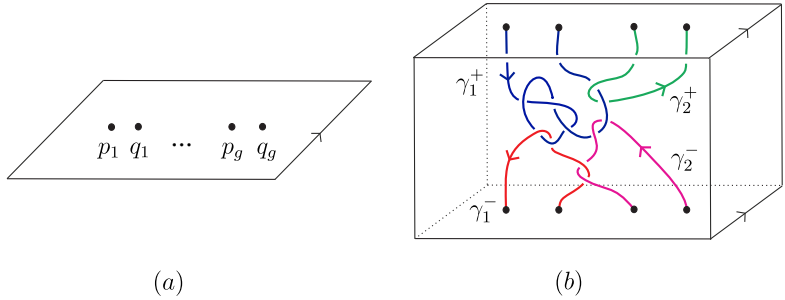}
												\caption{($b$)  bottom-top tangle of type $(2,2)$ in $[-1,1]^{3}$.}
												\label{figura4.5}
\end{figure}

Let $(M,m)$ be a homology cobordism of  $\Sigma_{g,1}$. We associate a bottom-top tangle of type $(g,g)$ to $(M,m)$ as follows. Let us   fix a system of meridians and parallels $\{\alpha_1,\ldots,\alpha_g,\beta_1,\ldots,\beta_g\}$ of $\Sigma_{g,1}$ as  in  Figure \ref{figura3.0}. 

\begin{figure}[ht!] 
										\centering
                        \includegraphics[scale=0.65]{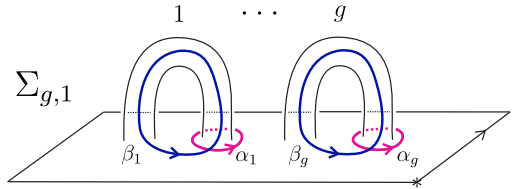}
												\caption{System of meridians and parallels.}
												\label{figura3.0}
\end{figure}

\noindent Then  attach $g$  $2$-handles  on the bottom surface of $M$ by sending the cores of the $2$-handles to the curves $m_-(\alpha_i)$. In the same way,  attach $g$ $2$-handles on the top surface of $M$ by sending the cores to the curves $m_+(\beta_i)$. This way we obtain a compact connected oriented $3$-manifold $B$ and an orientation-preserving homeomorphism $b:\partial([-1,1]^3)\rightarrow\partial B$. The pair $B=(B,b)$ together with the cocores of the $2$-handles, determine a bottom-top tangle $(B,\gamma)$ of type $(g,g)$. We call $(B,\gamma)$ the \emph{bottom-top tangle presentation} of $(M,m)$. See Figure \ref{figura4.7} for an example.

\begin{figure}[ht!] 
										\centering
                        \includegraphics[scale=0.7]{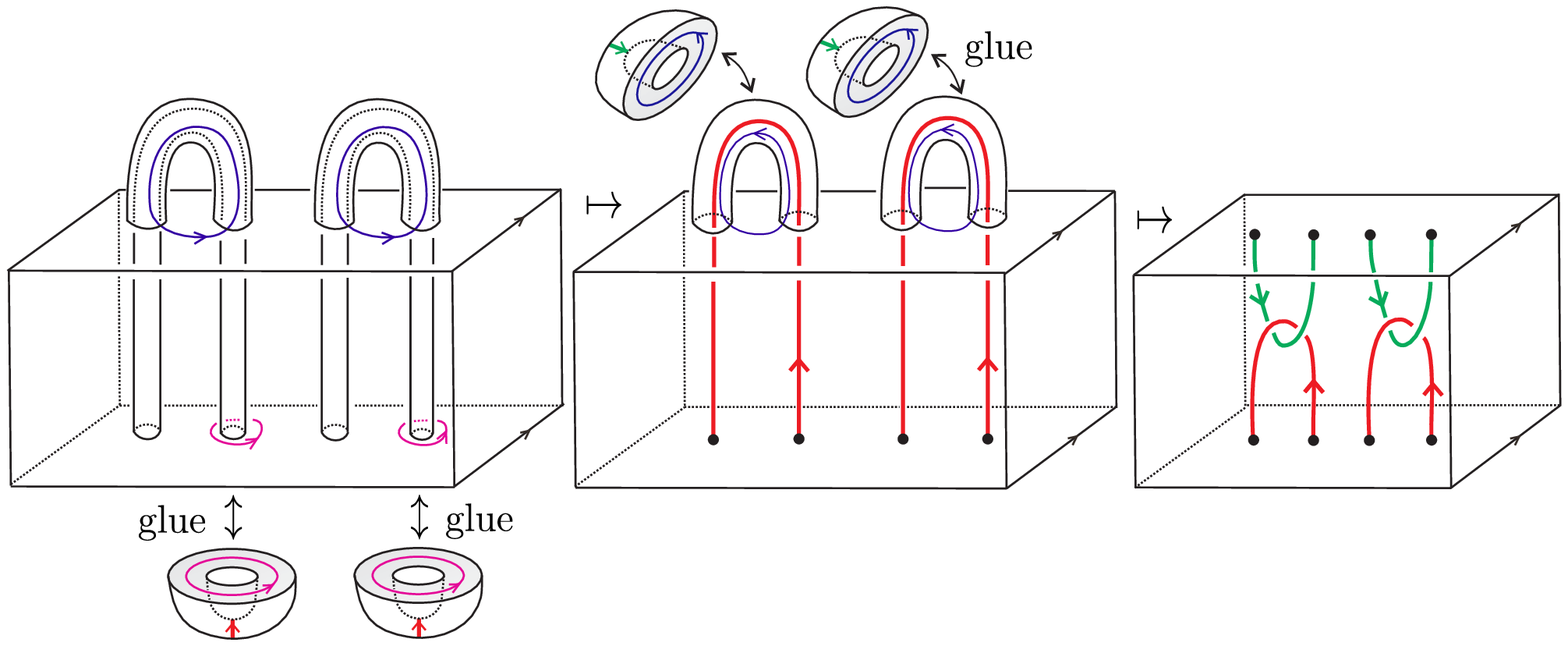}
												\caption{From homology cobordisms to bottom-top tangles.}
												\label{figura4.7}
\end{figure}

We emphasize that the bottom-top tangle presentation of homology cobordisms depends on the choice of a system of meridians and parallels  of $\Sigma$. From now on, when we say ``the bottom-top tangle presentation'' of a homology cobordism we mean the bottom-top tangle presentation associated to the choice of meridians and parallels of $\Sigma$ as  in  Figure \ref{figura3.0}.

We are mainly interested in bottom-top tangles in homology cubes. A \emph{homology cube} is a homology cobordism of  $\Sigma_{0,1}$. In particular, if $(B,b)$ is such a cobordism we have $H_*(B;\mathbb{Z})\cong H_*([-1,1]^3;\mathbb{Z})$.

\begin{definition}\label{linkingbt}
Let $(B,\gamma)$ be a bottom-top tangle of type $(g,g)$ with $B$ a homology cube. Let us label its connected components by $\{1^+,\ldots,g^+\}\cup\{1^-,\ldots,g^-\}=:\left\lfloor g \right\rceil^+ \cup\left\lfloor  g\right\rceil^-$, where the label $k^{\pm}$ is assigned to the component $\gamma_k^{\pm}$. The \emph{linking matrix} of $(B,\gamma)$ is the matrix, with rows and columns indexed by  $\left\lfloor g \right\rceil^+ \cup\left\lfloor  g\right\rceil^-$, defined by
\begin{equation}\label{defilinkingbt}
\text{\emph{Lk}}_B(\gamma):= \text{\emph{Lk}}_{\hat{B}}(\hat{\gamma}),
\end{equation}
where $\hat{B}$ is the homology sphere $B\cup_b(\mathbb{S}^3\setminus \left[-1,1\right]^3)$ and $\hat{\gamma}$ is the framed oriented link in $\hat{B}$ whose component $\hat{\gamma}_j^{\pm}$ is obtained from $\gamma_j^{\pm}$ by connecting $p_j\times(\pm 1)$ with $q_j\times(\pm 1)$ with a small arc, and $\text{\emph{Lk}}_{\hat{B}}(\hat{\gamma})$ denotes the usual linking matrix of $\hat{\gamma}$ in the homology sphere  $\hat{B}$.
\end{definition}

Let $(M,m)\in\mathcal{C}$ and let $(B,\gamma)$ be its bottom-top tangle presentation. If $B$ is a homology cube, we  define the linking matrix $\text{Lk}(M)$ of $(M,m)$ as the linking matrix of its bottom-top tangle presentation. 

%%%%%%%%%%%%%%%%%%%%%%%%%%%%%%%%%%%%%%%%%%%%%%%%%%%%%%
%%%%%%%%%%%%%%%%%%%%%%%%%%%%%%%%%%%%%%%%%%%%%%%%%%%%%%
%%%%%%%%%%%%%%%%%%%%%%%%%%%%%%%%%%%%%%%%%%%%%%%%%%%%%%
%%%%%%%%%%%%%%%%%%%%%%%%%%%%%%%%%%%%%%%%%%%%%%%%%%%%%%
%%%%%%%%%%%%%%%%%%%%%%%%%%%%%%%%%%%%%%%%%%%%%%%%%%%%%%
\subsection{Johnson homomorphisms for homology cobordisms}
The Johnson filtration and the Johnson homomorphisms of $\mathcal{M}$ extend in a natural way to the monoid of homology cobordisms, see  \cite{MR2131016}. Given $M=(M,m)$ in $\mathcal{C}$, since $m_+$ and $m_-$ induce isomorphisms in homology in all degrees,  by  Stallings' theorem  \cite[Theorem 3.4]{MR0175956}, the maps $m_{\pm,*}:\pi/\Gamma_k\pi\rightarrow\pi_1(M,*)/\Gamma_k\pi_1(M,*)$ are isomorphisms for all $k\geq 2$.  Hence, the  nilpotent version of the Dehn-Nielsen-Baer representation of the mapping class group  can be extended to  $\mathcal{C}$. For every positive integer $k$ define 
\begin{equation}\label{ecuacion2.3}%%%%%%%%%%%%%%%%%%%%%%%%%%%%%%%%%%%%%%%%%%%%%%%%%%%%%%%%%%%%%%%%%%%%%%%%%%%%%%%
\rho_{k}:\mathcal{C}\longrightarrow \text{Aut}(\pi/\Gamma_{k+1}\pi),
\end{equation}
\noindent by sending $(M,m)\in\mathcal{C}$ to  the automorphism $m_{-,*}^{-1}\circ m_{+,*}$.

The \emph{Johnson filtration} $\{J_k\mathcal{C}\}_{k\geq1}$ of $\mathcal{C}$ is the descending chain of submonoids 
$$\mathcal{C}\supseteq J_1\mathcal{C}\supseteq J_2\mathcal{C}\supseteq\cdots\supseteq J_k\mathcal{C}\supseteq J_{k+1}\mathcal{C}\supseteq\cdots$$
where  $J_k\mathcal{C}:=\text{ker}(\rho_{k})$ for all $k\geq1$. The submonoid $J_1\mathcal{C}$ is denoted by $\mathcal{IC}$ and it is called the \emph{monoid of homology cylinders}.  Notice that under the embedding  described in Example \ref{ejemplo1}, the Torelli group $\mathcal{I}$ is mapped into $\mathcal{IC}$. Let $M=(M,m)\in\mathcal{C}_{g,1}$ and $(B,\gamma)$ be its bottom-tangle presentation. We have that $M$ belongs to $\mathcal{IC}$ if and only if  $B$ is a homology cube and $\text{Lk}(M)=\left( \begin{smallmatrix} 0&\text{Id}_g\\ \text{Id}_g&0 \end{smallmatrix} \right)$, see Lemma \ref{lemmalinking}.

For $k\geq 1$ the $k$-th \emph{Johnson homomorphism} for homology cobordisms
\begin{equation}\label{ecuacion2.4}%%%%%%%%%%%%%%%%%%%%%%%%%%%%%%%%%%%%%%%%%%%%%%%%%%%%%%%%%%%%%%%%%%%%%%%%%%%%%%%
\tau_k: J_k\mathcal{C}\longrightarrow  H\otimes \mathfrak{L}_{k+1}(H),
\end{equation}
\noindent is defined as in the mapping class group case. In this case we also have that $\tau_k$ takes values in $D_k(H)$. We refer to \cite{MR2131016} for further details.

It was shown by S. Morita that  the Johnson homomorphism $\tau_k:J_k\mathcal{M}\rightarrow D_k(H)$ is not surjective in general  (see \cite[Section 6]{MR1224104}). The situation changes if we enlarge the mapping class group to the monoid of homology cobordisms. 
S. Garoufalidis and J. Levine proved in \cite[Theorem 3, Proposition 2.5]{MR2131016} the following.

\begin{theorem}[S. Garoufalidis, J. Levine]\label{thmgl} For every positive integer $k$, the $k$-th Johnson homomorphism $\tau_k:J_k\mathcal{C}\rightarrow D_k(H)$ is surjective.
\end{theorem}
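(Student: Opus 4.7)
My plan is to prove surjectivity by explicit geometric construction using Goussarov--Habiro clasper (or ``Y-graph'') calculus. The main idea is that every element of $D_k(H)$ can be realized by surgery along a suitable tree clasper of degree $k$ embedded in the trivial cylinder $\Sigma\times[-1,1]$.

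First I would invoke the well-known identification of $D_k(H)$ with the $\mathbb{Z}$-module $\mathcal{T}_k(H)$ of connected tree-like Jacobi diagrams of degree $k$ (trees with $k+1$ univalent vertices labeled by elements of $H$, modulo the AS and IHX relations). Under this identification, it suffices to produce, for each such tree $T$ with leaves labeled $h_1,\ldots,h_{k+1}\in H$, a cobordism $M_T\in J_k\mathcal{C}$ with $\tau_k(M_T)$ equal to the class of $T$.

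The geometric input is the following: choose disjoint oriented simple closed curves $\gamma_1,\ldots,\gamma_{k+1}\subset \Sigma\times\{0\}\subset \Sigma\times[-1,1]$ representing the homology classes $h_1,\ldots,h_{k+1}$, and build a tree clasper $G_T\subset \Sigma\times[-1,1]$ whose combinatorial shape is $T$ and whose $k+1$ leaves are small push-offs of the curves $\gamma_i$. Let $M_T$ denote the result of clasper surgery along $G_T$ on the trivial cylinder. Since $G_T$ is a tree clasper of degree $k$, the cobordism $M_T$ is $Y_k$-equivalent to the trivial cylinder; in particular it is a homology cylinder, and by the standard relation between the $Y_k$-filtration and the Johnson filtration it belongs to $J_k\mathcal{C}$.

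The heart of the argument is the computation $\tau_k(M_T)=[T]$ in $D_k(H)$. For this I would compare the action of clasper surgery on $\pi/\Gamma_{k+2}\pi$ with the combinatorics of $T$: the effect of surgery along a tree clasper modulo higher-order terms is an iterated commutator of the leaf homology classes following the tree structure. Expanding the image of a basis loop $x$ under this surgery and reducing modulo $\Gamma_{k+2}\pi$ yields precisely the tree-commutator expression whose class in $H\otimes \mathfrak{L}_{k+1}(H)$ (after intersection pairing with $x$) matches $[T]$. Since $D_k(H)$ is generated by classes of tree diagrams, surjectivity follows. The main obstacle is this last verification: one must check that clasper surgery induces exactly the expected commutator action on $\pi/\Gamma_{k+2}\pi$, and that the topological AS and IHX moves on tree claspers (valid up to $Y_{k+1}$-equivalence) correspond to the algebraic AS and IHX relations defining $\mathcal{T}_k(H)\cong D_k(H)$. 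Once this compatibility is established, the proof is essentially formal.
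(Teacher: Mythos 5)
The realization half of your argument is fine: surgery on a degree~$k$ tree clasper in $\Sigma\times[-1,1]$ whose leaves represent $h_1,\dots,h_{k+1}$ does produce an element of $J_k\mathcal{C}$ with $\tau_k$ equal (up to sign) to the class of the tree, and the compatibility with AS/IHX is standard. The genuine gap is in your very first reduction: over $\mathbb{Z}$ there is \emph{no} identification of $D_k(H)$ with $\mathcal{T}_k(H)$. The map $\eta^{\mathbb{Z}}_k:\mathcal{T}_k(H)\to D_k(H)$ is an isomorphism only after tensoring with $\mathbb{Q}$, and in even degrees it is not even surjective. By Levine's exact sequences (recalled in Section \ref{section4.4} of this paper), $\mathcal{T}_k(H)$ surjects onto the quasi-Lie kernel $D^q_k(H)$; the natural map $D^q_{2j-1}(H)\to D_{2j-1}(H)$ is onto, but $D^q_{2j}(H)\to D_{2j}(H)$ has cokernel $\mathfrak{L}_{j+1}(H)\otimes\mathbb{Z}/2\neq 0$. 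The classes you miss are the elements $\eta_{2j}\bigl(\tfrac{1}{2}\,\mathrm{tr}(u)\odot\mathrm{tr}(u)\bigr)$ with $u\in\mathfrak{L}_{j+1}(H)$: a clasper whose shape is the symmetric tree $\mathrm{tr}(u)\odot\mathrm{tr}(u)$ realizes \emph{twice} such an element, not the element itself. So your construction only shows that the image of $\tau_k$ contains the subgroup generated by honest tree classes, which is a proper subgroup of $D_k(H)$ in every even degree; since surjectivity of $\tau_k$ is an integral statement, the rational isomorphism $\eta_k$ cannot repair this.

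To close the gap you must realize the half-symmetric classes by a different geometric device. This is exactly the subtlety handled in Section \ref{section4.4} of the paper (for Proposition \ref{propdif}), where Cochran's antidifferentiation starting from a \emph{Whitehead link} produces string links whose Milnor invariant is $\pm\eta_{2j}(\tfrac{1}{2}\mathrm{tr}(u)\odot\mathrm{tr}(u))$. The proof of the theorem that the paper actually recalls (Habegger's) goes through the Milnor--Johnson correspondence and the integral surjectivity of the Milnor maps $\mu_{k+1}$ due to Habegger--Lin, while the original Garoufalidis--Levine proof uses obstruction theory and surgery; either route confronts the even-degree $\mathbb{Z}/2$ discrepancy that your clasper argument, as written, does not.
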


Their proof uses obstruction theory and surgery techniques. N. Habegger gave in \cite{habegger2000milnor} a different proof of this theorem based on the surjectivity of Milnor invariants. We shall recall his proof in the next subsection, since it will be useful to us later.

%_____________________________________________________________________________________________________
%---------------------------------------
%---------------------------------------
%---------------------------------------
%---------------------------------------
\subsection{Milnor invariants and the Milnor-Johnson correspondence}\label{MJcorrespondence}
In this subsection we recall the Milnor invariants for string links and the Milnor-Johnson correspondence, which  relates the Johnson homomorphisms with the Milnor invariants. We refer to \cite{MR1026062,MR1620841}  for more details about Milnor invariants and to \cite{habegger2000milnor,MR2403806} for  more details about the Milnor-Johnson correspondence.

\subsubsection{String links and Milnor invariants}
 We start by introducing the definition of a string link in a homology cube. Denote by $D_l$ the surface $\Sigma_{0,1}$ together with  $l$ fixed  different points $p_1,\ldots,p_l$   distributed uniformly along the horizontal axis $\{(x,0)\ |\ x\in[-1,1]\}$, see Figure \ref{figura4.1}($a$). A \emph{string link on} $l$ \emph{strands} is an equivalence class of  pairs $(B,\sigma)$, where $B=(B,b)$  is a homology cube and $\sigma=(\sigma^{1},\ldots,\sigma^l):[-1,1]^l\rightarrow B$ is an oriented framed embedding such that $\sigma^i(\pm 1)=b(p_i,\pm 1)$, see Figure \ref{figura4.1}($b$).  Two pairs $(B,\sigma)$ and $(B',\sigma')$ are \emph{equivalent} if there exists an equivalence of homology cobordisms sending $\sigma$ to $\sigma'$.

The \emph{linking matrix} of a string link $(B,\sigma)$ on $l$ strands is the matrix, with rows and columns indexed by the components of $\sigma$, defined by
$$\text{Lk}_B(\sigma):= \text{Lk}_{\hat{B}}(\hat{\sigma}),$$
where $\hat{B}$ is the homology sphere $B\cup_b(\mathbb{S}^3\setminus \left[-1,1\right]^3)$ and $\hat{\sigma}$ is the \emph{braid closure} of $\sigma$, see Figure \ref{figura4.1}($c$).

\begin{figure}[ht!]
										\centering
                        \includegraphics[scale=0.58]{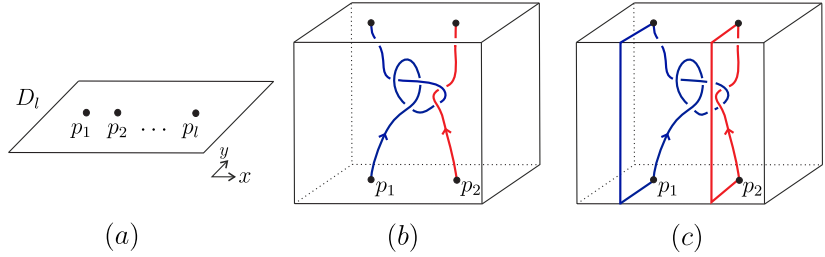}
												\caption{($a$) $D_l$, ($b$) a string link $\sigma$ on $2$ strands in $[-1,1]^2\times[-1,1]$ and ($c$) braid closure of $\sigma$.} 
												\label{figura4.1}
\end{figure}

By using the composition of homology cobordisms we can compose string links on $l$ strands. The equivalence class of $([-1,1]^3, \text{Id}_l)$, where $\text{Id}_l$ is the trivial string link, is the identity for this composition. Denote this monoid by $\mathcal{S}_l$.

We now turn to the definition of the Milnor invariants. Let $N(\{p_1,\ldots,p_l\})$ be a tubular neighborhood of the fixed points in $D_l$. Let  $D_l^o$ denote $D_l\setminus\text{int}(N(p_1,\ldots, p_l))$ and denote by $F_l$ the fundamental group $\pi_1(D_l^o, *)$ where $*\in\partial D_l$. We identify $F_l$ with the free group on $\{u_1,\ldots,u_l\}$, where $u_i$ is the homotopy class of a loop encircling the $i$-th hole of $D_g^o$ in the counterclockwise sense. Let $(B,\sigma)$ be a string link on $l$ strands. Set $S:=B\setminus \text{int}(N(\sigma))$, where $N(\sigma)$ is a tubular neighborhood of $\sigma$. The homeomorphism $b:\left[-1,1\right]^3\rightarrow\partial B$ and the framing of $\sigma$ determine an orientation-preserving homeomorphism $s:\partial(D_l^o\times\left[-1,1\right])\rightarrow\partial S$. Denote by $s_{\pm}:D_l^o\times\{\pm1\}\rightarrow\partial S$ the top and bottom restrictions of $s$. Since $B$ is a homology cube, the induced maps in homology $s_{\pm,*}:H_*(D_l^o;\mathbb{Z})\rightarrow H_*(S;\mathbb{Z})$ are isomorphisms. It follows from Stallings' theorem \cite[Theorem 3.4]{MR0175956} that $s_{\pm,*}$ induce isomorphisms on the nilpotent quotients of the fundamental groups. Thus we can define for every positive integer $k$, the $k$-th \emph{Artin representation} as the monoid homomorphism
\begin{equation}\label{ecuacion4.4}%%%%%%%%%%%%%%%%%%%%%%%%%%%%%%%%%%%%%%%%%%%%%
A_k: \mathcal{S}_l\longrightarrow \text{Aut}\left(\frac{F_l}{\Gamma_{k+1}F_l} \right),
\end{equation}
that sends $(B,\sigma)$ to the automorphism $s_{-,*}^{-1}\circ s_{+,*}$. The \emph{Milnor filtration} of $\mathcal{S}_l$ is the descending chain of submonoids  
$$\mathcal{S}_l=\mathcal{S}_l[1]\supseteq\mathcal{S}_l[2]\supseteq\cdots\supseteq\mathcal{S}_l[k]\supseteq\mathcal{S}_l[k+1]\supseteq\cdots$$
where $\mathcal{S}_l[k]:=\text{ker} (A_k)$. Notice that  $\mathcal{S}_l[2]$ is the submonoid of string links with trivial linking matrix. 

Let $(B,\sigma)\in \mathcal{S}_l[k]$ and let $\lambda_i$ be the $i$-th longitude determined by the framing of the component $\sigma^i$. Since $(B,\sigma)\in \mathcal{S}_l[k]$, the homotopy class of the loop determined by $\lambda_i$ becomes trivial in $\pi_1(S)/\Gamma_{k}\pi_1(S)$. Therefore we can define the monoid homomorphism
$$ \mu_k: \mathcal{S}_l[k]\longrightarrow \frac{F_l}{\Gamma_2 F_l}\otimes\frac{\Gamma_k F_l}{\Gamma_{k+1}F_l}$$
by the formula
\begin{equation}\label{ecuacion4.5}%%%%%%%%%%%%%%%%%%%%%%%%%%%%%%%%%%%%%%%%%%%%%
\mu_k(B,\sigma)=\sum_{i=1}^{l} u_i\otimes s_{-,*}^{-1}(\lambda_i).
\end{equation}

Let us identify $F_l/\Gamma_2 F_l$ with $\widetilde{H}:=H_1(D_l^o;\mathbb{Z})$ and $(\Gamma_{k} F_l)/(\Gamma_{k+1} F_l)$ with  the $k$-th term $\mathfrak{L}_{k}(\widetilde{H})$ of the free Lie algebra generated by $\widetilde{H}$. The fact that  the Artin representation fixes the homotopy class of $\partial D_l$ implies that $\mu_k$ takes values in the kernel $D_{k-1}(\widetilde{H})$ of the Lie bracket   $\left[\ ,\ \right]: \widetilde{H}\otimes \mathfrak{L}_{k}(\widetilde{H})\rightarrow\mathfrak{L}_{k+1}(\widetilde{H})$. From the above discussion, for all $k\geq 2$ we can write
\begin{equation}\label{ecuacion4.6}%%%%%%%%%%%%%%%%%%%%%%%%%%%%%%%%%%%%%%%%%%%%%
\mu_k: \mathcal{S}_l[k]\longrightarrow D_{k-1}(\widetilde{H}).
\end{equation}

The monoid homomorphism $\mu_k$ is called the $k$-th \emph{Milnor map}. Notice that $\text{ker}(\mu_k)=\mathcal{S}_l[k+1]$.  In \cite[Section 1]{MR1620841} N. Habegger and X. Lin proved   that  for all $k\geq1$ the $k$-th  Milnor map $\mu_k$ is surjective. The idea of their proof was adapted from the work of  K. Orr  in  \cite{MR974908}, where he studied which Milnor invariants are realizable. There is a more geometric approach to the realizability of Milnor invariants developed by  T. Cochran in \cite{MR1042041, MR1055569}, which we will need and  sketch briefly in  subsection \ref{section4.4}.

\subsubsection{The Milnor-Johnson correspondence}
In \cite{habegger2000milnor}, N. Habegger defined a bijection between homology cylinders and string links with trivial linking matrix. We follow the construction in \cite{MR2403806} which  can be described schematically as follows:
\begin{equation}\label{procedure}
\text{homology cylinder } \rightsquigarrow \text{bottom-top tangle }  \rightsquigarrow    \text{string  link.}
\end{equation}

More precisely, let $(M,m)$ be a homology cylinder over $\Sigma_{g,1}$ and consider its bottom-top tangle presentation. Next, from a bottom-top tangle of type $(g,g)$ we can obtain a string on $2g$ strands by the method illustrated in Figure \ref{figura4.8}. 
\begin{figure}[ht!] 
										\centering
                        \includegraphics[scale=0.7]{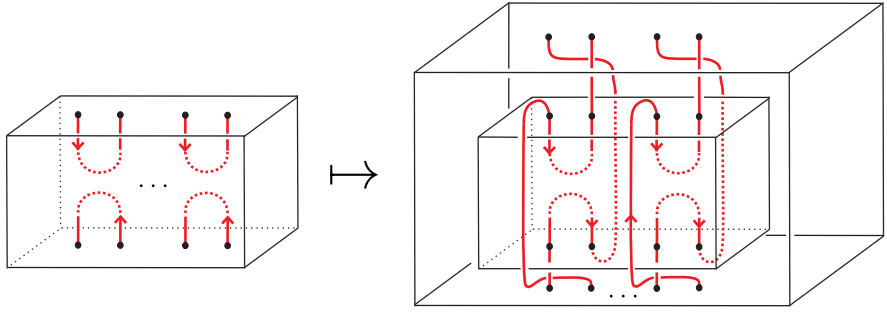}
												\caption{From  bottom-top tangles to string links.}
												\label{figura4.8}
\end{figure}

In this way we transform a homology cylinder $(M,m)\in\mathcal{IC}_{g,1}$ into a string link $\text{MJ}(M)\in\mathcal{S}_{2g}$. N. Habegger proved in \cite{habegger2000milnor} that $\text{MJ}$ defines a bijection between $\mathcal{IC}_{g,1}$ and the submonoid $\mathcal{S}_{2g}[2]$ of string links with trivial linking matrix. Moreover, for all $k\geq1$ the following diagram is commutative (see \cite[Claim 8.16]{MR2403806}).
\begin{equation}\label{ecuacion4.9}%%%%%%%%%%%%%%%%%%%%%%%%%%%%%%%%%%%%%%%%%%%%%
\xymatrix{ J_k\mathcal{C}_{g,1}\ar[r]^{\text{MJ}\ \ }_{\cong\ \ }\ar[d]_{\tau_{k}} & \mathcal{S}_{2g}[k+1] \ar[d]^{\mu_{k+1}} \\
						D_k(H)\ar[r]^{\cong\ \ } & D_k(\widetilde{H}),}
\end{equation}
where the bottom isomorphism is induced by the identification $\pi\cong F_{2g}$ described as follows. Consider  a free basis $\{\alpha_1,\ldots,\alpha_g,\beta_1,\ldots,\beta_g\}$ of $\pi$ induced by basing at $*$ the system of meridians and parallels in Figure \ref{figura3.0}. Identify  $\alpha_i$ with $u^{-1}_{2i-1}$ and $\beta_i$ with $u_{2i}$. 

In this way, from the surjectivity of $\mu_{k+1}$ and diagram (\ref{ecuacion4.9}),  it follows that $\tau_k:J_k\mathcal{C}\rightarrow D_k(H)$ is surjective. This is the proof of Theorem \ref{thmgl} by N. Habegger \cite{habegger2000milnor}. 

%_____________________________________________________________________________________________________
%---------------------------------------
%---------------------------------------
%---------------------------------------
%---------------------------------------
\subsection{Diagrammatic version of the Johnson homomorphisms}\label{subsection2.5}

In order to relate the Kontsevich integral with the Milnor invariants, N. Habegger and G. Masbaum gave in \cite{MR1783857} a diagrammatic version of the Milnor map. This was also  done for Johnson homomorphisms by S. Garoufalidis and J. Levine  in \cite{MR2131016}. Let us recall this description.

By a \emph{tree-like Jacobi diagram} we mean a finite contractible unitrivalent graph such that the trivalent vertices are \emph{oriented}, that is, each set of incident edges to a trivalent vertex is endowed with a cyclic order. The \emph{internal degree} of such a diagram is the number of trivalent vertices; we denote it by i-deg.
Let $C$ be a finite set. We say that a tree-like Jacobi diagram $T$ is   $C$-\emph{colored} if there is a map from the set of univalent vertices (\emph{legs}) of $T$ to the free abelian group generated by $C$. We use dashed lines to represent tree-like Jacobi diagrams and, when we draw them, we assume that the orientation of trivalent vertices is counterclockwise.

Consider the  abelian group
$$\mathcal{T}(C):=\frac{\mathbb{Z}\{C\text{-colored tree-like Jacobi diagrams}\}}{\text{AS, IHX, $\mathbb{Z}$-multilinearity}},$$
\noindent where the relations AS, IHX are local  and the multilinearity relation applies to the $C$-colored legs, see Figure \ref{figura4.13}.
\begin{figure}[ht!] 
										\centering
                        \includegraphics[scale=0.72]{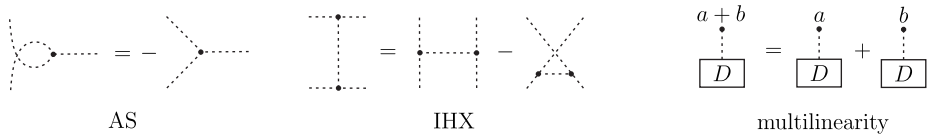}
												\caption{Relations in $\mathcal{T}(C)$. Here $a,b\in \mathbb{Z}\cdot C$.}
												\label{figura4.13}
\end{figure}

 \noindent Notice that $\mathcal{T}(C)$ is graded by the internal degree: for $k\geq 1$, $\mathcal{T}_k(C)$ is the subspace of $\mathcal{T}(C)$ generated by tree-like Jacobi diagrams of i-deg $=k$. We can define $\mathcal{T}(G)$ for any finitely generated free abelian group $G$ by  $\mathcal{T}(G)=\mathcal{T}(C)$ where $C$ is any set of free generators of $G$. 

Consider the abelian group $H=H_1(\Sigma_{g,1};\mathbb{Z})$. We have seen  that the $k$-th Johnson homomorphism takes values in $D_k(H)\subseteq H\otimes\mathfrak{L}_{k+1}(H)$. Observe that a \emph{rooted} tree of i-deg $=k$ with $H$-colored legs determines a Lie commutator in $\mathfrak{L}_{k+1}(H)$. Let us consider the map 
\begin{equation}\label{ecuacion4.15tree}%%%%%%%%%%%%%%%%%%%%%%%%%%%%%%%%%%%%%%%%%%%%%
\eta^{\mathbb{Z}}_k:\mathcal{T}_k(H)\longrightarrow D_k(H),\ \ \ \ T\longmapsto\sum_v \text{color}(v)\otimes (T \text{ rooted at } v),
\end{equation}
where the sum ranges over the set of univalent vertices of $T$, and the rooted trees are identified with Lie commutators. For instance,

\bigskip

\centerline{\includegraphics[scale=0.85]{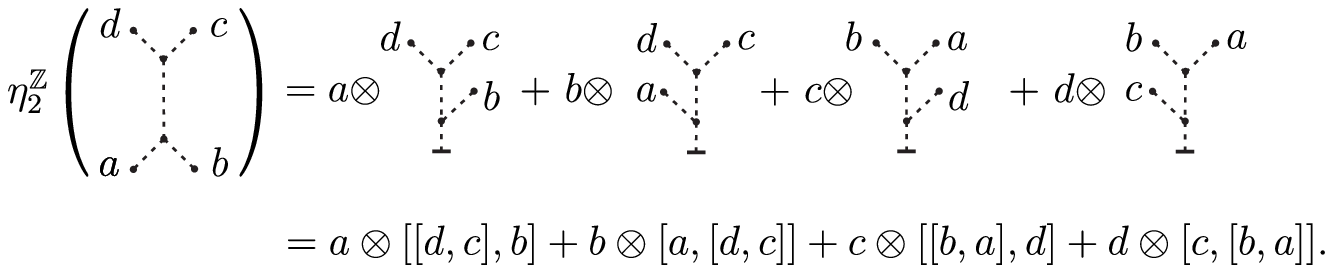}}

\medskip

Consider the rational version of  $\eta^{\mathbb{Z}}_k$: 
\begin{equation}\label{ecuacion4.15treerat}%%%%%%%%%%%%%%%%%%%%%%%%%%%%%%%%%%%%%%%%%%%%%
\eta_k:\mathcal{T}_k(H)\otimes\mathbb{Q}\longrightarrow D_k(H)\otimes\mathbb{Q}.
\end{equation}
This map  is an isomorphism, see \cite[Corollary 3.2]{MR1943338}.  In this way, for $M\in J_k\mathcal{C}_{g,1}$ we define the \emph{diagrammatic version} of the $k$-th Johnson homomorphism by
$$\eta_k^{-1}(\tau_k(M))\in \mathcal{T}_k(H)\otimes\mathbb{Q}.$$

%_____________________________________________________________________________________________________
%---------------------------------------
%---------------------------------------
%---------------------------------------
%---------------------------------------
\section{Lagrangian version of the Johnson homomorphisms}\label{seccion3}%section 3

In \cite{MR1823501,MR2265877}, J. Levine introduced a different filtration of the mapping class group by considering a handlebody bounded by $\Sigma$. The induced inclusion determines a  Lagrangian subgroup of the first homology group of the surface. This Lagrangian subgroup, together with the lower central series of the fundamental group of the handlebody, allow to define the new filtration. 

\subsection{Preliminaries}\label{seccion3.1} Let $V$ (or $V_g$ if there is ambiguity) be a handlebody of genus $g$. Fix a disk $D$ on the boundary of $V$ such that $\partial V=\Sigma\cup D$, where $D$ and $\Sigma$ are glued along their boundaries. Denote by  $\iota$ the inclusion of $\Sigma$ into $\partial V\subseteq V$, see Figure \ref{figura17}. Set $H':=H_1(V;\mathbb{Z})$ and $\pi':=\pi_1(V,\iota(*))$. Denote by $A$ the kernel of the induced map $\iota_*:H\rightarrow H'$ in homology and by  $\mathbb{A}$ the kernel of the induced map $\iota_{\#}:\pi\rightarrow\pi'$ in homotopy. Notice that $A$ is a \emph{Lagrangian}  subgroup of $H$  with respect to the intersection form $\omega:H\otimes H\rightarrow\mathbb{Z}$.

Let us denote by $\text{ab}:\pi\rightarrow H$ and $\text{ab}':\pi'\rightarrow H'$ the abelianization maps. The equality $\iota_*\circ \text{ab}=\text{ab}'\circ\iota_{\#}$ implies that  $\text{ab}^{-1}(A)=\mathbb{A}\cdot\Gamma_2\pi$. Thus, we have 
\begin{equation*}%%%%%%%%%%%%%%%%%%%%%%%%%%%%%%%%%%%%%%%%%%%%%
A \mathrel{\mathop{\longleftarrow}^{\mathrm{\cong}}_{\text{ab}}} (\mathbb{A}\cdot\Gamma_2\pi)/\Gamma_2\pi\cong \mathbb{A}/(\Gamma_2\pi\cap\mathbb{A}).
\end{equation*}
By Hopf's formula, we obtain
$(\Gamma_2\pi\cap\mathbb{A})/[\pi,\mathbb{A}]\cong H_2(\pi/\mathbb{A})\cong H_2(\pi'),$
\noindent and since $\pi'$ is a free group, $H_2(\pi')=0$. Hence $\Gamma_2\pi\cap\mathbb{A}=[\pi,\mathbb{A}]$. To sum up, we have the short exact sequence
\begin{equation}\label{ecuacion3.3}%%%%%%%%%%%%%%%%%%%%%%%%%%%%%%%%%%%%%%%%%%%%%
1\longrightarrow [\pi,\mathbb{A}]\longrightarrow \mathbb{A}\stackrel{\text{ab}}{\longrightarrow}A\longrightarrow 1.
\end{equation}

Finally, let us recall the symplectic representation. Since the elements in $\mathcal{M}$ are orientation-preserving, their induced maps on $H$ preserve the intersection form. Therefore we have a map
$$\mathcal{M}\longrightarrow\text{Sp}(H)=\{f\in\text{Aut}(H)\ \ |\ \ \forall x,y\in H,\ \omega(f(x),f(y))=\omega(x,y) \},$$
that sends $h\in\mathcal{M}$ to the induced map $h_*$ on $H$. We will often need to consider bases to perform some computations. On this purpose, we fix  a free basis $\{\alpha_1,\ldots,\alpha_g,\beta_1,\ldots,\beta_g\}$ induced by basing at $*$ the fixed system of meridians and parallels in Figure \ref{figura3.0}. We also fix the   symplectic basis $\{a_1,\ldots,a_g, b_1, \ldots, b_g\}$ of $H$, induced by $\{\alpha_1,\ldots,\alpha_g,\beta_1,\ldots,\beta_g\}$. Here we assume that the curves $\iota(\alpha_i)$'s bound disks in $V$, see Figure \ref{figura17}. This way,  $\{a_1,\ldots,a_g\}$ is a basis for $A$,  and $\{b_1+A,\ldots, b_g+A\}$ is a basis for $H/A$. 
\begin{figure}[ht!]
										\centering
                        \includegraphics[scale=0.75]{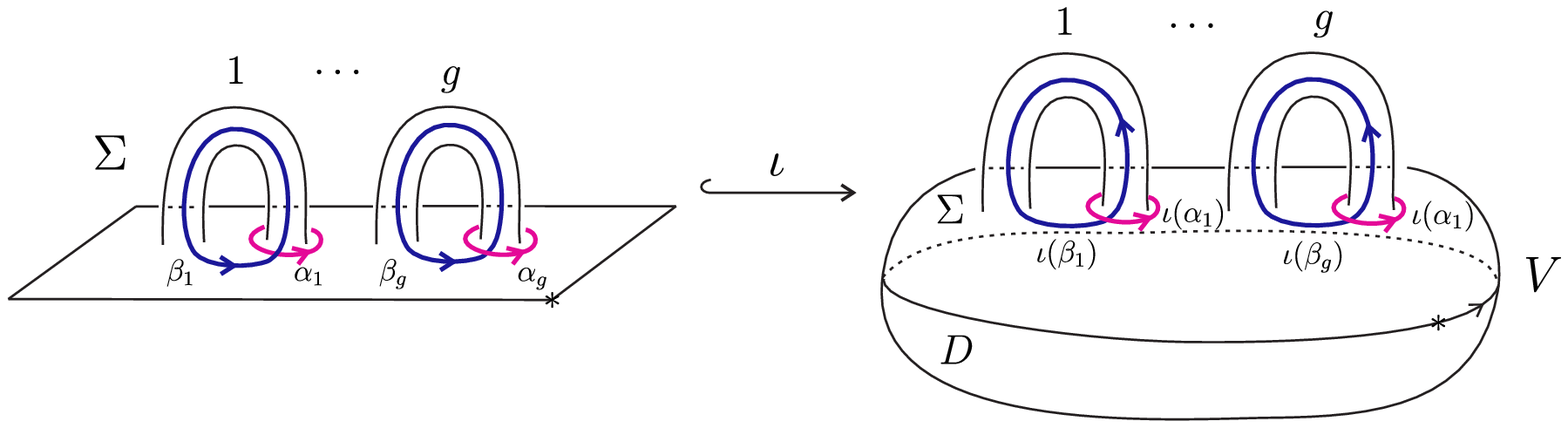}
												\caption{The inclusion $\Sigma\stackrel{\iota}{\hooklongrightarrow}V$.}
												\label{figura17}
\end{figure}

We use the above symplectic basis of $H$ to identify $\text{Sp}(H)$ with the group  $\text{Sp}(2g,\mathbb{Z})$ of $(2g)\times(2g)$ matrices $\Lambda$ with integer entries such that  ${\Lambda}^TJ\Lambda=J$, where $J$ is the standard invertible skew-symmetric matrix $\left( \begin{smallmatrix} 0&\text{Id}_g\\ -\text{Id}_g&0 \end{smallmatrix} \right)$.  Denote this identification by $\psi:\text{Sp}(H)\rightarrow \text{Sp}(2g,\mathbb{Z})$.

%_____________________________________________________________________________________________________
%---------------------------------------
%---------------------------------------
%---------------------------------------
%---------------------------------------
\subsection{The Lagrangian mapping class group}\label{seccion3.2}

Let us define two important subgroups of the mapping class group associated to the Lagrangian subgroup $A$. Set
\begin{equation}\label{ecuacion3.4}%%%%%%%%%%%%%%%%%%%%%%%%%%%%%%%%%%%%%%%%%%%%% 
\mathcal{L}:=\{f\in\mathcal{M}\ \ |\ \ f_{*}(A)\subseteq A\}  \text{\ \ \ \ \ \ \ and \ \ \ \ \ \ }  \mathcal{IL}:=\{f\in\mathcal{L}\ \ |\ \ {f_*}|_A=\text{Id}_A\}.
\end{equation}

\noindent The subgroup $\mathcal{L}$ is called   the \emph{Lagrangian mapping class group} of $\Sigma$,  and $\mathcal{IL}$ is called the  \emph{strongly Lagrangian mapping class group} of $\Sigma$. 

\begin{example} The Torelli group $\mathcal{I}$ is contained in $\mathcal{IL}$. Also, any Dehn twist along a meridian $\alpha_i$ (see Figure \ref{figura3.0}) belongs to $\mathcal{IL}$. This shows that $\mathcal{I}$ is strictly contained in $\mathcal{IL}$.
\end{example}

\begin{example} Let $g\geq2$. Consider the orientation-preserving homeomorphism $h:\Sigma \rightarrow \Sigma$ that interchanges the first and second handle in Figure \ref{figura3.0}. This homeomorphism   can be extended to the handlebody $V$ (this extension is known in the literature as \emph{interchanging two knobs}, see \cite[Section 3]{MR0433433} for a detailed description). We have  that $h$ belongs to $\mathcal{L}$ but not to $\mathcal{IL}$, hence $\mathcal{IL}$ is strictly contained in $\mathcal{L}$.  
\end{example}

Let us give some equivalent formulations of the strongly Lagrangian mapping class group. If $h$ belongs to $\mathcal{L}$, then it induces a well defined isomorphism $\hat{h}_*:H/A\rightarrow H/A$ by sending $x+A$ to $h_{*}(x)+A$. By means of  the isomorphism $H/A\stackrel{\iota_*}{\longrightarrow}H'$, we have an isomorphism $h'_*:H'\rightarrow H'$ defined by $h'_*:=\iota_*\circ \hat{h}_*\circ \iota_*^{-1}$.

\begin{lemma}\label{lema3.1} Let $h$ be an element in $\mathcal{L}$. The following assertions are equivalent:
\begin{enumerate}
\item[\emph{(\emph{i})}] $h$ belongs to $\mathcal{IL}$.
\item[\emph{(\emph{ii})}] The induced isomorphism $h'_*:H'\rightarrow H'$ is the identity.
\item[\emph{(\emph{iii})}] $\iota_*\circ h_*=\iota_*$.
\end{enumerate}
\end{lemma}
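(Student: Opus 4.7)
My plan is to prove the implications (i) $\Rightarrow$ (ii) $\Rightarrow$ (iii) $\Rightarrow$ (i). Two ingredients will do all the work: the fact that the inclusion $\iota$ induces a short exact sequence
\[
0\longrightarrow A\longrightarrow H\stackrel{\iota_*}{\longrightarrow}H'\longrightarrow 0,
\]
so that $\iota_*$ factors as $H\twoheadrightarrow H/A\xrightarrow{\ \bar{\iota}_*\ } H'$ with $\bar{\iota}_*$ an isomorphism, and the fact that $A$ is \emph{Lagrangian}, i.e.\ $A^{\perp_\omega}=A$ inside $H$. Together with $h_*\in\mathrm{Sp}(H)$ (from the definition of $\mathcal{M}$), this Lagrangian property is what bridges the condition on $A$ and the condition on $H/A$.

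The equivalence (ii) $\Leftrightarrow$ (iii) is a direct diagram chase. By construction $\hat{h}_*$ is the map on $H/A$ obtained from $h_*$, so letting $q\colon H\to H/A$ denote the projection one has $\hat{h}_*\circ q=q\circ h_*$ and $\iota_*=\bar{\iota}_*\circ q$. Then $h'_*=\mathrm{Id}_{H'}$ is equivalent to $\hat{h}_*=\mathrm{Id}_{H/A}$ (since $\bar{\iota}_*$ is an isomorphism), which is in turn equivalent to $q\circ h_*=q$, i.e.\ to $\iota_*\circ h_*=\iota_*$.

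For (i) $\Rightarrow$ (ii) and, conversely, (iii) $\Rightarrow$ (i) one must use that $h_*$ preserves the intersection form $\omega$. Suppose (i) holds. For every $a\in A$ and $x\in H$,
\[
\omega(a,h_*(x))=\omega(h_*(a),h_*(x))=\omega(a,x),
\]
so $h_*(x)-x\in A^{\perp_\omega}=A$ for all $x\in H$. This is exactly the statement $\hat{h}_*=\mathrm{Id}_{H/A}$, giving (ii). Conversely, assume (iii), so that $\hat{h}_*=\mathrm{Id}_{H/A}$. For any $a\in A$ and $b\in H$ write $h_*(b)=b+a_b$ with $a_b\in A$; then, since $h_*(a)\in A$ and $\omega$ vanishes on $A\otimes A$,
\[
\omega(h_*(a),b)=\omega(h_*(a),b+a_b)=\omega(h_*(a),h_*(b))=\omega(a,b).
\]
Non-degeneracy of $\omega$ yields $h_*(a)=a$, which is (i).

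The only subtle step, and the one I expect to be the crux, is the last displayed calculation: it is where the symplectic and Lagrangian assumptions are used jointly to promote the triviality of $\hat{h}_*$ on the quotient $H/A$ to the triviality of $h_*$ on the subgroup $A$. Everything else is just unwinding definitions and the short exact sequence for $\iota_*$.
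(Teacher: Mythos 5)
Your proof is correct. The only point to check carefully is the Lagrangian property $A^{\perp_\omega}=A$ and the non-degeneracy of $\omega$ on $H=H_1(\Sigma_{g,1};\mathbb{Z})$, both of which hold here (the $a_i$ span $A$, and $\omega$ is the standard unimodular symplectic form), so all three implications go through: (i)$\Rightarrow$(ii) uses $h_*(a)=a$ and invariance of $\omega$ to get $h_*(x)-x\in A^{\perp_\omega}=A$; (iii)$\Rightarrow$(i) uses $h_*(A)\subseteq A$, isotropy of $A$ and non-degeneracy of $\omega$; and (ii)$\Leftrightarrow$(iii) is the diagram chase through $\iota_*=\bar{\iota}_*\circ q$. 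Your route differs from the paper's in presentation: the paper fixes the symplectic basis, writes $\psi(h_*)=\left(\begin{smallmatrix} P & Q\\ 0 & R\end{smallmatrix}\right)$, and extracts from the symplectic condition the identity $P^{T}R=\mathrm{Id}_g$, which immediately couples $h_*|_A$ (matrix $P$) with $\hat{h}_*$ (matrix $R$); you instead argue coordinate-free, letting the perfect pairing that $\omega$ induces between $A$ and $H/A$ do the same coupling. The underlying mechanism is identical (symplectic duality between the Lagrangian $A$ and the quotient $H/A$), but your version avoids choosing a basis and makes visible exactly which hypotheses (isotropy of $A$, $A^{\perp_\omega}=A$, non-degeneracy) enter at each step, while the paper's matrix computation is shorter once the identification $\psi:\mathrm{Sp}(H)\rightarrow\mathrm{Sp}(2g,\mathbb{Z})$ is in place and is reused later in the paper (e.g.\ in the proof of Theorem \ref{teorema1}).
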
 

\begin{proof}
We use the symplectic basis $\{a_1,\ldots,a_g, b_1, \ldots, b_g\}$ and the  identification  $\psi: \text{Sp}(H)\rightarrow \text{Sp}(2g,\mathbb{Z})$ described in subsection \ref{seccion3.1}. Let  $h\in\mathcal{L}$, then there exist integers $\lambda_{kj}$, $\delta_{kj}$ and $\epsilon_{kj}$   such that for  $1\leq j \leq g$,  
\begin{equation}\label{ecuacionlema}
h_{*}(a_j)=\sum_{k=1}^g \lambda_{kj}a_k  \text{\ \ \ \ \ \ \ and \ \ \ \ \ \ }  h_{*}(b_j)=\sum_{k=1}^g \delta_{kj}a_k + \sum_{k=1}^g \epsilon_{kj}b_k.
\end{equation}

\noindent Hence  $\psi(h_{*})=\left( \begin{smallmatrix} P & Q\\ 0 & R \end{smallmatrix} \right)$, where $P=(\lambda_{ij})$, $Q=(\delta_{ij})$ and $R=(\epsilon_{ij})$. The symplectic condition on $h_{*}$ becomes
\begin{equation}\label{ecuacion3.5}
P^TR=\text{Id}_g  \text{\ \ \ \ \ \ \ and \ \ \ \ \ \ } Q^TR=R^TQ.
\end{equation}
\noindent Recall that $\{a_1,\ldots,a_g\}$ is a basis for $A$, and $\{b_1+A,\ldots, b_g+A\}$ is a basis for $H/A$. The matrices of $h_{*}|_A:A\rightarrow A$ and $\hat{h}_*:H/A\rightarrow H/A$ in these bases are $P$ and $R$ respectively. The first condition in  (\ref{ecuacion3.5}) implies that $P=\text{Id}_g$ if and only if $R=\text{Id}_g$. Therefore we have the equivalence ($i$)$\Leftrightarrow$($ii$). Now, from the definition of $h'_*$  it follows that  $h'_*=\text{Id}_{H'}$ if and only if $\iota_*\circ \hat{h}_*=\iota_*$ on $H/A$ if and only if $\iota_*\circ h_*=\iota_*$ on $H$. Hence we have ($ii$)$\Leftrightarrow$($iii$).
\end{proof}

We now describe the filtration introduced by J. Levine in \cite{MR1823501,MR2265877}.

\begin{definition}\label{deflagfil} The \emph{Lagrangian filtration} or \emph{Johnson-Levine filtration}  $\{J_k^L\mathcal{M}\}_{k\geq1}$ of  $\mathcal{M}$ is defined as
$$J_k^L\mathcal{M}:=\{h\in\mathcal{M}\ \ |\ \ \iota_{\#}h_{\#}(\mathbb{A})\subseteq\Gamma_{k+1}\pi',\  {h_*}|_A=\text{\emph{Id}}_A\}.$$
\end{definition}

Notice that the condition $\iota_{\#}h_{\#}(\mathbb{A})\subseteq\Gamma_{k+1}\pi'$  implies that $h_{*}(A)\subseteq A$. Besides, J. Levine also defined and studied in \cite{MR1823501,MR2265877} a version of the Johnson homomorphisms for the above filtration.  In order to define them, let us first  identify $H/A$ with $A^*$ by sending $x+A\in H/A$ to  $\omega(x,\cdot)\in A^*$. We also identify $H/A$ with $H'$ via  the isomorphism $\iota_*$.

\begin{proposition}(J. Levine) For every non-negative integer $k$, the $k$-th term $J_k^L\mathcal{M}$ of the Johnson-Levine filtration is a subgroup of $\mathcal{M}$. Let 
\begin{equation}\label{ecuacion3.9}%%%%%%%%%%%%%%%%%%%%%%%%%%%%%%%%%%%%%%%%%%%%%
\tau_k^L:J_k^L\mathcal{M}\rightarrow\text{\emph{Hom}}(A,\Gamma_{k+1}\pi'/\Gamma_{k+2}\pi')\cong A^*\otimes \Gamma_{k+1}\pi'/\Gamma_{k+2}\pi'\cong H'\otimes \mathfrak{L}_{k+1}(H'),
\end{equation}

\noindent be the map that sends $h\in J_k^L\mathcal{M}$ to the map $a\in A\mapsto \{\iota_{\#}h_{\#}(\alpha)\}_{k+2}$, where $\alpha\in \mathbb{A}$ is such that $\text{\emph{ab}}(\alpha)=a$. Then $\tau_k^L$ is a group homomorphism which we shall call the $k$-th \emph{Johnson-Levine homomorphism}.
\end{proposition}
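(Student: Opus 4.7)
The proposition bundles three claims: $J_k^L\mathcal{M}$ is a subgroup of $\mathcal{M}$; $\tau_k^L(h)\colon A \to \Gamma_{k+1}\pi'/\Gamma_{k+2}\pi'$ is a well-defined group homomorphism for each $h$; and $h \mapsto \tau_k^L(h)$ is a group homomorphism. The unifying observation is that since $\iota_{\#}\colon \pi \twoheadrightarrow \pi'$ is surjective with kernel $\mathbb{A}$, we have
\[
\iota_{\#}^{-1}(\Gamma_{k+1}\pi') \;=\; \mathbb{A}\cdot\Gamma_{k+1}\pi \;=:\; \mathbb{A}_k,
\]
so the condition defining $J_k^L\mathcal{M}$ reads $h_{\#}(\mathbb{A}) \subseteq \mathbb{A}_k$. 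Since $h_{\#}$ always preserves $\Gamma_{k+1}\pi$, this implies $h_{\#}(\mathbb{A}_k) \subseteq \mathbb{A}_k$, so $h_{\#}$ descends to an endomorphism $\hat h$ of $\pi'/\Gamma_{k+1}\pi'$. By Lemma~\ref{lema3.1}, $\hat h$ restricts to the identity on the abelianization $H'$; a standard argument then shows that $\hat h$ is an automorphism of the finitely generated nilpotent group $\pi'/\Gamma_{k+1}\pi'$ (lifts of generators of $H'$ generate the whole group, so $\hat h$ is surjective, and surjective endomorphisms of finitely generated nilpotent groups are automorphisms).

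From these structural observations, the subgroup property and well-definedness follow cleanly. Closure under composition is immediate from $h_{\#}(\mathbb{A}_k) \subseteq \mathbb{A}_k$; for inverses, applying $\hat h$ to the class of $\iota_{\#}(h_{\#}^{-1}(\alpha))$ in $\pi'/\Gamma_{k+1}\pi'$ yields $\iota_{\#}(\alpha) = 1$, so injectivity of $\hat h$ forces $\iota_{\#}(h_{\#}^{-1}(\alpha)) \in \Gamma_{k+1}\pi'$. Well-definedness of $\tau_k^L(h)$ on $A \cong \mathbb{A}/[\pi, \mathbb{A}]$ (exact sequence (\ref{ecuacion3.3})) reduces to
\[
\iota_{\#}h_{\#}([\pi, \mathbb{A}]) \subseteq [\pi', \iota_{\#}h_{\#}(\mathbb{A})] \subseteq [\pi', \Gamma_{k+1}\pi'] \subseteq \Gamma_{k+2}\pi',
\]
while additivity in $a \in A$ is immediate from $\iota_{\#}h_{\#}$ being a group homomorphism into an abelian target.

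The substantive step is verifying that $\tau_k^L$ is a homomorphism. Given $h_1, h_2 \in J_k^L\mathcal{M}$ and $\alpha \in \mathbb{A}$ with $\text{ab}(\alpha) = a$, set $\beta := \alpha^{-1}h_{2\#}(\alpha)$. Then $\beta \in \mathbb{A}_k$, and since $h_2 \in \mathcal{IL}$ forces $\text{ab}(\beta) = h_{2*}(a) - a = 0$, in fact $\beta \in \mathbb{A}_k \cap \Gamma_2\pi = [\pi, \mathbb{A}]\cdot\Gamma_{k+1}\pi$, so we may decompose $\beta = \alpha'\gamma$ with $\alpha' \in [\pi, \mathbb{A}]$ and $\gamma \in \Gamma_{k+1}\pi$. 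Applying $\iota_{\#}h_{1\#}$ to $h_{2\#}(\alpha) = \alpha\alpha'\gamma$ and reducing modulo $\Gamma_{k+2}\pi'$ in the abelian group $\Gamma_{k+1}\pi'/\Gamma_{k+2}\pi'$ produces three contributions. First, $\{\iota_{\#}h_{1\#}(\alpha)\}_{k+2} = \tau_k^L(h_1)(a)$ by definition. Second, $\{\iota_{\#}h_{1\#}(\alpha')\}_{k+2} = 0$, since every generator $[x, a_0]$ of $[\pi, \mathbb{A}]$ is sent to $[\iota_{\#}h_{1\#}(x), \iota_{\#}h_{1\#}(a_0)] \in [\pi', \Gamma_{k+1}\pi'] \subseteq \Gamma_{k+2}\pi'$, using that $h_1 \in J_k^L\mathcal{M}$. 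Third, $\{\iota_{\#}h_{1\#}(\gamma)\}_{k+2} = \{\iota_{\#}(\gamma)\}_{k+2} = \{\iota_{\#}(\beta)\}_{k+2} = \{\iota_{\#}h_{2\#}(\alpha)\}_{k+2} = \tau_k^L(h_2)(a)$, where the crucial first equality is the congruence below.

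The main obstacle is establishing
\[
\iota_{\#}h_{1\#}(\gamma) \;\equiv\; \iota_{\#}(\gamma) \pmod{\Gamma_{k+2}\pi'} \quad\text{for all } \gamma \in \Gamma_{k+1}\pi.
\]
I would prove it by passing to the associated graded Lie algebra $\mathfrak{L}(H)$ of the lower central series of $\pi$. Since $h_1 \in \mathcal{IL}$, the map $h_{1*} - \text{Id}_H\colon H \to H$ takes values in $A$; expanding a $(k{+}1)$-fold bracket in $\mathfrak{L}_{k+1}(H) \cong \Gamma_{k+1}\pi/\Gamma_{k+2}\pi$ by multilinearity of the Lie bracket shows that $h_{1*}(\gamma) - \gamma$ lies in the Lie ideal of $\mathfrak{L}(H)$ generated by $A$. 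This ideal is precisely the kernel of the surjection $\mathfrak{L}(H) \twoheadrightarrow \mathfrak{L}(H')$ induced by $\iota_*\colon H \twoheadrightarrow H'$, so the difference vanishes in $\mathfrak{L}_{k+1}(H') \cong \Gamma_{k+1}\pi'/\Gamma_{k+2}\pi'$, yielding the congruence and completing the proof that $\tau_k^L(h_1 h_2) = \tau_k^L(h_1) + \tau_k^L(h_2)$.
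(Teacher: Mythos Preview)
Your argument is correct and largely parallels the paper's, but the organization differs in one notable way. The paper proves that $J_k^L\mathcal{M}$ is a subgroup by induction: it checks $J_1^L\mathcal{M}=\mathcal{IL}$ is a subgroup, then for each $k$ shows $\tau_k^L$ is a homomorphism and concludes that $J_{k+1}^L\mathcal{M}=\ker\tau_k^L$ is a subgroup. You instead establish the subgroup property directly for every $k$, by observing that $h_{\#}$ preserves $\mathbb{A}_k=\iota_\#^{-1}(\Gamma_{k+1}\pi')$ and hence descends to an automorphism $\hat h$ of $\pi'/\Gamma_{k+1}\pi'$; closure under inverses then follows from injectivity of $\hat h$. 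This direct route is slightly more conceptual and avoids the inductive bootstrap, at the modest cost of invoking the Hopfian property of finitely generated nilpotent groups. For well-definedness and the homomorphism identity your argument is essentially identical to the paper's: both reduce to the congruence $\iota_\# h_\#(\gamma)\equiv\iota_\#(\gamma)\pmod{\Gamma_{k+2}\pi'}$ for $\gamma\in\Gamma_{k+1}\pi$, and both derive it from $\iota_*h_*=\iota_*$ (Lemma~\ref{lema3.1}(iii)), you via the Lie ideal generated by $A$ in $\mathfrak{L}(H)$, the paper via the commutator structure directly. Your decomposition $h_{2\#}(\alpha)=\alpha\alpha'\gamma$ with $\alpha'\in[\pi,\mathbb{A}]$ is a minor refinement of the paper's $\tilde h_\#(\alpha)=\beta y$ with $\beta\in\mathbb{A}$, and both lead to the same three-term computation.
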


For the sake of completeness let us see the proof.

\begin{proof}  The argument is by induction on $k$. From Definition \ref{deflagfil}, it follows that $J_1^L\mathcal{M}=\mathcal{IL}$ which is indeed a subgroup of $\mathcal{M}$. Now suppose that $J_k^L\mathcal{M}$ is a subgroup. Let us verify that $\tau_k^L$ is well defined and it is a group homomorphism. Let $h\in J_k^L\mathcal{M}$, $a\in A$ and $\alpha_1,\alpha_2\in \mathbb{A}$ such that $\text{ab}(\alpha_1)=\text{ab}(\alpha_2)=a$. The short exact sequence (\ref{ecuacion3.3}) implies $\alpha_1\alpha_2^{-1}=[x_1,y_1]\cdots[x_n,y_n]$ with $x_1,\ldots,x_n\in\pi$ and $y_1,\ldots,y_n\in\mathbb{A}$. Since for every $j$, we have that $[\iota_{\#}h_{\#}(x_j),\iota_{\#}h_{\#}(y_j)]$ is in $[\pi',\Gamma_{k+1}\pi']=\Gamma_{k+2}\pi'$, then $\iota_{\#}h_{\#}(\alpha_1\alpha_2^{-1})$ belongs to $\Gamma_{k+2}\pi'$, so $\tau_k^L(h)$ is well defined as a map from $H$ to $\Gamma_{k+1}\pi'/\Gamma_{k+2}\pi'$. 

Clearly $\tau_k^L(h)$ belongs to $\text{Hom}(A,\Gamma_{k+1}\pi'/\Gamma_{k+2}\pi')$. Let us see that $\tau_k^L$ is a group homomorphism. Let $h,\widetilde{h}\in J_k^L\mathcal{M}$, $a\in A$ and $\alpha\in\mathbb{A}$ with $\text{ab}(\alpha)=a$.
The splittable short exact sequence 
\begin{equation}\label{ecuacion3.10}%%%%%%%%%%%%%%%%%%%%%%%%%%%%%%%%%%%%%%%%%%%%%
1\longrightarrow \mathbb{A}\longrightarrow \pi\stackrel{\iota_{\#}}{\longrightarrow}\pi'\longrightarrow 1,
\end{equation}
\noindent and the fact that $\widetilde{h}\in J_k^L\mathcal{M}$, allow us to write $\widetilde{h}_{\#}(\alpha)=\beta y$ with  $\beta\in\mathbb{A}$ and $y\in\Gamma_{k+1}\pi$.  Notice that $a=\widetilde{h}_{*}(a)=\text{ab}(\widetilde{h}_{\#}(\alpha))=\text{ab}(\beta y)=\text{ab}(\beta)$.

On the other hand,  suppose  that $y$ is  a group commutator of length $k+1$, say in the elements $y_1,\ldots, y_{k+1}\in\pi$ (if $y$ is a product of such commutators, the reasoning is similar). Then $\iota_{\#}(h_{\#}(y))$ and $\iota_{\#}(y)$ are commutators of length $k+1$ in the elements  $\iota_{\#}(h_{\#}(y_1))$,$\ldots$, $\iota_{\#}(h_{\#}(y_{k+1}))$ and $\iota_{\#}(y_1),\ldots, \iota_{\#}(y_{k+1})$ respectively. Notice that $y$, $\iota_{\#}(y)$ and $\iota_{\#}(h_{\#}(y))$ have the same  \emph{commutator structure}, that is, they have the same bracketing structure.

Under the identification $\Gamma_{k+1}\pi'/\Gamma_{k+2}\pi'\cong \mathfrak{L}_{k+1}(H')$, the elements $\iota_{\#}(h_{\#}(y))\Gamma_{k+2}\pi'$ and $\iota_{\#}(y)\Gamma_{k+2}\pi'$ correspond to Lie commutators,  with the same structure as $y$, in the elements $\text{ab}'(\iota_{\#}(h_{\#}(y_1)))$,$\ldots$, $\text{ab}'(\iota_{\#}(h_{\#}(y_{k+1})))$ and $\text{ab}'(\iota_{\#}(y_1)),\ldots, \text{ab}'(\iota_{\#}(y_{k+1}))$ respectively. The identity $\iota_*\circ \text{ab}=\text{ab}'\circ \iota_{\#}$ and  Lemma \ref{lema3.1}($iii$)  imply that
$$\text{ab}'(\iota_{\#}h_{\#}(y_j))=\iota_{*}h_{*}(\text{ab}(y_j))=\iota_{*}(\text{ab}(y_j))=\text{ab}'(\iota_{\#}(y_j)),$$
\noindent thus $\iota_{\#}h_{\#}(y)\Gamma_{k+2}\pi'=\iota_{\#}(y)\Gamma_{k+2}\pi'$.  From the above discussion, it follows that 
\begin{equation}\label{ecuacion3.11}%%%%%%%%%%%%%%%%%%%%%%%%%%%%%%%%%%%%%%%%%%%%%
\begin{split}
\tau_k^L(h\circ\widetilde{h})(a) & = \iota_{\#}(h_{\#}(\widetilde{h}_{\#}(\alpha)))\Gamma_{k+2}\pi'\\
																 & = \iota_{\#}(h_{\#}(\beta))\Gamma_{k+2}\pi'+\iota_{\#}(h_{\#}(y))\Gamma_{k+2}\pi'\\
																 & = \tau_k^L(h)(\text{ab}(\beta))+\iota_{\#}(h_{\#}(y))\Gamma_{k+2}\pi'\\
																 & = \tau_k^L(h)(a)+\iota_{\#}(y)\Gamma_{k+2}\pi'\\
																 & = \tau_k^L(h)(a)+\tau_k^L(\widetilde{h})(a),
\end{split}
\end{equation}
which shows that $\tau_k^L$ is a group homomorphism. From the definition of $\tau_k^L$ it follows that $\text{ker}(\tau_k^L)=J_{k+1}^L\mathcal{M}$, and so $J_{k+1}^L\mathcal{M}$ is a subgroup of $\mathcal{M}$. This completes the proof.
\end{proof}

 A similar argument to the one used to show that $\tau_k$ takes values in $D_k(H)$ \cite[Remark 3.3]{MR1224104}, works to show that $\tau_k^L$ takes values in
\begin{equation}\label{ecuacion3.12}%%%%%%%%%%%%%%%%%%%%%%%%%%%%%%%%%%%%%%%%%%%%%
D_k(H'):=\text{ker}\left(\left[\ ,\ \right]:H'\otimes\mathfrak{L}_{k+1}(H')\longrightarrow\mathfrak{L}_{k+2}(H')\right).
\end{equation}
This was already remarked by J. Levine \cite[Proposition 4.3]{MR1823501}. Let us recall the argument.  Consider the bases fixed in subsection \ref{seccion3.1}. Then, for  $h\in J_k^L\mathcal{M}$  the $k$-th Johnson-Levine homomorphism can be written
\begin{equation}\label{ecuacion3.13}%%%%%%%%%%%%%%%%%%%%%%%%%%%%%%%%%%%%%%%%%%%%%
\tau_k^L(h)=-\sum_{j=1}^{g}\iota_{*}(b_j)\otimes\{\iota_{\#}(h_{\#}(\alpha_j))\}_{k+2}=-\sum_{j=1}^{g}\iota_{*}(h_{*}(b_j))\otimes\{\iota_{\#}(h_{\#}(\alpha_j))\}_{k+2},
\end{equation}
where the second equality follows from Lemma \ref{lema3.1}($iii$).

\noindent  The Lie bracket  $\left[\ ,\ \right]:H'\otimes\mathfrak{L}_{k+1}(H')\longrightarrow\mathfrak{L}_{k+2}(H')$ corresponds to the commutator map 
\begin{equation}\label{ecuacion3.14}%%%%%%%%%%%%%%%%%%%%%%%%%%%%%%%%%%%%%%%%%%%%%
\Psi:\frac{\pi'}{\Gamma_2\pi'}\otimes\frac{\Gamma_{k+1}\pi'}{\Gamma_{k+2}\pi'}\longrightarrow\frac{\Gamma_{k+2}\pi'}{\Gamma_{k+3}\pi'},
\end{equation}
that sends $\{x'\}_{2}\otimes\{y'\}_{k+2}$ to $[x',y']\Gamma_{k+3}\pi'$. Thus
\begin{align*}\label{ecuacion3.15}%%%%%%%%%%%%%%%%%%%%%%%%%%%%%%%%%%%%%%%%%%%%%
\Psi(\tau_k^L(h))&=\sum_{j=1}^g\Psi\Big(\iota_{*}(h_{*}(-b_j))\otimes\iota_{\#}(h_{\#}(\alpha_j))\Gamma_{k+2}\pi'\Big)\\
								 &=\sum_{j=1}^g\Psi\Big(\{\iota_{\#}h_{\#}(\beta^{-1}_j)\}_2\otimes\{\iota_{\#}h_{\#}(\alpha_j)\}_{k+2}\Big)\\
								 &=\iota_{\#}h_{\#}\Big(\prod_{j=1}^g \left[\beta^{-1}_j,\alpha_j\right]\Big)\Gamma_{k+3}\pi'\\
								 &=\Gamma_{k+3}\pi',
\end{align*}
where the last equality holds because $\prod_{j=1}^g \left[\beta^{-1}_j,\alpha_j\right]$ represents the inverse of the homotopy class of $\partial\Sigma$ (see Figure \ref{figura17}), and this element is fixed by $h_{\#}$. Hence $\tau_k^L(h)\in D_k(H')$. To sum up, we have a  descending chain  of subgroups
\begin{equation}\label{ecuacion3.8}%%%%%%%%%%%%%%%%%%%%%%%%%%%%%%%%%%%%%%%%%%%%%
\mathcal{M}\supseteq\mathcal{L}\supseteq \mathcal{IL}=J_1^L\mathcal{M}\supseteq J_2^L\mathcal{M}\supseteq\cdots
\end{equation} 
and a family of group homomorphisms 
\begin{equation}\label{ecuacion3.16}%%%%%%%%%%%%%%%%%%%%%%%%%%%%%%%%%%%%%%%%%%%%%
\tau_k^L:J_k^L\mathcal{M}\rightarrow D_k(H').
\end{equation}

\begin{remark}\label{handlebodygroup}
Let $\mathcal{H}$ (or $\mathcal{H}_{g,1}$) be the subgroup of $\mathcal{M}$ consisting of the elements that can be extended to the handlebody $V$. The subgroup $\mathcal{H}$ is called the \emph{handlebody group} and is contained in $\mathcal{L}$. By virtue of Dehn's lemma $\mathcal{H}=\{h\in\mathcal{M}\ |\ h_{\#}(\mathbb{A})\subseteq \mathbb{A}\}$, see \cite[Theorem 10.1]{MR0159313}. J. Levine showed in \cite[Proposition 4.1]{MR1823501} that 
$$\bigcap_{k\geq1} J_k^L\mathcal{M}=\mathcal{H}\cap\mathcal{IL}.$$
The inclusion $\mathcal{H}\cap\mathcal{IL}\subseteq \bigcap_{k\geq1} J_k^L\mathcal{M}$ is clear. Now, let $h\in\bigcap_{k\geq1} J_k^L\mathcal{M}$ and $\alpha\in \mathbb{A}$, thus $\iota_{\#}h_{\#}(\alpha)\in\Gamma_{k+1}\pi'$ for all $k\geq1$. Since $\pi'$ is residually nilpotent we have $\iota_{\#}h_{\#}(\alpha)=1$, that is, $h_{\#}(\alpha)\in\mathbb{A}$. Hence $\bigcap_{k\geq1} J_k^L\mathcal{M}\subseteq\mathcal{H}\cap\mathcal{IL}$.
\end{remark}
%_____________________________________________________________________________________________________
%---------------------------------------
%---------------------------------------
%---------------------------------------
%---------------------------------------    
\subsection{The monoid of Lagrangian homology cobordisms}

The Johnson-Levine filtration can be defined similarly on the monoid $\mathcal{C}$ of homology cobordisms. Let us start by defining the analogues to the Lagrangian and strongly Lagrangian mapping class groups.

The monoid of \emph{Lagrangian homology cobordisms} is defined as
\begin{equation}\label{ecuacion3.20}%%%%%%%%%%%%%%%%%%%%%%%%%%%%%%%%%%%%%%%%%%%%% 
\mathcal{LC}:=\{(M,m)\in\mathcal{C}\ \ |\ \ \rho_1(M)(A)\subseteq A\}=\{(M,m)\in\mathcal{C}\ \ |\ \ m_{+,*}(A)\subseteq m_{-,*}(A)\},
\end{equation}
and the monoid of \emph{strongly Lagrangian homology cobordisms} is defined as
\begin{equation}\label{ecuacion3.21}%%%%%%%%%%%%%%%%%%%%%%%%%%%%%%%%%%%%%%%%%%%%% 
\mathcal{ILC}:=\{(M,m)\in\mathcal{LC}\ \ |\ \ \rho_1(M)|_A=\text{Id}_A\}=\{(M,m)\in\mathcal{LC}\ \ |\ \ m_{+,*}|_A = m_{-,*}|_A\}.
\end{equation}

Notice that we have the inclusions $\mathcal{IL}\subseteq\mathcal{ILC}\subseteq\mathcal{LC}$. Let us see how these monoids are characterized in terms of the linking matrix.

\begin{lemma}\label{lemmalinking} Let $M\in\mathcal{C}_{g,1}$ and let $(B,\gamma)$ be its bottom-top tangle presentation. Then
\begin{enumerate}
\item[\emph{(\emph{i})}] $M$ belongs to $\mathcal{LC}_{g,1}$ if and only if $B$ is a homology cube and $\text{\emph{Lk}}(M)=\left( \begin{smallmatrix} 0 & \Lambda \\ {\Lambda}^{T} & \Delta \end{smallmatrix} \right)$, 

\medskip

\item[\emph{(\emph{ii})}] M belongs to $\mathcal{ILC}_{g,1}$ if and only if $B$ is a  homology cube and   $\text{\emph{Lk}}(M)=\left( \begin{smallmatrix} 0 & {\text{\emph{Id}}_g}\\ {\text{\emph{Id}}_g} & {\Delta} \end{smallmatrix} \right)$,

\medskip

\item[\emph{(\emph{iii})}] $M$ belongs to $\mathcal{IC}_{g,1}$ if and only if $B$ is a homology cube and   $\text{\emph{Lk}}(M)=\left( \begin{smallmatrix} 0 & {\text{\emph{Id}}_g}\\ {\text{\emph{Id}}_g} & 0 \end{smallmatrix} \right)$,
\end{enumerate}
where  $\Lambda$ and $\Delta$ are $g\times g$ matrices and $\Delta$ is symmetric.
\end{lemma}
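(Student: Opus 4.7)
The plan is to express everything in terms of the induced homology automorphism $\rho_1(M) = m_{-,*}^{-1}\circ m_{+,*}\in\mathrm{Sp}(H)$ (symplecticity follows from the fact that $m_{\pm,*}$ preserve the intersection form on $H$, a standard consequence of Poincaré--Lefschetz duality in $M$). In the symplectic basis $\{a_1,\ldots,a_g,b_1,\ldots,b_g\}$ of $H$ write
\[
\rho_1(M) \;=\; \begin{pmatrix} P & Q \\ S & R \end{pmatrix},
\]
so that the symplectic relations $\rho_1(M)^T J \rho_1(M) = J$ read $P^T S = S^T P$, $P^T R - S^T Q = I_g$, and $Q^T R = R^T Q$. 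In these terms $M\in\mathcal{LC}_{g,1}$ iff $S = 0$; $M\in\mathcal{ILC}_{g,1}$ iff additionally $P = I_g$ (equivalently $R = I_g$, using $P^T R = I_g$); and $M\in\mathcal{IC}_{g,1}$ iff $\rho_1(M) = I$.

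I would first address the homology-cube condition via Mayer--Vietoris for the decomposition $B = M \cup (g\text{ bottom 2-handles}) \cup (g\text{ top 2-handles})$. Since $\partial B \cong \mathbb{S}^2$, the connecting map vanishes on $H_1$, and
\[
H_1(B;\mathbb{Z}) \;\cong\; H_1(M;\mathbb{Z})\big/\bigl\langle m_{-,*}(a_i),\,m_{+,*}(b_j)\bigr\rangle,
\]
whose vanishing is equivalent to $B$ being a homology cube. In the $\mathbb{Z}$-basis $\{m_{-,*}(a_i),m_{-,*}(b_j)\}$ of $H_1(M)$, the $2g$ attaching classes form the columns of $\left(\begin{smallmatrix} I_g & Q \\ 0 & R\end{smallmatrix}\right)$, so $B$ is a homology cube iff $R$ is unimodular; in particular this is automatic once $S = 0$, from $P^T R = I_g$, which covers the homology-cube part of the forward implications in (i)--(iii).

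The heart of the proof is the computation of the linking matrix $\text{Lk}(M)$ as an explicit function of the blocks of $\rho_1(M)$. I view $\hat B = M \cup N$, with $N$ the genus-$2g$ handlebody formed by the $2g$ two-handles and the complementary 3-ball $\mathbb{S}^3\setminus[-1,1]^3$, so that each $\hat\gamma_i^\pm$ is (up to isotopy in $N$) a meridian of one of the attaching curves $m_-(\alpha_i)$ or $m_+(\beta_j)$. For each $\hat\gamma_j^\pm$ one constructs a Seifert surface in $\hat B$ by pairing the core disk of the corresponding 2-handle with a 2-chain in $M$ realising a homological relation among the attaching curves — such relations being encoded precisely by the inverse of $\left(\begin{smallmatrix} I_g & Q \\ 0 & R\end{smallmatrix}\right)$ when $R$ is unimodular — and then computes transverse intersections with the remaining meridians; framings, which fix the diagonal entries, come from the blackboard-framing convention on the cocores. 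The resulting block formula has the shape
\[
\text{Lk}(M) \;=\; \begin{pmatrix} X(S,R) & R^{-T} \\ R^{-1} & Y(P,Q,R) \end{pmatrix},
\]
with $X(S,R) = 0 \Leftrightarrow S = 0$ and $Y(P,Q,R)$ symmetric precisely because $Q^T R$ is symmetric. Granted this formula, the three equivalences follow mechanically: (i) the $(+,+)$-block vanishes iff $S = 0$, with $\Delta = Y$ automatically symmetric; (ii) further $\Lambda = R^{-T} = I_g$ forces $R = I_g$ and hence $P = I_g$; (iii) further $\Delta = 0$ forces $Q = 0$, i.e.\ $\rho_1(M) = I$. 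The main obstacle is this linking-matrix computation itself — constructing the Seifert surfaces cleanly and keeping careful track of signs, framings and orientations in the Heegaard-like decomposition of $\hat B$; once that is in hand, what remains is routine linear algebra on $\mathrm{Sp}(H)$.
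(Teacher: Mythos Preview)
Your overall framework is sound, and the Mayer--Vietoris step matches the paper's. But you have made the central computation harder than it needs to be, and then left it undone: you assert a block formula for $\mathrm{Lk}(M)$ in terms of $P,Q,R,S$ via Seifert surfaces and intersection counts, acknowledge this as ``the main obstacle,'' and then read off (i)--(iii) from the asserted formula. Since the claims $X(S,R)=0\Leftrightarrow S=0$ and $\Delta=0\Rightarrow Q=0$ depend entirely on that formula, the proof as written has a gap exactly at its key step.

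The paper avoids this obstacle completely. The observation you are missing is that the meridians and longitudes of the tangle components \emph{are} the images of the basis curves: $m_-(\alpha_k)$ and $m_-(\beta_k)$ are the meridian and longitude of $\gamma_k^-$, while $m_+(\beta_k)$ and $m_+(\alpha_k)$ are the meridian and longitude of $\gamma_k^+$. Hence the set of meridians is exactly $\{m_{+,*}(b_k),m_{-,*}(a_k)\}$, which (by the same Mayer--Vietoris argument) is a basis of $H_1(M)$ precisely when $B$ is a homology cube. In that case the linking matrix is, by definition, the matrix expressing the longitudes $m_{+,*}(a_j),m_{-,*}(b_j)$ in the basis of meridians --- no Seifert surfaces needed. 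The conditions $m_{+,*}(A)\subseteq m_{-,*}(A)$, $m_{+,*}|_A=m_{-,*}|_A$, and $m_{+,*}=m_{-,*}$ then translate directly into the vanishing of the $(+,+)$-block, the off-diagonal block being $\mathrm{Id}_g$, and the full matrix being $\left(\begin{smallmatrix}0&\mathrm{Id}_g\\ \mathrm{Id}_g&0\end{smallmatrix}\right)$, respectively. So rather than computing $\mathrm{Lk}(M)$ as a function of $\rho_1(M)$ and then inverting, the paper works directly with the expansion in the meridian basis; the symmetry of $\Delta$ and of the $(+,+)$-block is then just the symmetry of the linking pairing.
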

\begin{proof}

The proof is similar to the proof of \cite[Lemma 2.12]{MR2403806}. Consider the bases fixed  in subsection \ref{seccion3.1}. A Mayer-Vietoris  argument shows that $H_1(B;\mathbb{Z})$ is isomorphic to the quotient of $H_1(M;\mathbb{Z})$ by the subgroup spanned by $S=\{m_{+,*}(b_1),\ldots m_{+,*}(b_g), m_{+,*}(a_1),\ldots, m_{+,*}(a_g)\}$. Hence  $B$ is a homology cube if and only if $S$ is a basis for $H_1(M;\mathbb{Z})$.

If $S$ is a basis  for $H_1(M;\mathbb{Z})$, then for $1\leq j\leq g$,    
\begin{equation}\label{eq1lemma}
m_{+,*}(a_j)= \sum_{k=1}^{g}\chi_{kj} m_{+,*}(b_k)+ \lambda_{kj} m_{-,*}(a_k) \text{\ \ \ and \ \ \ } m_{-,*}(b_j)= \sum_{k=1}^{g}\epsilon_{kj} m_{+,*}(b_k) + \delta_{kj} m_{-,*}(a_k),
\end{equation}
where the $\chi$'s, $\lambda$'s, $\epsilon$'s and $\delta$'s are integer coefficients. We observe that $m_-(\beta_k)$ and $m_-(\alpha_k)$ are  the oriented longitude and oriented meridian of $\gamma_k^-$, respectively. Similarly  $m_+(\alpha_k)$ and $m_+(\beta_k)$ are the oriented longitude and oriented meridian of $\gamma_k^+$, respectively,  see Figure \ref{figura4.7}. Now, the columns of $\text{Lk}(B,\gamma)$ express how the oriented longitudes $m_+(\alpha_1)$, $\ldots$, $m_+(\alpha_k)$, $m_-(\beta_1)$, $\ldots$, $m_-(\beta_g)$ expand in the basis $S$. So we have $\chi_{ij}=\chi_{ji}$, $\lambda_{ij}=\epsilon_{ji}$, $\delta_{ij}=\delta_{ji}$ and  $\text{Lk}(M)=\left( \begin{smallmatrix} X & \Lambda^T\\ \Lambda & \Delta \end{smallmatrix} \right)$, where $X=(\chi_{ij})$, $\Lambda=(\lambda_{ij})$ and $\Delta=(\delta_{ij})$.

If $M\in\mathcal{LC}_{g,1}$, then $m_{+,*}(A)\subseteq m_{-,*}(A)$  so $S$ is a basis for $H_1(M;\mathbb{Z})$ and all the coefficients $\chi_{ij}$ in equation (\ref{eq1lemma}) are zero. Thus $B$ is a homology cube and $X=0$. Conversely, if $B$ is a homology cube and $X=0$, then $M\in\mathcal{LC}_{g,1}$. Therefore we have $(i)$.

Now assuming that $B$ is a homology cube, we have that  $M\in\mathcal{ILC}$ if and only if $X=0$ and $\Lambda=\text{Id}_g$, thus we have ($ii$). Similarly, $M\in\mathcal{IC}$ if and only if $X=\Delta=0$ and $\Lambda=\text{Id}_{g}$, so we have ($iii$).
\end{proof}	

Using Lemma \ref{lemmalinking} let us now see that the inclusions $\mathcal{IC}\subseteq\mathcal{ILC}$ and $\mathcal{ILC}\subseteq\mathcal{LC}$ are strict.

\begin{example}\label{ejemplohopfs} Let $g\geq2$. Consider the identity cobordism $M=\Sigma_{g,1}\times[-1,1]$ and embed two framed Hopf links $L_1$ and $L_2$ as in Figure \ref{figura18}($a$). Perform  surgery along $L_1$ and $L_2$. The resulting cobordism $M_{L_1\cup L_2}$ belongs to $\mathcal{LC}$ but not to $\mathcal{ILC}$. This follows from Lemma \ref{lemmalinking}: in Figure \ref{figura18}($b$) we show the bottom-top tangle presentation of $M_{L_1\cup L_2}$, which allows to compute its linking matrix.
\begin{figure}[ht!]
										\centering
                        \includegraphics[scale=0.6]{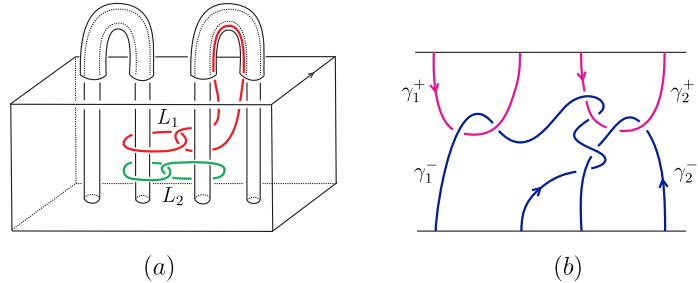}
												\caption{($a$) Embedding of $L_1$ and $L_2$  in $M$ and ($b$) bottom-top tangle presentation of $M_{L_1\cup L_2}$.}
												\label{figura18}
\end{figure}
\end{example}

\begin{example}\label{ejemplohopf} Let $g\geq2$. Consider  $M=\Sigma_{g,1}\times[-1,1]$ and embed a framed Hopf link $L$  as in Figure \ref{figura26}($a$).  The  resulting cobordism $M_{L}$, obtained after  surgery along $L$, belongs to $\mathcal{ILC}$ but not to $\mathcal{IC}$. In Figure \ref{figura26}($b$) we show the bottom-top tangle presentation of $M_{L}$, which allows to compute its linking matrix.
\begin{figure}[ht!]
										\centering
                        \includegraphics[scale=0.6]{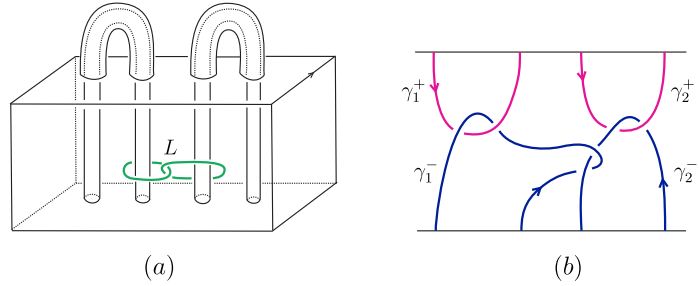}
												\caption{($a$) Embedding of $L$  in $M$ and ($b$) bottom-top tangle presentation of $M_L$.}
												\label{figura26}
\end{figure}
\end{example}

\begin{definition} The \emph{Lagrangian filtration} or \emph{Johnson-Levine filtration}  of $\mathcal{C}$ is the descending chain of submonoids $\{J_k^L\mathcal{C}\}_{k\geq1}$ defined as
$$J_k^L\mathcal{C}:=\{(M,m)\in \mathcal{ILC}\ \ |\ \ \forall\alpha\in\mathbb{A},\  \iota_{k+1}\rho_{k}(M)(\{\alpha\}_{k+1})=1\in\pi'/\Gamma_{k+1}\pi'\},$$
where $\iota_{k+1}:\pi/\Gamma_{k+1}\pi\rightarrow\pi'/\Gamma_{k+1}\pi'$ is  induced by $\iota_{\#}:\pi\rightarrow\pi'$.
\end{definition}
Notice that $J_1^L\mathcal{C}=\mathcal{ILC}$. To summarize, we have a descending chain of submonoids
\begin{equation}\label{ecuacion3.82}%%%%%%%%%%%%%%%%%%%%%%%%%%%%%%%%%%%%%%%%%%%%%
\mathcal{C}\supseteq\mathcal{LC}\supseteq \mathcal{ILC}=J_1^L\mathcal{C}\supseteq J_2^L\mathcal{C}\supseteq\cdots
\end{equation} 

\begin{definition} Let $k\geq 1$. The $k$-th  \emph{Johnson-Levine homomorphism}
\begin{equation}\label{ecuacion3.22}%%%%%%%%%%%%%%%%%%%%%%%%%%%%%%%%%%%%%%%%%%%%% 
\tau_k^L:J_k^L\mathcal{C}\rightarrow\text{\emph{Hom}}(A,\Gamma_{k+1}\pi'/\Gamma_{k+2}\pi')\cong A^*\otimes \mathfrak{L}_{k+1}(H')\cong H'\otimes \mathfrak{L}_{k+1}(H'),
\end{equation}

\noindent is the map that sends $(M,m)\in J_k^L\mathcal{C}$ to the map   $a\in A\mapsto \{\iota_{k+2}\rho_{k+1}(M)(\{\alpha\})\}_{k+2}$, where  $\alpha\in\mathbb{A}$ is such that $\text{\emph{ab}}(\alpha)=a$.
\end{definition}

Notice that $\text{ker}(\tau_k^L)=J_{k+1}^L\mathcal{C}$. The same arguments as those used in the case of the mapping class group, work to show that $J_k^L\mathcal{C}$ is a submonoid of $\mathcal{C}$ and that $\tau_k^L$ is well defined, it is a monoid homomorphism, and that it takes values in $D_k(H')$.

%_____________________________________________________________________________________________________
%---------------------------------------
%---------------------------------------
%---------------------------------------
%---------------------------------------

%_____________________________________________________________________________________________________
%---------------------------------------
%---------------------------------------
%---------------------------------------
%---------------------------------------
\section{Properties of the Johnson-Levine homomorphisms}\label{seccion4}
In this section we study some properties of the Johnson-Levine homomorphisms and we compare the Johnson-Levine filtration to the  Johnson Levine filtration.

%_____________________________________________________________________________________________________
%---------------------------------------
%---------------------------------------
%---------------------------------------
%---------------------------------------

\subsection{Surjectivity of the  Johnson-Levine homomorphisms}

Let us start by the compatibility between the Johnson and Johnson-Levine homomorphisms. The surjective homomorphism $\iota_*:H\rightarrow H'$ induces surjective homomorphisms $\mathfrak{L}_{k}(H)\rightarrow \mathfrak{L}_{k}(H')$ which are compatible with the Lie bracket. 

\begin{lemma}\label{lema4.1} The  map $\iota_*:D_k(H)\rightarrow D_k(H')$ induced by $\iota_*:H\rightarrow H'$ is surjective.
\end{lemma}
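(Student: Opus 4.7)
The plan is to exploit the fact that $A$ is a Lagrangian direct summand of $H$. Fix the basis from Subsection 3.1, so that $H = A \oplus B$ where $B := \langle b_1, \ldots, b_g\rangle$, and $\iota_*$ identifies $B$ with $H'$ via $b_j \mapsto \iota_*(b_j)$. In particular the restriction $\iota_*|_B : B \to H'$ is an isomorphism of free abelian groups of rank $g$.

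Since the free Lie algebra functor sends isomorphisms to isomorphisms and injections of direct summands to injections, the map $\iota_*|_B$ induces, for every $n \geq 1$, an isomorphism $\mathfrak{L}_n(B) \xrightarrow{\cong} \mathfrak{L}_n(H')$, and the inclusion $B \hookrightarrow H$ induces an inclusion $\mathfrak{L}_n(B) \hookrightarrow \mathfrak{L}_n(H)$ compatible with the Lie bracket. I will use these to build a section of $\iota_* : D_k(H) \to D_k(H')$ by the following three steps. First, given $\xi \in D_k(H') \subseteq H' \otimes \mathfrak{L}_{k+1}(H')$, expand it in the basis of $H'$ as
\[
\xi \;=\; \sum_{j=1}^g \iota_*(b_j) \otimes \ell_j, \qquad \ell_j \in \mathfrak{L}_{k+1}(H').
\]
Second, lift each $\ell_j$ to the unique preimage $\tilde{\ell}_j \in \mathfrak{L}_{k+1}(B) \subseteq \mathfrak{L}_{k+1}(H)$ under the isomorphism $\mathfrak{L}_{k+1}(B) \xrightarrow{\cong} \mathfrak{L}_{k+1}(H')$, and set
\[
\tilde{\xi} \;:=\; \sum_{j=1}^g b_j \otimes \tilde{\ell}_j \;\in\; H \otimes \mathfrak{L}_{k+1}(H).
\]
By construction $(\mathrm{Id} \otimes \iota_*)(\tilde{\xi}) = \xi$, so the only thing left is to check that $\tilde{\xi} \in D_k(H)$.

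The verification is immediate once the functoriality is set up correctly. Computing the bracket gives
\[
[\tilde{\xi}] \;=\; \sum_{j=1}^g [b_j, \tilde{\ell}_j] \;\in\; \mathfrak{L}_{k+2}(B) \;\hookrightarrow\; \mathfrak{L}_{k+2}(H),
\]
and the image of this element under the isomorphism $\mathfrak{L}_{k+2}(B) \xrightarrow{\cong} \mathfrak{L}_{k+2}(H')$ is exactly $\sum_j [\iota_*(b_j), \ell_j] = [\xi] = 0$, since $\xi \in D_k(H')$. Therefore $[\tilde{\xi}] = 0$ in $\mathfrak{L}_{k+2}(B)$, hence in $\mathfrak{L}_{k+2}(H)$, so $\tilde{\xi} \in D_k(H)$ as desired.

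There is really no serious obstacle here: the whole argument is a consequence of the Lagrangian splitting $H = A \oplus B$ together with the functoriality of the free Lie algebra with respect to split injections. The mildly delicate point to articulate is that the bracket of $\tilde{\xi}$ lives in the sub-Lie-algebra $\mathfrak{L}(B)$ on which $\iota_*$ is an isomorphism, which is what forces its vanishing from the vanishing of $[\xi]$ in $\mathfrak{L}(H')$.
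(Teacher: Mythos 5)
Your proof is correct and is essentially the paper's argument: the paper lifts $y\in D_k(H')$ along an (arbitrary) group section $s:H'\to H$ of $\iota_*$ and uses the naturality of the Lie bracket to see that the lift lies in $D_k(H)$, while you simply make that section explicit as $H'\cong B=\langle b_1,\dots,b_g\rangle\subseteq H$ via the symplectic basis. The verification that the bracket of the lift vanishes is the same functoriality argument in both cases, so there is no substantive difference.
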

\begin{proof} The result follows from the existence of a group section $s:H'\rightarrow H$ of the surjective homomorphism $\iota_*:H\rightarrow H'$ and the commutative diagram 
\begin{equation}\label{ecuacion4.1}%%%%%%%%%%%%%%%%%%%%%%%%%%%%%%%%%%%%%%%%%%%%%
\xymatrix{  H\otimes\mathfrak{L}_{k+1}(H)\ar[r]^{\ \ \left[\ ,\ \right]}\ar[d]_{\iota_*\otimes\iota_*} & \mathfrak{L}_{k+2}(H) \ar[d]^{\iota_*} \\
						H'\otimes\mathfrak{L}_{k+1}(H')\ar[r]^{\ \ \left[\ ,\ \right]} & \mathfrak{L}_{k+2}(H').}
\end{equation}
More precisely, denote by $\Psi$ and $\Psi'$ the Lie brackets $\left[\ ,\ \right]:H\otimes\mathfrak{L}_{k+1}(H)\rightarrow\mathfrak{L}_{k+2}(H)$ and $\left[\ ,\ \right]:H'\otimes\mathfrak{L}_{k+1}(H')\rightarrow\mathfrak{L}_{k+2}(H')$, respectively. Let $y\in D_k(H')\subseteq H'\otimes\mathfrak{L}_{k+1}(H')$. The group section $s$ allows us to lift $y$ to $s(y)\in H\otimes\mathfrak{L}_{k+1}(H)$ such that $\iota_*(s(y))=y$. We deduce  $\Psi(s(y))=s\Psi'(y)=0$, hence $s(y)\in D_k(H)$.
\end{proof}

As pointed out by J. Levine in \cite[Section 4]{MR2265877}, in the mapping class group case,  the Johnson homomorphisms and the Johnson-Levine homomorphisms are compatible. This also holds for the monoid of homology cobordisms.

\begin{proposition}\label{proposition1seccion4} For every positive integer $k$, the  diagram 
\begin{equation}\label{ecuacion4.2}%%%%%%%%%%%%%%%%%%%%%%%%%%%%%%%%%%%%%%%%%%%%%
\xymatrix{  J_k\mathcal{C}\ar[r]^{\subset}\ar[d]_{\tau_k} & J_k^L\mathcal{C} \ar[d]^{\tau_k^L} \\
						D_k(H)\ar[r]^{\iota_*} & D_k(H')}
\end{equation}
is commutative.
\end{proposition}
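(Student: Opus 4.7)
The plan is to unpack all definitions and then verify that the two images in $D_k(H')$ agree elementwise on $A$.

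First, I would verify the inclusion $J_k\mathcal{C}\subseteq J_k^L\mathcal{C}$. If $(M,m)\in J_k\mathcal{C}$ then $\rho_k(M)=\mathrm{Id}$ on $\pi/\Gamma_{k+1}\pi$; in particular $\rho_1(M)=\mathrm{Id}_H$, so $M\in\mathcal{IC}\subseteq\mathcal{ILC}$, and for $\alpha\in\mathbb{A}$ we have
\begin{equation*}
\iota_{k+1}\rho_k(M)(\{\alpha\}_{k+1})=\iota_{k+1}(\{\alpha\}_{k+1})=\{\iota_\#(\alpha)\}_{k+1}=1,
\end{equation*}
since $\mathbb{A}=\ker\iota_\#$. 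Hence $M\in J_k^L\mathcal{C}$ as required.

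Next, fix $(M,m)\in J_k\mathcal{C}$ and $a\in A$, and choose $\alpha\in\mathbb{A}$ with $\mathrm{ab}(\alpha)=a$. Because $M\in J_k\mathcal{C}$, the element
\begin{equation*}
\rho_{k+1}(M)(\{\alpha\}_{k+2})\cdot\{\alpha\}_{k+2}^{-1}\ \in\ \Gamma_{k+1}\pi/\Gamma_{k+2}\pi
\end{equation*}
is well defined and, by the definition (\ref{ecuacion2.2}) of the Johnson homomorphism together with the intersection-form identification $H^*\cong H$, equals $\tau_k(M)(a)$. Applying $\iota_{k+2}$ to the identity $\rho_{k+1}(M)(\{\alpha\}_{k+2})=\{\alpha\}_{k+2}\cdot\tau_k(M)(a)$ and using $\iota_\#(\alpha)=1$ yields
\begin{equation*}
\tau_k^L(M)(a)=\iota_{k+2}\rho_{k+1}(M)(\{\alpha\}_{k+2})=\iota_{k+2}\bigl(\tau_k(M)(a)\bigr),
\end{equation*}
and under the identifications $\Gamma_{k+1}\pi/\Gamma_{k+2}\pi\cong\mathfrak{L}_{k+1}(H)$ and $\Gamma_{k+1}\pi'/\Gamma_{k+2}\pi'\cong\mathfrak{L}_{k+1}(H')$, the restriction of $\iota_{k+2}$ is precisely the map $\mathfrak{L}_{k+1}(\iota_*)$ induced by $\iota_*\colon H\to H'$.

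The final step—and the delicate one—is to check that this pointwise identity reassembles into the advertised square, in other words that the identifications used to view $\tau_k(M)\in H\otimes\mathfrak{L}_{k+1}(H)$ and $\tau_k^L(M)\in H'\otimes\mathfrak{L}_{k+1}(H')$ intertwine the left edge $\iota_*\otimes\iota_*$ of (\ref{ecuacion4.2}) with the operation ``restrict the first tensor factor to $A$ and then apply $\iota_*$ on both factors.'' The key point is that, since $A$ is Lagrangian, the intersection form identifies $A^*$ with $H/A$, and the isomorphism $\iota_*\colon H/A\xrightarrow{\cong}H'$ makes the dual of the inclusion $A\hookrightarrow H$ correspond to $\iota_*\colon H\twoheadrightarrow H'$ on the first tensor factor. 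Combined with $\mathfrak{L}_{k+1}(\iota_*)$ on the second factor (which is what appeared above), this produces exactly the left edge $\iota_*\colon D_k(H)\to D_k(H')$ from Lemma \ref{lema4.1}. The main obstacle is this bookkeeping of identifications (intersection form $H^*\!\cong H$, Poincaré duality on $A$, and the quotient $H/A\cong H'$); once these are carefully matched, the commutativity is immediate from the elementwise calculation above.
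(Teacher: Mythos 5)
Your proposal is correct and follows essentially the same route as the paper: unwind both definitions, use that $\iota_\#$ kills $\mathbb{A}$ so the second tensor factor simply gets $\mathfrak{L}_{k+1}(\iota_*)$ applied, and match the intersection-form identifications on the first factor via the Lagrangian property of $A$. The paper carries out this verification by writing $\tau_k(M)$ and $\tau_k^L(M)$ explicitly in the fixed symplectic basis (where the first-factor bookkeeping reduces to $\iota_*(a_j)=0$), whereas you do the same computation elementwise on $A$ and handle the duality identifications without coordinates; the content is the same.
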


\begin{proof}
From the definitions, it is clear that   $J_k\mathcal{C}\subseteq J_k^L\mathcal{C}$ for all $k\geq1$. Consider the free basis $\{\alpha_1,\ldots,\alpha_g,\beta_1,\ldots,\beta_g\}$ of $\pi$  and the symplectic basis   $\{a_1,\ldots, a_g, b_1,\ldots,b_g\}$ of $H$ fixed in subsection \ref{seccion3.1}. Let $M=(M,m)\in J_k\mathcal{C}\subseteq J_k^L\mathcal{C}$. In these  bases, the $k$-th Johnson homomorphism is given by the formula
\begin{equation}\label{ecuacion1propo}
\tau_k(M)=\sum_{j=1}^g a_j\otimes\left(\rho_{k+1}(M)(\{\beta_j\})\cdot \{\beta_j^{-1}\}_{k+2}\right)-\sum_{j=1}^g b_j\otimes\left(\rho_{k+1}(M)(\{\alpha_j\}))\cdot \{\alpha_j^{-1}\}_{k+2}\right).
\end{equation}

\noindent Similarly, the $k$-th Johnson-Levine homomorphism is given by the formula
\begin{equation}\label{ecuacion2propo}
\tau_k^L(M)=-\sum_{j=1}^g \iota_*(b_j)\otimes\left(\iota_{k+2}\rho_{k+1}(M)(\{\alpha_j\})\right). 
\end{equation}
\noindent Thus by applying $\iota_*:D_k(H)\rightarrow D_k(H')$ to equation (\ref{ecuacion1propo}) we obtain $\iota_*\tau_k(M)=\tau_k^L(M)$.
\end{proof}

From Proposition \ref{proposition1seccion4} and Theorem \ref{thmgl} we obtain the following corollary.

\begin{corollary}\label{corollaryequ4.3} For every positive integer $k$, we have
\begin{equation}\label{ecuacion4.3}%%%%%%%%%%%%%%%%%%%%%%%%%%%%%%%%%%%%%%%%%%%%%
 \text{\emph{ker}}(D_k(H)\stackrel{\iota_*}{\longrightarrow} D_k(H'))=\tau_k(J_k\mathcal{C}\cap J_{k+1}^L\mathcal{C}).
\end{equation}
\end{corollary}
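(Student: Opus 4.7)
The plan is to derive both inclusions directly from the commutative diagram of Proposition \ref{proposition1seccion4} together with the surjectivity of $\tau_k$ on $J_k\mathcal{C}$ provided by Theorem \ref{thmgl}.

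For the inclusion $\tau_k(J_k\mathcal{C}\cap J_{k+1}^L\mathcal{C})\subseteq \text{ker}(\iota_*)$, I would take $M\in J_k\mathcal{C}\cap J_{k+1}^L\mathcal{C}$ and observe that $M\in J_{k+1}^L\mathcal{C}=\text{ker}(\tau_k^L)$, so $\tau_k^L(M)=0$. The commutativity of diagram (\ref{ecuacion4.2}) then forces $\iota_*(\tau_k(M))=\tau_k^L(M)=0$, so $\tau_k(M)\in\text{ker}(\iota_*)$.

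For the reverse inclusion, I would start from $y\in\text{ker}(D_k(H)\stackrel{\iota_*}{\to}D_k(H'))$ and invoke Theorem \ref{thmgl} (Garoufalidis--Levine surjectivity) to pick $M\in J_k\mathcal{C}$ with $\tau_k(M)=y$. Applying Proposition \ref{proposition1seccion4} once more, $\tau_k^L(M)=\iota_*(\tau_k(M))=\iota_*(y)=0$, hence $M\in\text{ker}(\tau_k^L)=J_{k+1}^L\mathcal{C}$. Therefore $M\in J_k\mathcal{C}\cap J_{k+1}^L\mathcal{C}$ and $\tau_k(M)=y$, giving $y\in\tau_k(J_k\mathcal{C}\cap J_{k+1}^L\mathcal{C})$.

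There is no real obstacle here: the statement is essentially a formal consequence of two already-established facts, namely the commutativity of (\ref{ecuacion4.2}) and the surjectivity of the Johnson homomorphism on homology cobordisms. The only subtle point worth flagging is that surjectivity is genuinely needed for the nontrivial inclusion $\supseteq$; without it one could only recover $\text{im}(\tau_k)\cap\text{ker}(\iota_*)$ rather than the full kernel.
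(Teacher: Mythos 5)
Your proof is correct and follows exactly the route the paper intends: the corollary is stated there as an immediate consequence of the commutative diagram of Proposition \ref{proposition1seccion4} (giving the inclusion $\subseteq$ of the image in the kernel via $\ker(\tau_k^L)=J_{k+1}^L\mathcal{C}$) and of the surjectivity of $\tau_k$ from Theorem \ref{thmgl} (giving the reverse inclusion). Your remark that surjectivity is what upgrades $\operatorname{im}(\tau_k)\cap\ker(\iota_*)$ to the full kernel is exactly the point.
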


According to  Lemma \ref{lema4.1}, Proposition \ref{proposition1seccion4} and Theorem \ref{thmgl}, we have the following.

\begin{corollary}[J. Levine \cite{MR1823501} Theorem 8]\label{proposicion4.1} For all $k\geq 1$, the Johnson-Levine homomorphism $\tau_k^L:J_k^L\mathcal{C}\rightarrow D_k (H')$ is surjective.
\end{corollary}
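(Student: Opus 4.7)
The plan is to deduce the surjectivity of $\tau_k^L$ directly by diagram chasing, using the three ingredients that have just been assembled: the surjectivity of $\tau_k$ for homology cobordisms (Theorem \ref{thmgl}), the surjectivity of the induced map $\iota_*: D_k(H)\to D_k(H')$ (Lemma \ref{lema4.1}), and the commutativity of the square in Proposition \ref{proposition1seccion4}.

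Concretely, starting from an arbitrary $y \in D_k(H')$, I would first apply Lemma \ref{lema4.1} to lift $y$ to some $x \in D_k(H)$ with $\iota_*(x) = y$. Next, by Theorem \ref{thmgl}, there exists a homology cobordism $M \in J_k\mathcal{C}$ with $\tau_k(M) = x$. The inclusion $J_k\mathcal{C} \subseteq J_k^L\mathcal{C}$ (noted in the proof of Proposition \ref{proposition1seccion4}) then allows us to regard $M$ as an element of $J_k^L\mathcal{C}$. Finally, applying the commutative square of Proposition \ref{proposition1seccion4}, we obtain
\[
\tau_k^L(M) \;=\; \iota_*\bigl(\tau_k(M)\bigr) \;=\; \iota_*(x) \;=\; y,
\]
which yields the desired surjectivity.

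There is no genuine obstacle here: all the technical work has already been carried out, either in the Habegger/Garoufalidis--Levine realization of Johnson homomorphisms via string links and Milnor invariants (Theorem \ref{thmgl}), or in the simple splitting argument producing a section $s: H' \to H$ of $\iota_*$ used in Lemma \ref{lema4.1}. The only point worth double checking is that the element $M \in J_k\mathcal{C}$ produced by Theorem \ref{thmgl} automatically satisfies the strong Lagrangian condition required to lie in $J_k^L\mathcal{C}$; but this is immediate since $J_k\mathcal{C} \subseteq J_1\mathcal{C} = \mathcal{IC} \subseteq \mathcal{ILC} = J_1^L\mathcal{C}$, and the condition $\iota_{k+1}\rho_k(M)(\{\alpha\}_{k+1}) = 1$ defining $J_k^L\mathcal{C}$ holds vacuously for $M \in J_k\mathcal{C}$ because $\rho_k(M) = \mathrm{Id}$ on $\pi/\Gamma_{k+1}\pi$. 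So the proof reduces to a one-line diagram chase once the preceding preparatory results are in place.
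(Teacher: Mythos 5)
Your argument is correct and is exactly the paper's own derivation: the corollary is obtained there by combining Lemma \ref{lema4.1}, Proposition \ref{proposition1seccion4} and Theorem \ref{thmgl} in precisely this diagram chase (the paper only remarks in addition that Levine's original proof instead used the Oda embedding). Your verification of the inclusion $J_k\mathcal{C}\subseteq J_k^L\mathcal{C}$ is also the same observation made in the proof of Proposition \ref{proposition1seccion4}.
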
 

The proof of J. Levine does not use Theorem \ref{thmgl}, instead, it uses the Oda embedding \cite{oda1992lower} and the surjectivity of Milnor invariants. More precisely, by choosing an  embedding of the disk $D_g^o$ with $g$ holes into $\Sigma$, we obtain the so-called \emph{Oda embedding} $\mathcal{O}:\mathcal{S}_g\longrightarrow \mathcal{C}_{g,1}$, see \cite[Section 3.2]{MR1823501}. This embedding relates the Milnor filtration with the Johnson filtration and it is compatible with the Milnor maps and the Johnson-Levine homomorphisms. These properties of the Oda embedding imply the surjectivity of the Johnson-Levine homomorphisms, for further details see \cite[Theorem 8]{MR1823501}.
%_____________________________________________________________________________________________________
%---------------------------------------
%---------------------------------------
%---------------------------------------
%---------------------------------------

\subsection{Invariance under the   \texorpdfstring{$Y_k$}{}-equivalence relation}\label{seccion4.2} In order to compare the Johnson filtration and the Johnson-Levine filtration, from our approach, we need to take some quotients of $J_k\mathcal{C}$ and $J_k^L\mathcal{C}$ by some equivalence relations to obtain a group structure compatible with the Johnson and Johnson-Levine homomorphisms. There are  at least two  ways to obtain a group from the monoid of homology cobordisms. One way is to consider homology cobordisms up to  4-dimensional homology bordism, see \cite{MR1823501,MR2131016}. Another way is to consider homology cobordisms up to $Y_k$-equivalence. We follow the latter approach. The notion of $Y_k$-equivalence was introduced independently by  M. Goussarov in \cite{MR1715131,MR1793618}  and by K. Habiro in \cite{MR1735632} in their study of finite type invariants. Here, we follow the terminology of  \cite{MR1735632}.

Let $G$ be a graph that can be decomposed into two subgraphs, say  $G=G'\cup G^o$, such that $G'$ is a unitrivalent graph and $G^o$ is a union of looped edges of $G$. The subgraph $G'$ is called the \emph{shape} of $G$. Let us consider a pair $(M,\gamma)$, where $M$ is a compact oriented $3$-manifold (possibly with boundary) and $\gamma$ is a framed oriented tangle (possibly empty)  in $M$ such that $\partial\gamma$ (if any) are fixed points in $\partial M$. A \emph{graph clasper} in $(M,\gamma)$ is an embedding $\mathbb{G}\hookrightarrow \text{int}(M\setminus\gamma)$ of a thickening $\mathbb{G}$ of $G$, see Figure \ref{figura4.9}. We still denote the image of the embedding by $G$. In particular, if the shape  of $G$ is simply connected, we call it a \emph{tree clasper}. The \emph{degree} of a  graph clasper is the number of trivalent vertices of its shape. If $G$ has degree 1 we call it a $Y$-\emph{clasper}. From now on, we assume that the degree of  graph claspers is greater than or equal to $1$.

\begin{figure}[ht!] 
										\centering
                        \includegraphics[scale=0.6]{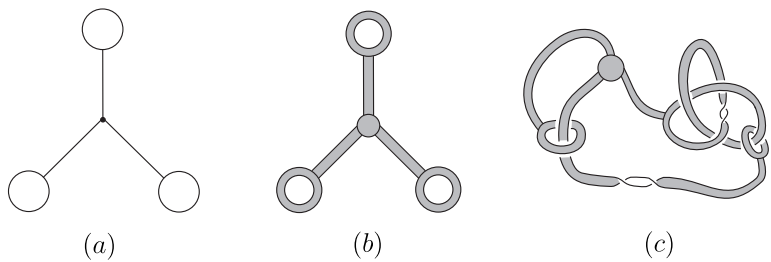}
												\caption{$(a)$ Graph $G$. $(b)$ Thickening $\mathbb{G}$. $(c)$ Embedding $\mathbb{G}\hookrightarrow M$ .}
												\label{figura4.9}
\end{figure}

A graph clasper $G$ in $(M,\gamma)$ carries surgery instructions for modifying this pair as follows. Suppose that $G$ has degree $1$. Consider a regular neighborhood $N(G)$ of $G$ in $\text{int}(M\setminus\gamma)$. Perform surgery in $N(G)$ along the framed six-component link $L$ illustrated in Figure \ref{figura4.10}. 

\begin{figure}[ht!] 
										\centering
                        \includegraphics[scale=0.48]{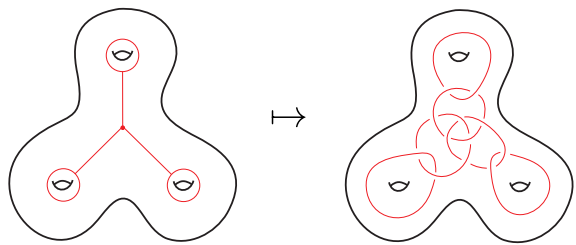}
												\caption{Framed link associated to a $Y$-clasper.}
												\label{figura4.10}
\end{figure}

\noindent Denote the result by $N(G)_L$. We obtain a new pair $(M_G,\gamma_G)$ by setting
$$M_G:=\left(M\setminus N(G)\right)\cup N(G)_L,$$
and $\gamma_G$ equal to the trace of $\gamma$ under the surgery. If $G$ is of degree $>1$ we apply the \emph{fission rule},  illustrated in Figure \ref{figura4.11}, until obtaining a disjoint union of $Y$-claspers. Then $(M_G,\gamma_G)$ is defined by performing surgery as before along each $Y$-clasper.  We say that $(M_G,\gamma_G)$ is obtained from $(M,\gamma)$ by a  $Y_k$-\emph{surgery}, where $k$ is the degree of $G$.

\begin{figure}[ht!] 
										\centering
                        \includegraphics[scale=0.7]{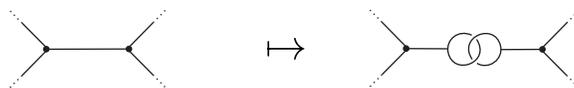}
												\caption{Fission rule.}
												\label{figura4.11}
\end{figure}

The $Y_k$-\emph{equivalence} is the equivalence relation among pairs $(M,\gamma)$ generated by $Y_k$-surgeries and orientation-preserving homeomorphisms. For $l\geq k$,  $Y_l$-equivalence implies $Y_k$-equivalence (this follows from Move $2$ and Move $9$ in \cite[Section 2.4]{MR1735632}).

Let us restrict to  the monoid of homology cobordisms.  K. Habiro proved in \cite[Theorem 5.4]{MR1735632} that $\mathcal{IC}/Y_r$ is a group. His proof is done in the setting of  string links but  the  arguments are the same  for $\mathcal{IC}$, see also \cite[Theorem 9.2]{MR1715131}.  From the short exact sequence
\begin{equation*}%%%%%%%%%%%%%%%%%%%%%%%%%%%%%%%%%%%%%%%%%%%%%
1\longrightarrow \mathcal{IC}/{Y_r}\stackrel{\subset}{\longrightarrow} \mathcal{C}/{Y_r}\stackrel{\rho_1}{\longrightarrow}\text{Sp}(H)\longrightarrow 1,
\end{equation*}
it follows that $\mathcal{C}/Y_r$ is also a group. From  \cite[Lemma 6.1]{MR3074379} it follows that the  homomorphism $\rho_{k+1}:\mathcal{C}\rightarrow\text{Aut}(\pi/\Gamma_{k+2}\pi)$  is invariant under $Y_{k+1}$-equivalence. Therefore, the Johnson homomorphism $\tau_k$ and the Johnson-Levine homomorphism $\tau_k^L$ are invariant under $Y_{k+l}$-equivalence for all $l\geq 1$. 

\begin{lemma}For $r\geq k\geq 1$, the group $\mathcal{C}/Y_r$ contains $J_k\mathcal{C}/Y_r$ and $J_k^L\mathcal{C}/Y_r$ as subgroups.
\end{lemma}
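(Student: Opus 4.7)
The plan is to realize each of $J_k\mathcal{C}/Y_r$ and $J_k^L\mathcal{C}/Y_r$ as the preimage of a subgroup under a group homomorphism out of $\mathcal{C}/Y_r$. Both sets are images of submonoids of $\mathcal{C}$ that contain the trivial cobordism, so they are automatically nonempty and closed under the operation of $\mathcal{C}/Y_r$; the only nontrivial point is closure under inversion in the group $\mathcal{C}/Y_r$.

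The key tool is that the monoid homomorphism $\rho_k : \mathcal{C} \to \mathrm{Aut}(\pi/\Gamma_{k+1}\pi)$ is invariant under $Y_k$-equivalence by the cited result from \cite{MR3074379}, and therefore under $Y_r$-equivalence for every $r \geq k$, since $Y_r$-equivalence implies $Y_k$-equivalence. Hence $\rho_k$ descends to a monoid homomorphism $\overline{\rho_k} : \mathcal{C}/Y_r \to \mathrm{Aut}(\pi/\Gamma_{k+1}\pi)$, which is automatically a group homomorphism since its source is a group. By definition $J_k\mathcal{C} = \rho_k^{-1}(\mathrm{Id})$, hence $J_k\mathcal{C}/Y_r = \ker(\overline{\rho_k})$ is a subgroup of $\mathcal{C}/Y_r$.

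For $J_k^L\mathcal{C}/Y_r$, set $K := \mathbb{A}\cdot\Gamma_{k+1}\pi/\Gamma_{k+1}\pi \subseteq \pi/\Gamma_{k+1}\pi$, which coincides with $\ker(\iota_{k+1})$. The definition of $J_k^L\mathcal{C}$ translates into two conditions on $\phi := \rho_k(M)$: that the induced automorphism of $H = \pi/\Gamma_2\pi$ restricts to the identity on $A$, and that $\phi(K) \subseteq K$. Let $\mathcal{L}^\ast$ denote the set of automorphisms of $\pi/\Gamma_{k+1}\pi$ that satisfy both conditions; then $J_k^L\mathcal{C}/Y_r = \overline{\rho_k}^{-1}(\mathcal{L}^\ast)$, so it suffices to show that $\mathcal{L}^\ast$ is a subgroup of $\mathrm{Aut}(\pi/\Gamma_{k+1}\pi)$. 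Stability under composition is clear; stability under inversion is the main obstacle. The first condition passes to $\phi^{-1}$ immediately: from $\phi(a) = a$ one obtains $\phi^{-1}(a) = a$. For the second, the groups $\pi/\Gamma_{k+1}\pi$ and $(\pi/\Gamma_{k+1}\pi)/K \cong \pi'/\Gamma_{k+1}\pi'$ are finitely generated nilpotent, hence Hopfian. If $\phi(K) \subseteq K$, the endomorphism of $\pi'/\Gamma_{k+1}\pi'$ induced by $\phi$ is surjective, hence an automorphism; its injectivity is equivalent to $\phi^{-1}(K) \subseteq K$, and combined with the reverse inclusion $K \subseteq \phi^{-1}(K)$ coming from $\phi(K) \subseteq K$, we obtain $\phi^{-1}(K) = K$. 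Therefore $\phi^{-1}$ also lies in $\mathcal{L}^\ast$, which completes the argument.
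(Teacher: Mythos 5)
Your proposal is correct, but it takes a genuinely different route from the paper's, at least for the Lagrangian part. For $J_k\mathcal{C}/Y_r$ the two arguments use the same ingredient --- the invariance of $\rho_k$ under $Y_r$-equivalence for $r\geq k$ --- just packaged differently: the paper exhibits the inverse of a class directly, while you identify $J_k\mathcal{C}/Y_r$ with $\ker(\overline{\rho_k})$. The real divergence is in the treatment of $J_k^L\mathcal{C}/Y_r$. The paper argues by induction on $k$: the base case is handled with $\rho_1$, and the inductive step uses the Johnson--Levine homomorphism $\tau^L_{k-1}$, its additivity, its invariance under $Y_r$-surgery, and the identity $\ker(\tau^L_{k-1})=J_k^L\mathcal{C}$ (mirroring Levine's proof that $J_k^L\mathcal{M}$ is a subgroup of $\mathcal{M}$). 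You instead observe that membership in $J_k^L\mathcal{C}$ is a condition on $\rho_k(M)$ alone --- being the identity on $A$ at the homological level and preserving $K=\mathbb{A}\cdot\Gamma_{k+1}\pi/\Gamma_{k+1}\pi$ --- and prove that the set $\mathcal{L}^*$ of such automorphisms is a subgroup of $\mathrm{Aut}(\pi/\Gamma_{k+1}\pi)$; the only delicate point, that $\phi(K)\subseteq K$ forces $\phi^{-1}(K)\subseteq K$, you get from the Hopfian property of the finitely generated nilpotent group $\pi'/\Gamma_{k+1}\pi'$, and this part of your argument is sound. What your approach buys is a purely group-theoretic proof that needs only the invariance of $\rho_k$ (not of $\tau^L_{k-1}$), avoids the induction, and yields the slightly stronger statement that $J_k^L\mathcal{C}/Y_r$ is the preimage of a subgroup under $\overline{\rho_k}$; what the paper's approach buys is that it runs entirely on machinery already set up (the homomorphisms $\tau^L_k$ and their kernels). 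In a final write-up you should spend a line justifying the identifications $K=\ker(\iota_{k+1})$ and $(\pi/\Gamma_{k+1}\pi)/K\cong\pi'/\Gamma_{k+1}\pi'$, both of which rest on the surjectivity of $\iota_{\#}$ provided by the split exact sequence \eqref{ecuacion3.10}, since they feed into the Hopfian step.
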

\begin{proof}
It is enough  to show that $J_k\mathcal{C}/Y_r$ and $J_k^L\mathcal{C}/Y_r$  are closed under inverses. Let $\{M\}\in J_k\mathcal{C}/Y_r$, then there exists $\{N\}\in \mathcal{C}/Y_r$ such that $\{N\}\{M\}=\{\Sigma\times\left[-1,1\right]\}$ in $\mathcal{C}/Y_r$. By the invariance of $\rho_k$ under $Y_r$-equivalence, we have 
$$\text{Id}_{\pi/\Gamma_{k+1}\pi}=\rho_k(N)\circ\rho_k(M)=\rho_k(N),$$
hence $\{N\}\in J_k\mathcal{C}/Y_r$. 

Now, let us show by induction on $k$ that $J^L_k\mathcal{C}/Y_r$ is closed under inverses. Suppose that $k=1$ and let $\{M\}\in J^L_1\mathcal{C}/Y_r$. Consider $\{N\}\in \mathcal{C}/Y_r$ such that $\{N\}\{M\}=\{\Sigma\times\left[-1,1\right]\}$ in $\mathcal{C}/Y_r$. By the invariance of $\rho_1$ under $Y_r$-equivalence, we have $\text{Id}_{H}=\rho_1(N)\circ\rho_1(M)$. Let $a\in A$, thus $\rho_1(N)(a)=\rho_1(N)(\rho_1(M)(a))=a$. Therefore $\{N\}\in J^L_1\mathcal{C}/Y_r$. Next, suppose that $k\geq 2$ and let $\{M\}\in J^L_k\mathcal{C}/Y_r$. Since $J^L_k\mathcal{C}\subseteq  J^L_{k-1}\mathcal{C}$, by induction there exists  $\{N\}\in J^L_{k-1}\mathcal{C}/Y_r$ such that $\{N\}\{M\}=\{\Sigma\times\left[-1,1\right]\}$ in $J^L_{k-1}\mathcal{C}/Y_r$. On the other hand $\tau^L_{k-1}(M)=0$, so we have
$$\tau^L_{k-1}(N)=\tau^L_{k-1}(N)+\tau^L_{k-1}(M)=0.$$
Hence $\{N\}\in J^L_k\mathcal{C}/Y_r$. 
\end{proof}

%_____________________________________________________________________________________________________
%---------------------------------------
%---------------------------------------
%---------------------------------------
%---------------------------------------
\subsection{Comparison of  the Johnson and Johnson-Levine filtrations}

Consider the handlebody $V$ as in subsection \ref{seccion3.1}, seeing it as a cobordism from $\Sigma$ to $\Sigma_{0,1}=D$, the fixed disk on $\partial V$,  see Figure \ref{figure4.20}.
\begin{figure}[H] 
										\centering
                        \includegraphics[scale=0.7]{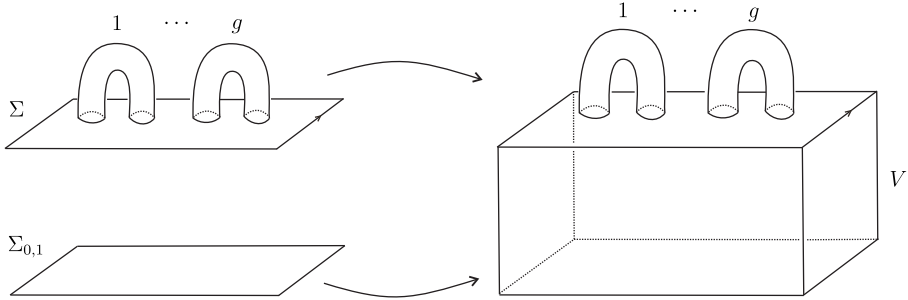}
												\caption{The handlebody  $V$ as a cobordism from $\Sigma$ to $\Sigma_{0,1}$.}
												\label{figure4.20}
\end{figure}

 Denote by $\mathcal{HC}$ the submonoid of $\mathcal{C}$ consisting of the cobordisms  $(M,m)$ such that $M\cup_{m_-}V$ is equal to $V$ as cobordisms. Notice that $\mathcal{HC}\cap \mathcal{M}$ is the handlebody group $\mathcal{H}$ defined in Remark \ref{handlebodygroup}.

\begin{lemma}\label{lemma4.10} We have the inclusion $\mathcal{HC}\cap \mathcal{ILC}\subseteq \bigcap_{k\geq1} J_k^L\mathcal{C}$.
\end{lemma}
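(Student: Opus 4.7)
The plan is to imitate the argument used in the mapping class group case (Remark \ref{handlebodygroup}), but the direct relation $h_{\#}(\mathbb{A})\subseteq\mathbb{A}$ is no longer available for cobordisms. The idea is to introduce an auxiliary homomorphism $\Phi:\pi_1(M)\to\pi'$ coming from the $\mathcal{HC}$ hypothesis, and use it to transport the vanishing condition from $\pi_1(M)$ to $\pi'/\Gamma_{k+1}\pi'$.

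Concretely, let $(M,m)\in\mathcal{HC}\cap\mathcal{ILC}$. The case $k=1$ is immediate, since $J_1^L\mathcal{C}=\mathcal{ILC}$. For $k\geq 2$, set $\widetilde{M}:=M\cup_{m_-}V$. By the definition of $\mathcal{HC}$, there is a homeomorphism of cobordisms $\varphi:\widetilde{M}\to V$. Writing $\iota_M:M\hookrightarrow\widetilde{M}$ and $\iota_V:V\hookrightarrow\widetilde{M}$ for the natural inclusions, compatibility of $\varphi$ with the cobordism structure gives $\varphi\circ\iota_M\circ m_+=\iota$, while the gluing relation yields $\iota_M\circ m_-=\iota_V\circ\iota$. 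Set
$$\Phi:=\varphi_{\#}\circ(\iota_M)_{\#}:\pi_1(M)\longrightarrow\pi',\qquad \Psi:=\varphi_{\#}\circ(\iota_V)_{\#}:\pi'\longrightarrow\pi'.$$
Then $\Phi\circ m_{+,\#}=\iota_{\#}$ and $\Phi\circ m_{-,\#}=\Psi\circ\iota_{\#}$.

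Given $\alpha\in\mathbb{A}$, pick $\beta\in\pi$ with $m_{-,\#}(\beta)\equiv m_{+,\#}(\alpha)\pmod{\Gamma_{k+1}\pi_1(M)}$; by definition of $\rho_k$ this means $\{\beta\}_{k+1}=\rho_k(M)(\{\alpha\}_{k+1})$. Applying $\Phi$ (which descends to nilpotent quotients) and using $\iota_{\#}(\alpha)=1$, one deduces
$$\Psi(\iota_{\#}(\beta))\equiv 1\pmod{\Gamma_{k+1}\pi'}.$$
The desired conclusion $\iota_{k+1}\rho_k(M)(\{\alpha\}_{k+1})=1$ then follows as soon as $\Psi$ induces an automorphism of $\pi'/\Gamma_{k+1}\pi'$.

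The main obstacle is precisely this last point. Since $\pi'$ is free, $H_2(\pi')=0$, and by Stallings' theorem \cite[Theorem 3.4]{MR0175956} it suffices to prove that $\Psi_*:H'\to H'$ is an isomorphism. A direct Mayer--Vietoris computation applied to the decomposition $\widetilde{M}=M\cup_\Sigma V$ yields the exact sequence
$$H_1(\Sigma)\xrightarrow{(m_{-,*},-\iota_*)} H_1(M)\oplus H'\longrightarrow H_1(\widetilde{M})\longrightarrow 0,$$
and identifying $H_1(M)$ with $H$ via the isomorphism $m_{-,*}$, one checks that the induced map $(\iota_V)_*:H'\to H_1(\widetilde{M})$, $y\mapsto(0,y)$, is already an isomorphism. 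Composing with the isomorphism $\varphi_*$ induced by the homeomorphism $\varphi$, one concludes that $\Psi_*$ is an isomorphism, which finishes the proof.
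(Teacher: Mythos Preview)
Your proof is correct but follows a genuinely different route from the paper's argument. The paper works geometrically: since $M\cup_{m_-}V\cong V$, each curve $m_+(\alpha_i)$ bounds a disk $D^i$ in the glued handlebody; by examining how the curves $m_-(\beta_j)$ meet $D^i$ transversally, one writes $m_{+,\#}(\alpha_i)$ directly in $\pi_1(M)$ as a product of conjugates of the classes $m_{-,\#}(\alpha_j)$, hence $\rho_k(M)(\{\alpha_i\})\in(\mathbb{A}\cdot\Gamma_{k+1}\pi)/\Gamma_{k+1}\pi$ for every $k$. This is a one-step topological argument using the normal generation of $\mathbb{A}$ by the $\alpha_i$.

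Your approach is instead homological/algebraic: you build the auxiliary maps $\Phi$ and $\Psi$, reduce the statement to showing that $\Psi$ is an automorphism on each nilpotent quotient, and then invoke Stallings' theorem together with a Mayer--Vietoris computation for $H_1(\widetilde{M})$. This avoids the disk/meridian picture entirely and has the advantage of being more portable (it would adapt to settings where the explicit meridian description is less transparent), at the cost of being slightly longer and requiring the extra input of Stallings' theorem. Both arguments use the $\mathcal{ILC}$ hypothesis only to ensure membership in $J_1^L\mathcal{C}$, which is part of the definition of $J_k^L\mathcal{C}$.
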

\begin{proof}
Consider the system of meridians and parallels  $\{\alpha_1,\ldots,\alpha_g,\beta_1,\ldots,\beta_g\}$ of $\Sigma$ and denote in the same way an induced system of generators of $\pi$. Notice that $\mathbb{A}=\text{ker}(\pi_1(\Sigma)\stackrel{\iota_{\#}}{\longrightarrow}\pi_1(V))$ is the normal closure  of $\{\alpha_1,\ldots,\alpha_g\}$. Let $(M,m)\in\mathcal{HC}\cap\mathcal{ILC}$. It is enough to show that, for all $1\leq i\leq g$ and for all $k\geq 1$, we have $\rho_k(M)(\{\alpha_i\})\in (\mathbb{A}\cdot\Gamma_{k+1}\pi)/\Gamma_{k+1}\pi$. Indeed, since $M\cup_{m_-}V\cong V$, the curve $m_+(\alpha_i)$ bounds a disk $D^i$ in $M\cup_{m_-}V$. Now, some of the curves $m_-(\beta_j)$ intersect $D^i$ in a transversal way. Hence $m_{+,\#}(\alpha_i)$ can be written as a product of homotopy classes of  meridians associated to those curves $m_-(\beta_j)$ intersecting $D^i$. Since all the meridians are conjugates, we conclude that $m_{+,\#}(\alpha_i)$ can be written  as a product of conjugates of the homotopy classes of  the curves  $m_{-}(\alpha_j)$.  Hence $\rho_k(M)(\{\alpha_i\})$ belongs to $(\mathbb{A}\cdot\Gamma_{k+1}\pi)/\Gamma_{k+1}\pi$. Therefore $M$ belongs to  $\bigcap_{k\geq1} J_k^L\mathcal{C}$.
\end{proof}

The other inclusion does not hold. To see this, consider any homology sphere $P$ not homeomorphic to $\mathbb{S}^3$.   The connected sum  $M=(\Sigma\times [-1,1])\# P$ is a homology cobordism, which by  construction, belongs to $\bigcap_{k\geq1} J_k^L\mathcal{C}$ and does not belong to $\mathcal{HC}$. This contrasts with the mapping class group case where the respective equality holds, see Remark \ref{handlebodygroup}.

\begin{proposition}\label{propdif} For all $k\geq 1$, we have $\text{\emph{ker}}\left(D_k(H)\rightarrow D_k(H')\right)=\tau_k\left(\mathcal{HC}\cap J_k\mathcal{C}\right)$.
\end{proposition}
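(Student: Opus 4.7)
The plan is to establish the two inclusions separately.

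The forward inclusion $\tau_k(\mathcal{HC}\cap J_k\mathcal{C})\subseteq \ker(\iota_*)$ follows immediately from the previous subsection. Since $J_k\mathcal{C}\subseteq\mathcal{IC}\subseteq\mathcal{ILC}$, we have $\mathcal{HC}\cap J_k\mathcal{C}\subseteq\mathcal{HC}\cap\mathcal{ILC}$; Lemma~\ref{lemma4.10} then gives $\mathcal{HC}\cap\mathcal{ILC}\subseteq\bigcap_{l\geq 1}J_l^L\mathcal{C}\subseteq J_{k+1}^L\mathcal{C}$, so that $\mathcal{HC}\cap J_k\mathcal{C}\subseteq J_k\mathcal{C}\cap J_{k+1}^L\mathcal{C}$, and Corollary~\ref{corollaryequ4.3} concludes.

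For the reverse inclusion, the strategy is to realize every element of $\ker(\iota_*)$ by $Y_k$-surgery on $\Sigma\times[-1,1]$ along a tree clasper chosen to lie trivially inside the handlebody $V$. First I set up the algebraic picture: the Lagrangian splitting $H=A\oplus B$ (with $B$ generated by $b_1,\ldots,b_g$) induces an $A$-letter grading on $H\otimes\mathfrak{L}_{k+1}(H)$ preserved by the Lie bracket, and hence a decomposition $D_k(H)=\bigoplus_{i\geq 0}D_k(H)^{(i)}$. The map $\iota_*$ annihilates the pieces with $i\geq 1$ and is an isomorphism on $D_k(H)^{(0)}$ (via $B\cong H'$), so $\ker(\iota_*)=\bigoplus_{i\geq 1}D_k(H)^{(i)}$. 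Since the tree realization map $\eta_k^{\mathbb{Z}}:\mathcal{T}_k(H)\to D_k(H)$ is surjective and grading-preserving, it suffices to realize each tree-like Jacobi diagram $T$ with at least one leg colored by some $a_i\in A$ as $\tau_k(M_T)$ for some $M_T\in\mathcal{HC}\cap J_k\mathcal{C}$; a general element of $\ker(\iota_*)$ is then realized by composition in the submonoid $\mathcal{HC}\cap J_k\mathcal{C}$ using additivity of $\tau_k$.

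Given such a $T$, I construct a tree clasper $G\subset\mathrm{int}(\Sigma\times[-1,1])$ whose combinatorial shape matches $T$, realizing each $a_i$-leg by a pushed-up copy of the meridian $\alpha_i$ and each $b_j$-leg by a pushed-up copy of the parallel $\beta_j$. By the standard clasper-surgery realization of Johnson homomorphisms (Habiro \cite{MR1735632}, Garoufalidis--Levine \cite{MR2131016}) the cobordism $M_T:=(\Sigma\times[-1,1])_G$ lies in $J_k\mathcal{C}$ with $\tau_k(M_T)=\eta_k^{\mathbb{Z}}(T)$. To force $M_T\in\mathcal{HC}$, I would place a distinguished $\alpha_i$-leaf of $G$ at a low level $\Sigma\times\{-1/2\}$, keeping the rest of $G$ in the upper layer $\Sigma\times[0,1]$. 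Viewed inside $V\cup_\iota(\Sigma\times[-1,1])=V$, this leaf bounds a meridian disk $D_i\subset V$ that can be pushed into $V\cup(\Sigma\times[-1,-1/2))$ and hence made disjoint from the remainder of $G$. The classical clasper-calculus lemma asserting that a tree clasper with a leaf bounding an embedded disk disjoint from the clasper has trivial surgery effect then yields $V_G=V$, so that $M_T\cup_{m_-}V=V_G=V$ and $M_T\in\mathcal{HC}\cap J_k\mathcal{C}$.

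The main obstacle is the geometric step of the last paragraph: the disk $D_i$ must be made disjoint not only from the other leaves of $G$ but also from all internal edges of $G$, and it is this simultaneous disjointness, together with the elimination lemma for claspers, that underpins the triviality of $G$ inside $V$ and therefore the $\mathcal{HC}$-membership of $M_T$. A secondary, more algebraic point requiring care is the integral (rather than merely rational) surjectivity of $\eta_k^{\mathbb{Z}}:\mathcal{T}_k(H)\to D_k(H)$ used to reduce from arbitrary elements of $\ker(\iota_*)\subseteq D_k(H)$ to tree-by-tree realization.
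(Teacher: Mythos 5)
Your forward inclusion is fine and matches the paper (via Lemma \ref{lemma4.10} together with Proposition \ref{proposition1seccion4}, or equivalently Corollary \ref{corollaryequ4.3}). For the reverse inclusion, however, there is a genuine gap at the algebraic reduction step, and it is not the ``secondary point'' you label it as: the integral map $\eta^{\mathbb{Z}}_k\colon\mathcal{T}_k(H)\to D_k(H)$ is \emph{not} surjective when $k=2j$ is even. By Levine's quasi-Lie computations (the exact sequence \eqref{equ3prop4.7}), the cokernel of $D^q_{2j}(H)\to D_{2j}(H)$ is $\mathfrak{L}_{j+1}(H)\otimes\mathbb{Z}/2$, and since $\eta^{\mathbb{Z}}_{2j}$ factors through $D^q_{2j}(H)$, the elements $\eta_{2j}\bigl(\tfrac{1}{2}\mathrm{tr}(u)\odot\mathrm{tr}(u)\bigr)$ are not integral combinations of tree classes. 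When $u$ is a Lie commutator containing some $a_i$, such an element lies in $\ker\bigl(D_{2j}(H)\to D_{2j}(H')\bigr)$ (this is part (ii) of Lemma \ref{lemma1prop4.7}) but maps to $u\otimes 1\neq 0$ under $p$, so it cannot be reached by your tree-by-tree clasper realization plus additivity. Hence in even internal degree your argument provably produces only a proper part of the kernel; to complete it you would need a separate realization of these ``half-square'' elements inside $\mathcal{HC}\cap J_{2j}\mathcal{C}$ (the paper does this through the Milnor--Johnson correspondence and Cochran's antidifferentiation, starting from a Whitehead link rather than a Hopf link; a clasper-only analogue would require something beyond tree claspers with simple leaves).

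Apart from this, your route is genuinely different from the paper's and the rest is plausible: instead of transporting the problem to string links via the Milnor--Johnson correspondence and realizing Milnor invariants by Cochran's construction (keeping the odd strands trivial so as to land in $\mathcal{HC}$), you work directly with tree claspers in $\Sigma\times[-1,1]$ and force membership in $\mathcal{HC}$ by arranging a distinguished $\alpha_i$-leaf to bound a disk in $V$ disjoint from the rest of the clasper, so that surgery is trivial after gluing $V$. That geometric step is a standard clasper-calculus lemma and can indeed be arranged by the level trick you describe (the disk, a pushed-in meridian disk capped with the annulus $\alpha_i\times[-1,-1/2]$, lies below the remainder of the clasper), and your $A$-letter grading argument correctly identifies $\ker(\iota_*)$ rationally; also note you should say a word about realizing both signs $\pm\eta^{\mathbb{Z}}_k(T)$ so that the image is a subgroup. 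But as it stands the even-degree case is not proved.
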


We postpone the proof of this proposition to subsection \ref{section4.4}.

\begin{lemma}\label{corolario2} For all $k,l\geq 1$, we have 
\begin{equation}\label{eq1cor}
\tau_k\left(\mathcal{HC}\cap J_k \mathcal{C}\right)=\tau_k\left(J_k\mathcal{C}\cap J_{k+1}^L\mathcal{C}\right),
\end{equation}
and 
\begin{equation}\label{eq2cor}
\frac{J_k\mathcal{C}\cap J_{k+1}^L\mathcal{C}}{Y_{k+1+l}}= \frac{J_{k+1}\mathcal{C}}{Y_{k+1+l}}\cdot q_{k+1+l}\left(\mathcal{HC}\cap J_k\mathcal{C}\right)
\end{equation}
in  $\mathcal{C}/Y_{k+1+l}$, where $q_{k+1+l}:\mathcal{C}\rightarrow\mathcal{C}/Y_{k+1+l}$ is the canonical projection.
\end{lemma}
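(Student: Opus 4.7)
The plan is to treat the two equations separately. For equation \eqref{eq1cor}, observe that Corollary \ref{corollaryequ4.3} already identifies the right-hand side $\tau_k(J_k\mathcal{C}\cap J_{k+1}^L\mathcal{C})$ with $\ker(\iota_*\colon D_k(H)\to D_k(H'))$, while the just-stated Proposition \ref{propdif} identifies the left-hand side $\tau_k(\mathcal{HC}\cap J_k\mathcal{C})$ with the same kernel. Hence \eqref{eq1cor} follows at once, with no additional work.

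For equation \eqref{eq2cor}, I would prove the two inclusions separately. The inclusion $\supseteq$ reduces to checking that both $J_{k+1}\mathcal{C}$ and $\mathcal{HC}\cap J_k\mathcal{C}$ already sit inside $J_k\mathcal{C}\cap J_{k+1}^L\mathcal{C}$. The first is clear from $J_{k+1}\mathcal{C}\subseteq J_{k+1}^L\mathcal{C}$, which is a direct consequence of Proposition \ref{proposition1seccion4} (the Johnson filtration refines the Johnson-Levine filtration, a fact one reads off the commutative square \eqref{ecuacion4.2}). For the second, note that $J_k\mathcal{C}\subseteq \mathcal{IC}\subseteq \mathcal{ILC}$, so $\mathcal{HC}\cap J_k\mathcal{C}\subseteq \mathcal{HC}\cap\mathcal{ILC}$, which by Lemma \ref{lemma4.10} is contained in $\bigcap_{m\geq 1}J_m^L\mathcal{C}$, in particular in $J_{k+1}^L\mathcal{C}$.

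The inclusion $\subseteq$ is the substantive one. I would work inside the group $\mathcal{C}/Y_{k+1+l}$. Because the $k$-th Johnson homomorphism is $Y_{k+1}$-invariant (and thus $Y_{k+1+l}$-invariant for any $l\geq 1$), it descends to a group homomorphism $\bar{\tau}_k\colon J_k\mathcal{C}/Y_{k+1+l}\longrightarrow D_k(H)$ whose kernel is exactly $J_{k+1}\mathcal{C}/Y_{k+1+l}$, since already in $J_k\mathcal{C}$ one has $\ker\tau_k=J_{k+1}\mathcal{C}$. Given a class $\{M\}\in (J_k\mathcal{C}\cap J_{k+1}^L\mathcal{C})/Y_{k+1+l}$, equation \eqref{eq1cor}, just established, furnishes some $N\in \mathcal{HC}\cap J_k\mathcal{C}$ with $\tau_k(N)=\tau_k(M)$. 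Then $\{M\}\{N\}^{-1}$ lies in $\ker\bar{\tau}_k=J_{k+1}\mathcal{C}/Y_{k+1+l}$, and the factorization
\[
\{M\}=\bigl(\{M\}\{N\}^{-1}\bigr)\cdot\{N\}
\]
exhibits $\{M\}$ as an element of $J_{k+1}\mathcal{C}/Y_{k+1+l}\cdot q_{k+1+l}(\mathcal{HC}\cap J_k\mathcal{C})$, as required.

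The main obstacle is not in the present lemma but rather in Proposition \ref{propdif}, whose proof is postponed to subsection \ref{section4.4}; once that description of $\ker(\iota_*)$ in terms of $\mathcal{HC}$ is available, the present lemma is essentially formal. A minor point one has to record along the way is that $\tau_k$ sends the group-inverse in $\mathcal{C}/Y_{k+1+l}$ to the additive inverse in $D_k(H)$, but this is immediate from the fact that $\tau_k$ is a monoid homomorphism into an abelian group.
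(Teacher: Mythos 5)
Your proposal is correct and follows essentially the same route as the paper: equality \eqref{eq1cor} by combining Proposition \ref{propdif} with Corollary \ref{corollaryequ4.3}, the inclusion $\supseteq$ in \eqref{eq2cor} via Lemma \ref{lemma4.10}, and the inclusion $\subseteq$ by picking $U\in\mathcal{HC}\cap J_k\mathcal{C}$ with $\tau_k(U)=\tau_k(M)$ and factoring $\{M\}=\bigl(\{M\}\{U\}^{-1}\bigr)\{U\}$ inside $\mathcal{C}/Y_{k+1+l}$, the first factor landing in $J_{k+1}\mathcal{C}/Y_{k+1+l}$ by $Y$-invariance of $\tau_k$. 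Your write-up merely makes explicit a few points the paper leaves implicit (the membership of the product in $J_k\mathcal{C}\cap J_{k+1}^L\mathcal{C}$ for $\supseteq$, and the identification of $\ker\bar{\tau}_k$).
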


\begin{proof}
Equality (\ref{eq1cor}) follows from Proposition \ref{propdif} and Corollary \ref{corollaryequ4.3}. Let us show  equality (\ref{eq2cor}). The inclusion ``$\supseteq$" follows from Lemma \ref{lemma4.10}. Let $M\in J_k\mathcal{C}\cap J_{k+1}^L\mathcal{C}$, thus by   (\ref{eq1cor}), $\tau_k(M)=\tau_k(U)$ for some element $U\in \mathcal{HC}\cap J_k\mathcal{C}$. But we can consider the inverse of $\{U\}$ in $\mathcal{C}/Y_{k+1+l}$, then $\{M\}\{U\}^{-1}\in \text{ker}(J_k\mathcal{C}/Y_{k+1+l}\xrightarrow{\tau_k} D_k(H))$. Thus $\{M\}=\{X\}\{U\}$ with $X\in\text{ker}(\tau_k)=J_{k+1}\mathcal{C}$.
\end{proof}

In \cite[Proposition 6.1]{MR2265877}, J. Levine showed that  $J_k^L\mathcal{M}=J_k\mathcal{M}\cdot\left(\mathcal{H}\cap\mathcal{IL}\right)$ for $k=1,2$ and he asked if this holds for all $k$. In the  case of homology cobordisms we have the following result.

\begin{theorem}\label{teorema1} For all $k,l\geq 1$,
\begin{equation}\label{equthm1}
\frac{J_k^L\mathcal{C}}{Y_{k+l}}=\frac{J_k\mathcal{C}}{Y_{k+l}}\cdot q_{k+l}\left(\mathcal{HC}\cap \mathcal{ILC}\right),
\end{equation}
where $q_{k+l}:\mathcal{C}\rightarrow\mathcal{C}/Y_{k+l}$ is the canonical projection.
\end{theorem}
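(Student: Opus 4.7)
The plan is induction on $k \geq 1$, uniform in $l \geq 1$. The inclusion ``$\supseteq$'' is immediate: by Proposition \ref{proposition1seccion4} one has $J_k\mathcal{C}\subseteq J_k^L\mathcal{C}$, and by Lemma \ref{lemma4.10} one has $\mathcal{HC}\cap\mathcal{ILC}\subseteq \bigcap_r J_r^L\mathcal{C}$; since $J_k^L\mathcal{C}/Y_{k+l}$ is a subgroup of $\mathcal{C}/Y_{k+l}$ (as shown in Section \ref{seccion4.2}), the set-theoretic product on the right sits inside it.

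For the inclusion ``$\subseteq$'' I begin with the base case $k=1$, which is the substantive point. Here the claim reads $\mathcal{ILC}/Y_{1+l} = \mathcal{IC}/Y_{1+l}\cdot q_{1+l}(\mathcal{HC}\cap \mathcal{ILC})$. By Lemma \ref{lemmalinking}, any $M\in\mathcal{ILC}$ has linking matrix $\bigl(\begin{smallmatrix} 0 & \text{Id}_g \\ \text{Id}_g & \Delta(M)\end{smallmatrix}\bigr)$ with $\Delta(M)$ symmetric, and $M\in\mathcal{IC}$ if and only if $\Delta(M)=0$. Unpacking the proof of Lemma \ref{lemmalinking}, one identifies $\Delta(M)$ with the action of $\rho_1(M)$ on the symplectic basis, namely $\rho_1(M)(b_j)=b_j-\sum_k \Delta(M)_{kj}\,a_k$. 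Since $\rho_1\colon \mathcal{C}\to \text{Aut}(H)$ is a monoid homomorphism and $\rho_1(M)|_A=\text{Id}_A$ whenever $M\in\mathcal{ILC}$, this formula forces the additivity $\Delta(M_1M_2)=\Delta(M_1)+\Delta(M_2)$. Since $Y_{1+l}$-equivalence with $l\geq 1$ preserves linking matrices, $\Delta$ descends to a group homomorphism $\mathcal{ILC}/Y_{1+l}\to \text{Sym}_g(\mathbb{Z})$ with kernel $\mathcal{IC}/Y_{1+l}$.

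Surjectivity of $\Delta|_{\mathcal{HC}\cap\mathcal{ILC}}$ onto $\text{Sym}_g(\mathbb{Z})$ is then established by an explicit construction via mapping cylinders. For any simple closed curve $c$ on $\Sigma$ which bounds a disk in $V$ (so $[c]\in A$), the Dehn twist $T_c$ lies in $\mathcal{H}$, and since $A$ is Lagrangian one has $T_c|_A=\text{Id}_A$, hence $T_c\in\mathcal{H}\cap\mathcal{IL}\subseteq \mathcal{HC}\cap\mathcal{ILC}$. A direct computation of $\rho_1$ on the mapping cylinder of $T_c$ gives $\Delta(M_{T_c})_{ij}=-[c]_i[c]_j$ in the basis $\{a_1,\dots,a_g\}$ of $A$. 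Taking $c$ homologous to $\alpha_i$ yields $-E_{ii}$, and taking $c$ homologous to a band-sum of $\alpha_i$ and $\alpha_j$ yields, after subtracting the diagonal contributions via additivity, $-(E_{ij}+E_{ji})$; these generate $\text{Sym}_g(\mathbb{Z})$. Given $M\in\mathcal{ILC}$, I choose $U\in\mathcal{HC}\cap\mathcal{ILC}$ with $\Delta(U)=\Delta(M)$; then $\{M\}\{U\}^{-1}$ lies in $\ker(\Delta)=\mathcal{IC}/Y_{1+l}$, producing the factorization $\{M\}=\{N\}\{U\}$.

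For the induction step, assume the theorem at level $k$ and let $M\in J_{k+1}^L\mathcal{C}\subseteq J_k^L\mathcal{C}$. Applying the inductive hypothesis with $l$ replaced by $l+1$, I write $\{M\}=\{N'\}\{U\}$ in $\mathcal{C}/Y_{k+1+l}$ with $N'\in J_k\mathcal{C}$ and $U\in\mathcal{HC}\cap\mathcal{ILC}$. Both $\{M\}$ and $\{U\}$ lie in the subgroup $J_{k+1}^L\mathcal{C}/Y_{k+1+l}$ (the latter by Lemma \ref{lemma4.10}), so does $\{N'\}$, and the $Y_{k+1}$-invariance of $\tau_k^L$ yields $\tau_k^L(N')=0$; thus $N'\in J_k\mathcal{C}\cap J_{k+1}^L\mathcal{C}$. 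Lemma \ref{corolario2}, equation (\ref{eq2cor}), then provides $\{N'\}=\{X\}\{U'\}$ with $X\in J_{k+1}\mathcal{C}$ and $U'\in\mathcal{HC}\cap J_k\mathcal{C}\subseteq \mathcal{HC}\cap\mathcal{ILC}$, whence $\{M\}=\{X\}\cdot q_{k+1+l}(U'\cdot U)$, closing the induction. The main obstacle is the base case, specifically showing that $\Delta$ restricted to $\mathcal{HC}\cap\mathcal{ILC}$ hits every symmetric integer matrix; the induction step itself is essentially a bookkeeping combination of Lemma \ref{lemma4.10} and Lemma \ref{corolario2}.
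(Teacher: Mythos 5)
Your proof is correct and takes essentially the same route as the paper: the same induction on $k$ (applying the level-$k$ statement with $l$ shifted by one), the same use of Lemma \ref{lemma4.10}, the $Y$-invariance of $\tau_k^L$, and Lemma \ref{corolario2} in the inductive step, with the base case reduced to realizing the homological action of an element of $\mathcal{ILC}$ by an element of $\mathcal{H}\cap\mathcal{IL}$. The only deviation is that the paper obtains this realization by citing Levine's result (\cite[Lemma 6.3]{MR2265877}, i.e.\ realizing every matrix $\left(\begin{smallmatrix} \text{Id}_g & \Lambda \\ 0 & \text{Id}_g\end{smallmatrix}\right)$), whereas you re-prove it directly via Dehn twists along curves bounding disks in $V$ and the additivity of the block $\Delta$ — a valid, self-contained substitute for that citation.
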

\begin{proof}
By  Lemma \ref{lemma4.10}, $\left(J_k\mathcal{C}/Y_{k+l}\right)\cdot q_{k+l}\left(\mathcal{HC}\cap \mathcal{ILC}\right)$ is contained in $J_k^L\mathcal{C}/Y_{k+l}$  . Let us show the other inclusion by induction on $k$.
The argument for the case  $k=1$ is similar to the one used by J. Levine in \cite[Proposition 6.1]{MR2265877}. Indeed, let $M\in \mathcal{ILC}/Y_{1+l}$ with $\rho_1(M)\in\text{Sp}(H)$.  Identify $\text{Sp}(H)$ with $\text{Sp}(2g,\mathbb{Z})$ as in subsection \ref{seccion3.1} . Now, every matrix $\left( \begin{smallmatrix} \text{Id}_g & \Lambda\\ 0 & \text{Id}_g \end{smallmatrix} \right)$ in $\text{Sp}(H)$ can be realized as the image by $\rho_1$ of an element in $\mathcal{H}\cap\mathcal{IL}$, see  \cite[Lemma 6.3]{MR2265877}. Let $P\in \mathcal{H}\cap\mathcal{IL}$  that  realizes the matrix  $\rho_1(M)$ and  consider the inverse $\{P\}^{-1}$ of $\{P\}$ in $\mathcal{C}/Y_{1+l}$ (this is actually the class of  the inverse of $P$  in $\mathcal{M}$). Hence $\{M\}\{P\}^{-1}$ acts trivially in homology, that is, $\{M\}\{P\}^{-1}=\{N\}\in\mathcal{IC}=J_1\mathcal{C}$. Therefore 
$$\{M\}=\{N\}\{P\}\in \frac{J_1\mathcal{C}}{Y_{1+l}}\cdot q_{1+l}\left(\mathcal{HC}\cap \mathcal{ILC}\right).$$ 

\noindent Suppose that the inclusion ``$\subseteq$" in (\ref{equthm1})  is true for $k$. Thus we have 
\begin{equation}\label{ecuacion4.11}%%%%%%%%%%%%%%%%%%%%%%%%%%%%%%%%%%%%%%%%%%%%%
\frac{J_{k+1}^L\mathcal{C}}{Y_{k+1+l}}\subseteq \frac{J_k^L\mathcal{C}}{Y_{k+1+l}}\subseteq \frac{J_k\mathcal{C}}{Y_{k+1+l}}\cdot q_{k+1+l}\left(\mathcal{HC}\cap \mathcal{ILC}\right).
\end{equation}
Let $M\in J_{k+1}^L\mathcal{C}$. By  the above inclusion we can write $\{M\}=\{N\}\{P\}$ with $N\in J_k\mathcal{C}$ and $P\in \mathcal{HC}\cap\mathcal{ILC}$. Notice that $\tau_k^L(P)=0$ by Lemma \ref{lemma4.10}. Since $\tau_k^L$ is invariant under $Y_{k+1+l}$-surgery (see  subsection \ref{seccion4.2}), we have 
 $$0=\tau_k^L(M)=\tau_k^L(N)+\tau_k^L(P)=\tau_k^L(N),$$
 therefore $N\in \text{ker}(\tau_k^L)=J_{k+1}^L\mathcal{C}$. Hence $N\in J_k\mathcal{C}\cap J_{k+1}^L\mathcal{C}$.

\noindent From equality (\ref{eq2cor}) in Lemma \ref{corolario2}, it follows  that $\{N\}\in \left(J_{k+1}\mathcal{C}/Y_{k+1+l}\right)\cdot q_{k+1+l}\left(\mathcal{HC}\cap J_k\mathcal{C}\right)$. Hence
\begin{align*}
\{M\}=\{N\}\{P\}&\in \left(\frac{J_{k+1}\mathcal{C}}{Y_{k+1+l}}\cdot q_{k+1+l}\left(\mathcal{HC}\cap J_k\mathcal{C}\right)\right)\cdot q_{k+1+l}\left(\mathcal{HC\cap\mathcal{ILC}}\right)\\
								 &\subseteq \frac{J_{k+1}\mathcal{C}}{Y_{k+1+l}}\cdot q_{k+1+l}\left(\mathcal{HC}\cap \mathcal{ILC}\right),
\end{align*}
which completes the proof.
\end{proof}

\subsection{Proof of Proposition \ref{propdif}}\label{section4.4}
Let us start by reviewing some preliminaries, we follow  \cite{MR2240921}. Consider the \emph{free quasi-Lie algebra} 
$$\mathfrak{L}^q(H)=\bigoplus_{k\geq 1}\mathfrak{L}^q_{k}(H)$$
generated by the $\mathbb{Z}$-module $H$, that is, instead of the relation $[x,x]=0$ in $\mathfrak{L}(H)$ we have the antisymmetry relation $[x,y]+[y,x]=0$.
Let $D^q_k(H)$ denote the kernel of the quasi-Lie bracket map $\left[\ ,\ \right]: H\otimes\mathfrak{L}^q_{k+1}(H)\rightarrow\mathfrak{L}^q_{k+2}(H)$. There is a canonical map  $\mathfrak{L}^q(H)\rightarrow\mathfrak{L}(H)$, which induces a homomorphism $D^q(H)=\bigoplus_{k\geq1} D^q_k(H)\rightarrow D(H)=\bigoplus_{k\geq1}D_k(H)$.

We can define a homomorphism
\begin{equation}\label{equ1prop4.7}
\eta^q_k:\mathcal{T}_k(H)\longrightarrow D_k^q(H) 
\end{equation}
in the same way that we defined the homomorphism $\eta^{\mathbb{Z}}_k:\mathcal{T}_k(H)\rightarrow D_k(H)$ in subsection \ref{subsection2.5}:  the composition of $\eta^q_k$ with the canonical map $D^q_k(H)\rightarrow D_k(H)$ is exactly $\eta^{\mathbb{Z}}_k$. Recall that we denote by $\eta_k:\mathcal{T}_k(H)\otimes\mathbb{Q}\rightarrow D_k(H)\otimes\mathbb{Q}$ the rationalization of $\eta^{\mathbb{Z}}_k$.  J. Levine carried in \cite{MR2240921} a detailed study of the homomorphism $\eta^q_k$. In particular, he obtained (\cite[Corollary 2.3]{MR2240921}) for  all $j\geq 1$ the following short exact sequences
\begin{equation}\label{equ2prop4.7}
0\longrightarrow H\otimes\mathfrak{L}_j(H)\otimes \mathbb{Z}/2\stackrel{s}{\longrightarrow} D^q_{2j-1}(H)\longrightarrow D_{2j-1}(H)\longrightarrow 0
\end{equation}
where $s(h\otimes u\otimes 1)=h\otimes [u,u]$ for $h\in H$ and $u\in \mathfrak{L}_j(H)$, and
\begin{equation}\label{equ3prop4.7}
0\longrightarrow D^q_{2j}(H)\longrightarrow D_{2j}(H)\stackrel{p}{\longrightarrow}\mathfrak{L}_{j+1}(H)\otimes\mathbb{Z}/2\longrightarrow 0.
\end{equation}
To describe the map $p$ in (\ref{equ3prop4.7}) let us first recall from \cite[Remark 2.4]{MR2240921} some elements of $D_{2j}(H)$ which do not come from $D^q_{2j}(H)$. Let $u\in\mathfrak{L}_{j+1}(H)$ and denote by $\text{tr}(u)$ the associated rooted tree. Let $\text{tr}(u)\odot\text{tr}(u)$ be the  Jacobi diagram obtained by joining the roots of two copies of $\text{tr}(u)$. The element $\eta_{2j}(\frac{1}{2}\text{tr}(u)\odot\text{tr}(u))$ belongs to $D_{2j}(H)$ and does not belong to $D^{q}_{2j}(H)$. The map $p$ sends $\eta_{2j}(\frac{1}{2}\text{tr}(u)\odot\text{tr}(u))$  to $u\otimes 1$.

J. Levine also  proved  \cite[Theorem 1]{MR1943338} that the map $\eta^q_k$ is surjective and that $(k+2)\text{ker}(\eta^q_k)=0$. (These results imply, in particular, that $\eta_k$ is an isomorphism, as we recalled at the end of subsection \ref{subsection2.5}). From  the exact sequences (\ref{equ2prop4.7}) and (\ref{equ3prop4.7}) together with the surjectivity of $\eta^q_k$ we deduce the following.

\begin{corollary}\label{corollary1prop4.7} For all $j\geq 1$, 
\begin{enumerate}
\item[\emph{(\emph{i})}] $D_{2j-1}(H)$ is generated by the elements $\eta^{\mathbb{Z}}_{2j-1}(v)$ with $v\in \mathcal{T}_{2j-1}(H)$.
\item[\emph{(\emph{ii})}] $D_{2j}(H)$ is generated by the elements $\eta^{\mathbb{Z}}_{2j}(v)$ with $v\in \mathcal{T}_{2j}(H)$ and
$\eta_{2j}(\frac{1}{2}\text{\emph{tr}}(u)\odot\text{\emph{tr}}(u))$ with $u\in\mathfrak{L}_{j+1}(H)$.
\end{enumerate}
\end{corollary}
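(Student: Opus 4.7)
The plan is to deduce the corollary as a short diagram chase from the two ingredients the author has just recorded: Levine's surjectivity theorem $\eta^q_k:\mathcal{T}_k(H)\twoheadrightarrow D^q_k(H)$ and the two short exact sequences (\ref{equ2prop4.7}) and (\ref{equ3prop4.7}). The starting observation is that $\eta^{\mathbb{Z}}_k$ factors as the composition $\mathcal{T}_k(H)\xrightarrow{\eta^q_k} D^q_k(H)\xrightarrow{c_k} D_k(H)$, where $c_k$ is the canonical map induced by $\mathfrak{L}^q(H)\to\mathfrak{L}(H)$. Hence the image of $\eta^{\mathbb{Z}}_k$ equals the image of $c_k$, and the task reduces to understanding the cokernel of $c_k$ in each parity.

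For part (i), the sequence (\ref{equ2prop4.7}) says that $c_{2j-1}:D^q_{2j-1}(H)\to D_{2j-1}(H)$ is surjective. Composing the surjection $\eta^q_{2j-1}$ with the surjection $c_{2j-1}$ yields that $\eta^{\mathbb{Z}}_{2j-1}:\mathcal{T}_{2j-1}(H)\to D_{2j-1}(H)$ is surjective, which is exactly statement (i).

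For part (ii), the sequence (\ref{equ3prop4.7}) identifies the image of $c_{2j}$ with $\ker(p)$, so the same composition argument only gives $\eta^{\mathbb{Z}}_{2j}(\mathcal{T}_{2j}(H))=\ker(p)\subseteq D_{2j}(H)$. To generate the rest of $D_{2j}(H)$ one must hit every class of $\mathfrak{L}_{j+1}(H)\otimes\mathbb{Z}/2$ under $p$. Here I use the explicit description of $p$ recalled by the author: $p\bigl(\eta_{2j}(\tfrac{1}{2}\mathrm{tr}(u)\odot\mathrm{tr}(u))\bigr)=u\otimes 1$ for $u\in\mathfrak{L}_{j+1}(H)$. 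As $u$ ranges over $\mathfrak{L}_{j+1}(H)$, the classes $u\otimes 1$ generate $\mathfrak{L}_{j+1}(H)\otimes\mathbb{Z}/2$, so these elements together with the tree-image $\ker(p)$ generate all of $D_{2j}(H)$. A one-line lift argument (pick any $x\in D_{2j}(H)$, subtract a combination of the $\eta_{2j}(\tfrac{1}{2}\mathrm{tr}(u)\odot\mathrm{tr}(u))$ matching $p(x)$, and apply (\ref{equ3prop4.7}) to the difference) formalises this.

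There is no real obstacle: both parts are immediate once one has the surjectivity of $\eta^q_k$ and the two exact sequences in hand. The only point that requires care is verifying the formula $p\bigl(\eta_{2j}(\tfrac{1}{2}\mathrm{tr}(u)\odot\mathrm{tr}(u))\bigr)=u\otimes 1$, but this is already quoted from \cite{MR2240921} in the discussion preceding the corollary and is the intended content of the parenthetical Remark 2.4 referenced there.
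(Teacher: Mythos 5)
Your proposal is correct and is exactly the deduction the paper intends: the corollary is stated as an immediate consequence of the surjectivity of $\eta^q_k$ together with the exact sequences (\ref{equ2prop4.7}) and (\ref{equ3prop4.7}), and your factorization $\eta^{\mathbb{Z}}_k=c_k\circ\eta^q_k$ plus the lift argument via $p\bigl(\eta_{2j}(\tfrac{1}{2}\mathrm{tr}(u)\odot\mathrm{tr}(u))\bigr)=u\otimes 1$ is precisely that argument, spelled out. No gaps.
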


\begin{lemma}\label{lemma1prop4.7}
Let $S=\{a_1,\ldots,a_g,b_1,\ldots, b_g\}$ be the fixed symplectic basis of $H$. For all $j\geq 1$,
\begin{enumerate}
\item[\emph{(\emph{i})}] $\text{\emph{ker}}(D_{2j-1}(H)\rightarrow D_{2j-1}(H'))$ is generated by the elements $\eta^{\mathbb{Z}}_{2j-1}(v)$ with $v$ a tree-like Jacobi diagram with legs colored by $S$ and at least one leg colored by some $a_i$.
\item[\emph{(\emph{ii})}] $\text{\emph{ker}}\left(D_{2j}(H)\rightarrow D_{2j}(H')\right)$ is generated by the elements $\eta^{\mathbb{Z}}_{2j}(v)$ with $v$ a tree-like Jacobi diagram with legs colored by $S$, with at least one leg colored by some $a_i$; and the elements $\eta_{2j}(\frac{1}{2}\text{\emph{tr}}(u)\odot\text{\emph{tr}}(u))$ with $u\in\mathfrak{L}_{j+1}(H)$ a Lie commutator which  has at least one $a_i$ as one of its components.
\end{enumerate}
\end{lemma}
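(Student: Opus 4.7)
My plan is to prove both parts simultaneously by first establishing the statement's analog for the quasi-Lie kernel $\ker(\iota_*\colon D^q_k(H)\to D^q_k(H'))$, and then transferring it to $D_k(H)$ via Levine's exact sequences~(\ref{equ2prop4.7}) and~(\ref{equ3prop4.7}).

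For the quasi-Lie analog, I would exploit the splitting $H=A\oplus B$ with $B=\langle b_1,\dots,b_g\rangle$. This induces a direct sum decomposition $\mathfrak{L}^q_k(H)=\mathfrak{L}^{q,!A}_k(H)\oplus\mathfrak{L}^{q,A}_k(H)$, where $\mathfrak{L}^{q,!A}_k(H)\cong \mathfrak{L}^q_k(B)$ consists of commutators with no $a$-component and $\mathfrak{L}^{q,A}_k(H)$ is the kernel of $\mathfrak{L}^q_k(H)\to\mathfrak{L}^q_k(H')$. The tensor product $H\otimes\mathfrak{L}^q_{k+1}(H)$ then splits into four summands, and a direct inspection shows that the bracket preserves the $A$-content grading: only $B\otimes\mathfrak{L}^{q,!A}_{k+1}(H)$ lands in $\mathfrak{L}^{q,!A}_{k+2}(H)$, while each of the other three summands lands in $\mathfrak{L}^{q,A}_{k+2}(H)$. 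Consequently $D^q_k(H)$ decomposes canonically as $\ker(\iota_*|_{D^q_k(H)})\oplus M$, with $M\xrightarrow{\iota_*}D^q_k(H')$ an isomorphism. The parallel decomposition $\mathcal{T}_k(H)=K_k\oplus L_k$ by $a$-leg content is preserved by $\eta^q_k$, so since $\eta^q_k$ is surjective by Levine~\cite{MR1943338}, the restriction $\eta^q_k\colon K_k\to\ker(\iota_*|_{D^q_k(H)})$ is itself surjective.

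For part~(i), I would apply the snake lemma to the commutative diagram obtained by mapping~(\ref{equ2prop4.7}) for $H$ to its analog for $H'$ via $\iota_*$. All three vertical maps are surjective (Lemma~\ref{lema4.1}, the surjectivity of $H\twoheadrightarrow H'$, and the quasi-Lie step just established), so a short exact sequence of kernels results and in particular $\ker(\iota_*|_{D_{2j-1}(H)})$ is the image of $\ker(\iota_*|_{D^q_{2j-1}(H)})$. The latter is generated by $\eta^q_{2j-1}(K_{2j-1})$ by the previous paragraph, and its image in $D_{2j-1}(H)$ is precisely $\eta^{\mathbb{Z}}_{2j-1}(K_{2j-1})$, the generating set claimed in (i). For part~(ii), I would apply the snake lemma to~(\ref{equ3prop4.7}) to obtain
\[
0 \to \ker(\iota_*|_{D^q_{2j}(H)}) \to \ker(\iota_*|_{D_{2j}(H)}) \to \ker(\iota_*|_{\mathfrak{L}_{j+1}(H)\otimes\mathbb{Z}/2}) \to 0.
\]
The left term is generated by $\eta^{\mathbb{Z}}_{2j}(K_{2j})$, as before. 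The right term is the $\mathbb{Z}/2$-span of $u\otimes 1$ for Lie commutators $u$ containing at least one $a_i$; for each such $u$, the element $\eta_{2j}\bigl(\tfrac12\mathrm{tr}(u)\odot\mathrm{tr}(u)\bigr)$ lies in $D_{2j}(H)$, belongs to $\ker\iota_*$ because $\iota_*(u)=0$ in $\mathfrak{L}_{j+1}(H')$, and maps to $u\otimes 1$ under~$p$. Splicing these lifts with the left-term generators yields~(ii).

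The technical heart of the argument is the bigraded decomposition of $D^q_k(H)$ coming from $H=A\oplus B$: one needs to check carefully that the bracket respects the $A$-content grading so that the decomposition of $H\otimes\mathfrak{L}^q_{k+1}(H)$ descends to a genuine direct sum decomposition of $D^q_k(H)$. This relies on $\mathfrak{L}^q_k(H)$ being $\mathbb{Z}$-free (guaranteed by a Hall-type basis), so that the splitting is honest and not merely a filtration. Once this structural point is secured, the remaining steps---two snake lemma applications plus the verification that $\tfrac12\mathrm{tr}(u)\odot\mathrm{tr}(u)$ provides the required preimage---are essentially formal.
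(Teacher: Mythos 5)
Your proof is correct, but it takes a genuinely different route from the paper's. The paper argues rationally and directly: given $x$ in the kernel, it expresses $x$ in terms of the generators of $D_k(H)$ furnished by Corollary \ref{corollary1prop4.7}, lifts that expression to $y\in\mathcal{T}_k(H)\otimes\mathbb{Q}$, uses the commutative square (\ref{equ7prop4.7}) together with the fact that $\eta_k$ is an isomorphism to conclude $\varphi(y)=0$, and then notes that $\varphi$ fixes the diagrams whose legs are colored only by the $b_i$'s, so those terms must cancel by AS/IHX and what survives is exactly a combination of the claimed generators. You instead work integrally one level up: the splitting $H=A\oplus B$ yields a bracket-compatible direct-sum decomposition of $H\otimes\mathfrak{L}^q_{k+1}(H)$, hence $D^q_k(H)=\ker\bigl(\iota_*|_{D^q_k(H)}\bigr)\oplus D^q_k(B)$, Levine's surjectivity of $\eta^q_k$ identifies the quasi-Lie kernel with $\eta^q_k(K_k)$, and two snake-lemma applications to (\ref{equ2prop4.7}) and (\ref{equ3prop4.7}) transfer this to $D_k(H)$, the elements $\eta_{2j}\bigl(\tfrac12\mathrm{tr}(u)\odot\mathrm{tr}(u)\bigr)$ with $u$ containing some $a_i$ supplying the lifts of the right-hand kernel in the even case. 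Your route is more structural (the kernel is exhibited as an honest direct summand at the quasi-Lie level, with no tensoring by $\mathbb{Q}$), while the paper's is shorter because Corollary \ref{corollary1prop4.7} already packages what it needs from Levine's results. Two small remarks: the commutativity of your two ladders over $\iota_*$, in particular $p'\circ\iota_*=(\iota_*\otimes\mathrm{id})\circ p$, is left implicit and is not literally part of the statement of (\ref{equ3prop4.7}); it should be verified, e.g.\ on the generators of Corollary \ref{corollary1prop4.7}, where both composites kill the image of $D^q_{2j}(H)$ and agree on the elements $\eta_{2j}\bigl(\tfrac12\mathrm{tr}(u)\odot\mathrm{tr}(u)\bigr)$ because the target groups are torsion-free — a verification to add, not a gap. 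Also, the direct-sum splitting of $\mathfrak{L}^q_k(H)$ does not actually require a Hall-type freeness argument: the retraction induced by $B\hookrightarrow H\twoheadrightarrow B$ already splits the inclusion $\mathfrak{L}^q_k(B)\hookrightarrow\mathfrak{L}^q_k(H)$.
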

\begin{proof} Let $k\geq 1$ and let $x\in\text{ker}(D_k(H)\rightarrow D_k(H'))$. By Corollary \ref{corollary1prop4.7}, we have
\begin{equation}\label{equ6prop4.7}
x=\sum_i\eta^{\mathbb{Z}}_k(v_i)+\sum_l\eta_k\left(\frac{1}{2}\text{tr}(u_l)\odot \text{tr}(u_l)\right)
\end{equation}
with $v_i\in\mathcal{T}_k(H)$, and $u_l\in\mathfrak{L}_{j+1}(H)$ if $k=2j$. Notice that if $k$ is odd, the second sum in equation (\ref{equ6prop4.7}) does not appear. By the linearity relation  we can suppose that all the $v_i$'s have legs colored by $S$ and that all the $u_l$'s are Lie commutators on $S$.  Let $y=\sum_i v_i+\sum_l\frac{1}{2}\text{tr}(u_l)\odot\text{tr}(u_l)\in\mathcal{T}_k(H)\otimes\mathbb{Q}$ and consider the commutative diagram
\begin{equation}\label{equ7prop4.7}
\xymatrix{  \mathcal{T}_k(H)\otimes\mathbb{Q}\ar[r]^-{\eta_k}_-{\cong}\ar[d]_-{\varphi} &D_k(H)\otimes\mathbb{Q} \ar[d]^-{\varphi'}\\
					\mathcal{T}_k(H')\otimes\mathbb{Q}\ar[r]_-{\eta_k}^-{\cong}&D_k(H')\otimes\mathbb{Q}}
\end{equation}
where $\varphi$ and $\varphi'$ are induced by the homomorphism $\iota_*:H\rightarrow H'$. We have that $\varphi'\eta_k(y)=0$, so $\eta_k\varphi(y)=0$. Since $\eta_k$ is an isomorphism, $\varphi(y)=0$. Let us write
 $$y=\sum_i v_i+\sum_l\frac{1}{2}\text{tr}(u_l)\odot\text{tr}(u_l)=y'+y'',$$
such that  all the diagrams appearing in $y'$ have at least one leg colored by some $a_i$, and   all the diagrams appearing in $y''$ have legs colored only by $\{b_1,\ldots, b_g\}$. Hence
$$0=\varphi(y)=\varphi(y')+\varphi(y'')=\varphi(y'').$$
Now, $\varphi(y'')=y''$ because all terms of $y''$ only have legs colored by $\{b_1,\ldots,b_g\}$. Thus $y''=0$, so $y=y'$. In other words, the diagrams appearing in $y$ whose legs are colored only by $\{b_1,\ldots,b_g\}$ can be grouped and they cancel out by  IHX and antisymmetry relations.
\end{proof}

Let us now turn  to the  proof of Proposition \ref{propdif}. Recall that we want to show that 
 $$\text{ker}(D_k(H)\stackrel{\iota_*}{\rightarrow} D_k(H'))=\tau_k(\mathcal{HC}\cap J_k\mathcal{C}).$$
 Let us first see the inclusion  ``$\supseteq$". Since $\mathcal{HC}\cap \mathcal{ILC}\subseteq \bigcap J_k^L\mathcal{C}$ (Lemma \ref{lemma4.10}), we have  that  $\tau_k^L(\mathcal{HC}\cap\mathcal{ILC})=0$ for all $k\geq 1$. Therefore, if $M\in\mathcal{HC}\cap J_k\mathcal{C}\subseteq \mathcal{HC}\cap\mathcal{ILC}$, then $\tau_k^L(M)=0$, so by Proposition \ref{proposition1seccion4} we have that $\iota_*(\tau_k(M))=\tau_k^L(M)=0$, that is, $\tau_k(M)\in\text{ker}(D_k(H)\stackrel{\iota_*}{\rightarrow} D_k(H'))$.

\noindent  We now show the inclusion ``$\subseteq$". According to Lemma \ref{lemma1prop4.7}, it is enough to prove for all $j\geq 1$ that
\begin{enumerate}
\item[(\emph{i})] $\eta^{\mathbb{Z}}_{2j-1}(v)\in\tau_{2j-1}(\mathcal{HC}\cap J_{2j-1}\mathcal{C})$ for $v$ as in Lemma \ref{lemma1prop4.7}($i$), and
\item[(\emph{ii})] $\eta^{\mathbb{Z}}_{2j}(v)\in\tau_{2j}(\mathcal{HC}\cap J_{2j}\mathcal{C})$ and $\eta_{2j}(\frac{1}{2}\text{tr}(u)\odot \text{tr}(u))\in\tau_{2j}(\mathcal{HC}\cap J_{2j}\mathcal{C})$ for $v$ and $u$ as in Lemma \ref{lemma1prop4.7}($ii$).
\end{enumerate}
Let $\mathcal{S}^{\text{odd}}_{2g}$ be the submonoid of string links $\sigma$ on $2g$ strands  in homology cubes,   with trivial linking matrix and satisfying the property that if we forget all the  odd components of $\sigma$, the obtained string link is  trivial. The Milnor-Johnson correspondence, described in subsection \ref{MJcorrespondence}, sends $\mathcal{HC}\cap\mathcal{IC}$ to $\mathcal{S}^{\text{odd}}_{2g}$. Hence by diagram (\ref{ecuacion4.9}),  proving ($i$) and $(ii)$ above is equivalent to show
\begin{enumerate}
\item[(\emph{iii})] $v\in\eta_k^{-1}\mu_{k+1}(\mathcal{S}^{odd}_{2g}\cap \mathcal{S}_{2g}[k+1])$ for  $k$ odd and  $v$ as in Lemma \ref{lemma1prop4.7}($i$), and 
\item[(\emph{iv})] $v,\frac{1}{2}\text{tr}(u)\odot \text{tr}(u)\in\eta_k^{-1}\mu_{k+1}(\mathcal{S}^{odd}_{2g}\cap \mathcal{S}_{2g}[k+1])$ for $k$ even and $v$ and $u$ as in Lemma \ref{lemma1prop4.7}($ii$).
\end{enumerate}
This can be done by using  a string link version of Cochran's realization theorems for Milnor invariants \cite[Theorem 7.2]{MR1042041} and \cite[Theorem 3.3]{MR1055569}: here we develop  \cite[Remark 8.2]{MR1783857}. This process is called \emph{antidifferentiation} and it is very close to surgery along tree claspers, see \cite[Section 7]{MR1735632}.  We sketch this process below.

 Let $S$ be as in Lemma \ref{lemma1prop4.7}. Suppose that $k=2j$. Consider $u\in\mathfrak{L}_{j+1}(H)$ a Lie commutator which  has at least one $a_i$ as one of its components. From the rooted tree $\text{tr}(u)$ we are going to recursively construct  a string link $L(\frac{1}{2}u)$ which realizes the diagram $\frac{1}{2}\text{tr}(u)\odot \text{tr}(u)$.

\textbf{Starting step}. Suppose that $u=[u_1,u_2]$. Consider the oriented  Whitehead link  and label its components by $u_1$ and $u_2$ respectively, see Figure \ref{figura21}($a$).

\textbf{Recursive step}. Suppose for example that $u_1\in\mathfrak{L}_{\geq 2}(H)$, say $u_1=[u_{11},u_{12}]$. Perform a $0$-twisted Bing doubling to the component labeled by $u_1$ and label the two new components $u_{11}$ and $u_{12}$ respectively. See Figure \ref{figura21}($b$).
\begin{figure}[ht!]
										\centering
                        \includegraphics[scale=0.79]{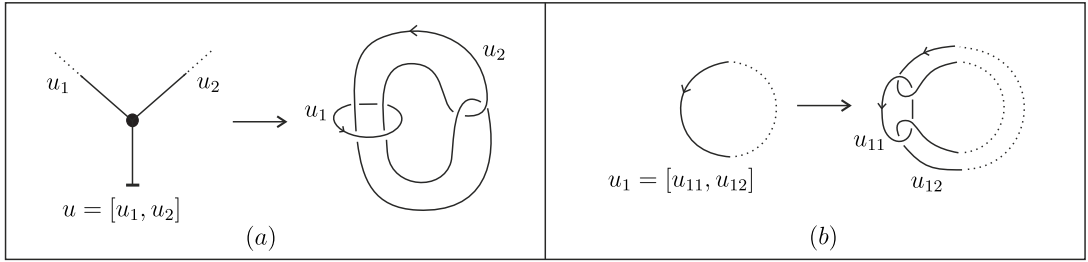}
												\caption{($a$) Starting step and ($b$) recursive step.}
												\label{figura21}
\end{figure}

\textbf{Banding step}. After finishing the above process we obtain a $(j+1)$-component link  whose components are labeled with elements of  $S$. Now, if two components have the same label, we perform an \emph{interior band sum} between the two components, see \cite[Section 7]{MR1042041} for more details. If necessary, add trivial components to the resulting link in order to obtain a $2g$-component link with components, each one, labeled by a unique element of $S$. Denote this link by $l(\frac{1}{2}u)$. Since $u$ is a Lie commutator with at least one $a_i$ as one of its components, the construction  implies that the  link $l(\frac{1}{2}u)$ becomes the trivial $g$-component  link if we forget all the components with labels $a_1,\ldots,a_g$.

\textbf{Final step}. Open the link $l(\frac{1}{2}u)$ to obtain a string link $l'(\frac{1}{2}u)$ on $2g$-strands, each one labeled by a unique element of $S$, satisfying  the property that if we forget all the components with labels $a_1,\ldots,a_g$ then it becomes the trivial $g$-component   string link. Now, by conjugating with the generators $\sigma_1,\ldots,\sigma_{2g-1}$ of the braid group on $2g$-strands, we  arrange the components of $l'(\frac{1}{2}u)$ in a such way  that the  $(2i)$-th component is labeled by $b_i$ and the $(2i-1)$-st component is labeled by $a_i$, for $i=1,\ldots,g$. Denote the resulting string link by $L(\frac{1}{2}u)$. We have that $L(\frac{1}{2}u)\in\mathcal{S}^{\text{odd}}_{2g}\cap\mathcal{S}_{2g}[k+1]$ and $\mu_{k+1}(L(\frac{1}{2}u))=\mu_{k+1}(l'(\frac{1}{2}u))=\pm\eta_k(\frac{1}{2}\text{tr}(u)\odot \text{tr}(u))$, see \cite[Remark 8.2]{MR1783857}, the sign depending on the clasp of the Whitehead link in the starting step.

\begin{example}
Let us illustrate the above process in a particular case. Suppose that $g=2$ and $u=[[a_1,b_1],a_1]$. We show in Figure \ref{figura22}($i$) the starting step, in  Figure \ref{figura22}($ii$) the recursive step. In  Figure \ref{figura22}($iii$) we perform an interior band sum and add trivial components. Finally in Figure \ref{figura23} we show the associated string link and the arrangement of its components.
\begin{figure}[ht!]
										\centering
                        \includegraphics[scale=0.79]{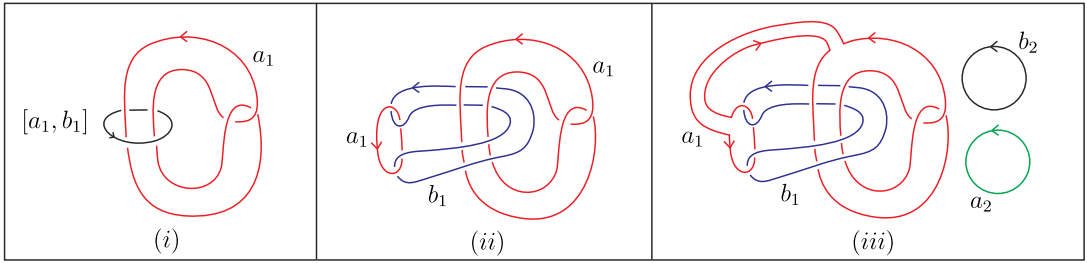}
												\caption{($i$) Starting step, ($ii$) recursive step and ($iii$)  link $l(\frac{1}{2}u)$ for $u=[[a_1,b_1],a_1]$.}
												\label{figura22}
\end{figure}

\begin{figure}[ht!]
										\centering
                        \includegraphics[scale=0.79]{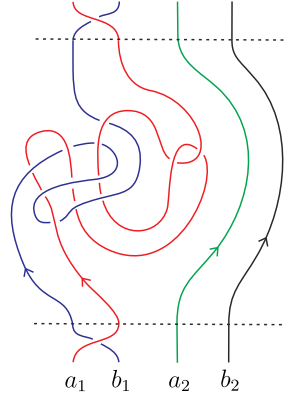}
												\caption{String link $L(\frac{1}{2}u)$ for $u=[[a_1,b_1],a_1]$.}
												\label{figura23}
\end{figure}

\end{example}

Now if $v$ is a tree-like Jacobi diagram as in Lemma \ref{lemma1prop4.7}, of i-deg $\geq 2$ (the case i-deg $=1$ is realized by a string link version of the Borromean rings), then chose any \emph{internal edge} of $v$ (edge connecting two trivalent vertices) and cut it in half to obtain  two rooted trivalent trees. Let $u_1$ and $u_2$ be the Lie commutators associated to these rooted trees. Notice that $v=\text{tr}(u_1)\odot \text{tr}(u_2)$. Consider the oriented Hopf link and label its components by $u_1$ and $u_2$ respectively, see Figure \ref{figura24}.

\begin{figure}[ht!]
										\centering
                        \includegraphics[scale=0.85]{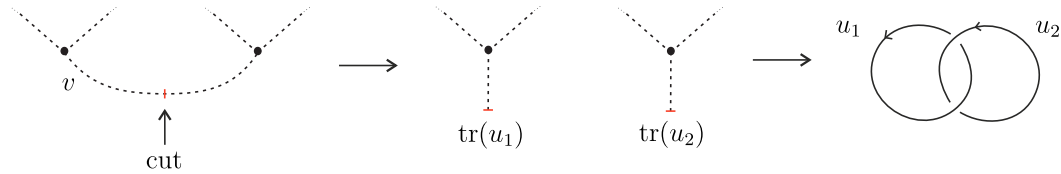}
												\caption{Starting the antidifferentiation process to realize $v$.}
												\label{figura24}
\end{figure}

\noindent Then continue the antidifferentiation process by performing the recursive step, banding step and final step. At the end we obtain a string link $L(v)\in\mathcal{S}^{\text{odd}}_{2g}\cap\mathcal{S}_{2g}[k+1]$ such that $\mu_{k+1}(L(v))=\pm\eta^{\mathbb{Z}}_k(v)$, the sign depending on the clasp of the  Hopf link that we started with to realize $v$. 
%_____________________________________________________________________________________________________
%---------------------------------------
%---------------------------------------
%---------------------------------------
%---------------------------------------
%_____________________________________________________________________________________________________
%---------------------------------------
%---------------------------------------
%---------------------------------------
%---------------------------------------
\section{The LMO functor and the Johnson-Levine homomorphisms}\label{seccion5}
 This section is devoted to the relation between the Johnson-Levine homomorphisms and the LMO functor. We refer to \cite{MR1881401,MR1931167,MR1931168} for an introduction to the LMO invariant and to \cite{MR2403806} for its functorial extension.

%_____________________________________________________________________________________________________
%---------------------------------------
%---------------------------------------
%---------------------------------------
%---------------------------------------
\subsection{Jacobi diagrams}\label{section5.1} In subsection \ref{subsection2.5} we reviewed the notion of tree-like Jacobi diagram. In this subsection we consider more general Jacobi diagrams.

A  \emph{Jacobi diagram} is a finite unitrivalent graph such that the trivalent vertices are \emph{oriented}, that is, its incident edges  are endowed with a cyclic order. Let $C$ be a finite set. We call a Jacobi diagram  $C$\emph{-colored} if its univalent vertices (or \emph{legs})  are colored with elements of the $\mathbb{Q}$-vector space spanned by $C$. The \emph{internal degree} of a Jacobi diagram is the number of trivalent vertices, we denote it by i-deg. The connected Jacobi diagram of i-deg $=0$ is called a \emph{strut}. As for tree-like Jacobi diagrams, we use dashed lines to represent Jacobi diagrams and, when we draw them, we assume that the orientation of trivalent vertices is counterclockwise. See Figure \ref{figura4.12} for some examples. 
\begin{figure}[ht!] 
										\centering
                        \includegraphics[scale=0.72]{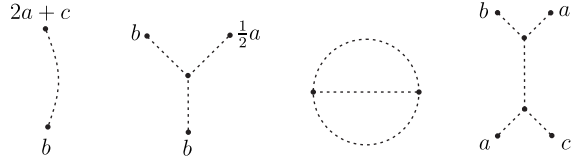}
												\caption{$C$-colored Jacobi diagrams of i-deg  $0, 1, 2$ and $2$, respectively. Here $C=\{a,b,c\}$ }
												\label{figura4.12}
\end{figure}

The space of $C$-colored Jacobi diagrams is defined as
$$\mathcal{A}(C):=\frac{\text{Vect}_{\mathbb{Q}}\{C\text{-colored Jacobi diagrams}\}}{\text{AS, IHX, $\mathbb{Q}$-multilinearity}},$$
\noindent where the relations AS, IHX  are local and the multilinearity relation applies to the $C$-colored legs, see Figure \ref{figura4.13} in subsection \ref{subsection2.5}.

The vector space $\mathcal{A}(C)$ is graded by the internal degree, thus we can consider the degree completion which we still denote by $\mathcal{A}(C)$, in other words, we also consider formal series of Jacobi diagrams. There is a product in $\mathcal{A}(C)$ given by disjoint union, and a coproduct defined by $\Delta(D):=\sum D'\otimes D''$ where the sum ranges over pairs of  subdiagrams $D',D''$ of $D$ such that $D'\sqcup D''=D$. With these structures, $\mathcal{A}(C)$ is a complete Hopf algebra. Its primitive part is the subspace $\mathcal{A}^c(C)$ spanned by connected Jacobi diagrams. We denote by $\mathcal{A}^Y(C)$ the subspace of Jacobi diagrams  such that all of their  connected components  have at least one trivalent vertex.  A Jacobi diagram in $\mathcal{A}(C)$ is \emph{looped} if it has a non-contractible component, for instance the third diagram in Figure \ref{figura4.12} is looped. The subspace generated by looped diagrams is an ideal. We denote by $\mathcal{A}^{Y,t}(C)$ the quotient of $\mathcal{A}^Y(C)$ by this ideal.

For $k\geq 1$ denote by $\mathcal{A}_k^{Y,t,c}(C)$ the subspace of $\mathcal{A}^{Y,t}(C)$ generated by connected  diagrams of i-deg $=k$. If $G$ is a finitely generated free abelian group, we define the space $\mathcal{A}(G)$  of $G$-colored Jacobi diagrams by $\mathcal{A}(G)=\mathcal{A}(C)$ where $C$ is any set of free generators of $G$.  In particular for the abelian group $H=H_1(\Sigma_{g,1};\mathbb{Z})$   we have 
$$\mathcal{A}_k^{Y,t,c}(H)=\mathcal{T}_k(H)\otimes \mathbb{Q},$$
where $\mathcal{T}(H)=\bigoplus _{k\geq 1} \mathcal{T}_k(H)$ is the group of tree-like Jacobi diagrams defined in subsection \ref{subsection2.5}.

%_____________________________________________________________________________________________________
%---------------------------------------
%---------------------------------------
%---------------------------------------
%---------------------------------------

\subsection{The LMO functor}\label{section5.3} Let us start by the definition of the target category ${}^{ts}\!\!\mathcal{A}$ of the LMO functor. For a non-negative integer $g$,  denote by $\left\lfloor g \right\rceil^*$ the set $\{1^*,\ldots, g^* \}$, where $*$ is a symbol like $+$, $-$ or $*$ itself.   The objects of the category ${}^{ts}\!\!\mathcal{A}$ are non-negative integers. The set of morphisms from $g$ to $f$ is the subspace ${}^{ts}\!\!\mathcal{A}(g,f)$ of diagrams in $\mathcal{A}(\left\lfloor g\right\rceil^+\sqcup \left\lfloor f\right\rceil^-)$ without struts whose both ends are colored by elements of $\left\lfloor g\right\rceil^+$. If $D\in {}^{ts}\!\!\mathcal{A}(g,f)$ and $E\in {}^{ts}\!\!\mathcal{A}(h,g)$ the composition $D\circ E$ is the element in ${}^{ts}\!\!\mathcal{A}(h,f)$ given by the sum  of Jacobi diagrams obtained by considering all the possible ways of gluing the $\left\lfloor g\right\rceil^+$-colored legs of $D$ with the $\left\lfloor g\right\rceil^-$-colored legs of $E$. Schematically

\centerline{\includegraphics[scale=0.85]{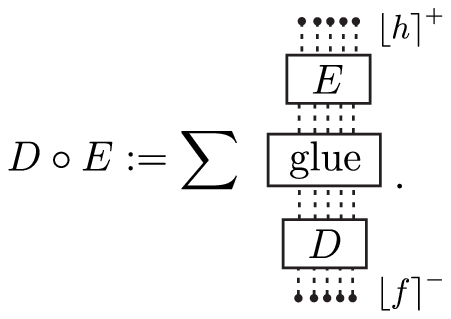}}

\noindent For example,

\centerline{\includegraphics[scale=0.9]{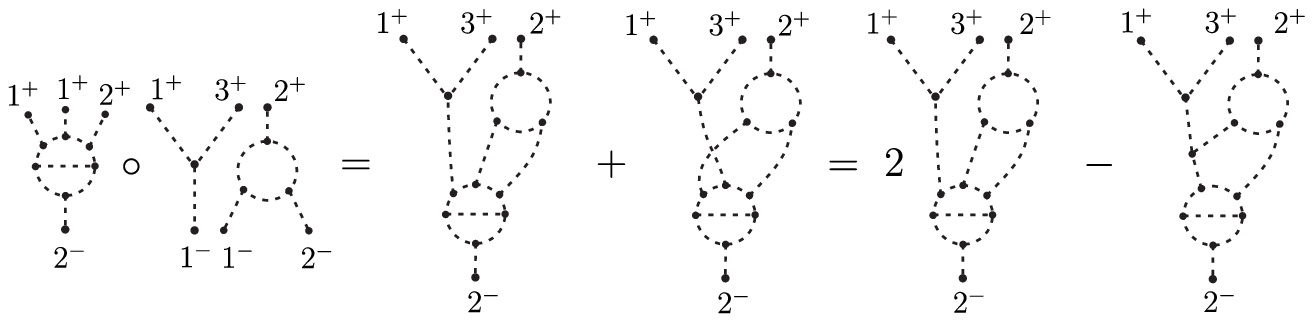}}

\medskip

\noindent where the last equality follows from the IHX relation. The identity morphism in ${}^{ts}\!\!\mathcal{A}(g,g)$ is given by

\centerline{\includegraphics[scale=0.85]{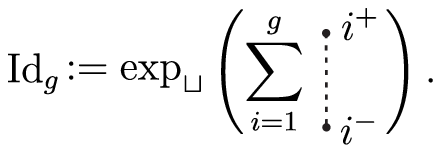}}

\noindent The category ${}^{ts}\!\!\mathcal{A}$ is called the category of \emph{top-substantial Jacobi diagrams}. 

Now, let us define the source category $\mathcal{LC}ob$ of the LMO functor, which is called the category of \emph{Lagrangian} cobordisms. The objects of  $\mathcal{LC}ob$ are non-negative integers. For all $g\geq1$, let us fix the handlebody $V_g$ and the inclusion $\iota:\Sigma_{g,1}\hookrightarrow V_g$ as in subsection \ref{seccion3.1}. A cobordism $(M,m)$ over $\Sigma_{g,1}$ belongs to $\mathcal{LC}ob(g,g)$ if it satisfies $H_1(M)=m_{-,*}(A_g)+m_{+,*}(H_1(\Sigma_{g,1};\mathbb{Z}))$ and  $m_{+,*}(A_g)\subseteq m_{-,*}(A_g)$. Recall that $A_g$ denotes the kernel of $H_1(\Sigma_{g,1};\mathbb{Z})\stackrel{\iota_*}{\longrightarrow}H_1(V_g;\mathbb{Z})$. In particular we have that the monoid of Lagrangian homology cobordisms $\mathcal{LC}_{g,1}$ is contained in $\mathcal{LC}ob(g,g)$. More generally, the set $\mathcal{LC}ob(g,f)$ is defined in a similar way  by considering cobordisms from $\Sigma_{g,1}$ to $\Sigma_{f,1}$.

For the definition of the LMO functor we need to use the \emph{Kontsevich integral}, because of this, it is necessary to change the objects of $\mathcal{LC}ob$ to obtain the category $\mathcal{LC}ob_q$: instead of non-negative integers, the objects of  $\mathcal{LC}ob_q$ are non-associative words in the single letter $\bullet$. We refer to \cite{MR2403806} for  more details.

Roughly speaking, the LMO functor $\widetilde{Z}:\mathcal{LC}ob_q\rightarrow {}^{ts}\!\!\mathcal{A}$ is defined as follows. Let $M$ be a Lagrangian cobordism (for example $M\in\mathcal{LC}_{g,1}$) and consider its  bottom-top tangle presentation $(B,\gamma')$. Next, take a \emph{surgery presentation} of $(B,\gamma')$, that is, a framed link $L\subseteq \text{int}([-1,1]^3)$ and a bottom-top tangle $\gamma$ in $[-1,1]^3$ such that surgery along $L$ carries $([-1,1]^3,\gamma)$ to $(B,\gamma')$. Then take the \emph{Kontsevich integral} of the pair $([-1,1]^3,L\cup\gamma)$, which gives a series of a kind of Jacobi diagrams. To get rid of the ambiguity in the surgery presentation, it is necessary to use some combinatorial operations on the space of diagrams. Among these operations, the so-called \emph{Aarhus integral} (see \cite{MR1931167,MR1931168}), which is a kind of formal Gaussian integration on the space of diagrams. In this way we arrive  to ${}^{ts}\!\!\mathcal{A}$. Finally, to obtain the functoriality, it is necessary to do a normalization. 

We emphasize that the definition of the Kontsevich integral requires the choice of a \emph{Drinfeld associator}, and the bottom-top tangle presentation requires the choice of a system of meridians and parallels.  Thus the LMO functor also depends on these choices.

\begin{example}\label{examplepsi} In \cite[Section 5.3]{MR2403806} the value of the LMO functor was calculated  in low degrees for the generators of $\mathcal{LC}ob$, when the chosen Drinfeld associator is even. For instance, for the Lagrangian cobordisms $\psi_{1,1}$ with bottom-top tangle presentation given in Figure \ref{figura4.28}, we have 

\medskip

\centerline{\includegraphics[scale=0.83]{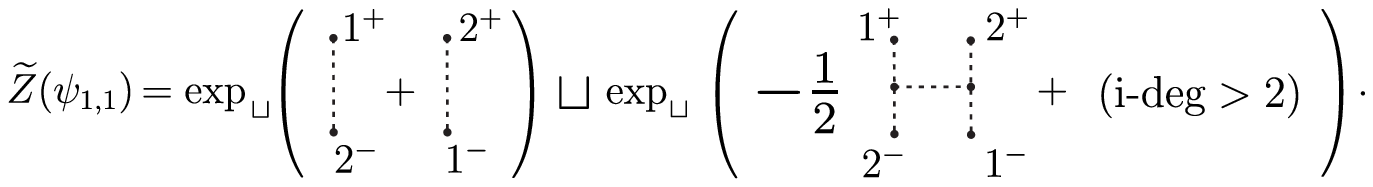}}

\medskip

\begin{figure}[ht!] 
										\centering
                        \includegraphics[scale=0.64]{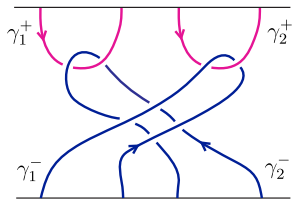}
												\caption{Bottom-top tangle presentation of $\psi_{1,1}$.}
												\label{figura4.28}
\end{figure}
\end{example}

For a matrix $\Lambda=(l_{ij})$ with entries indexed by a finite set $C$, we define the element $\left[\Lambda\right]$ in $\mathcal{A}(C)$ by

\centerline{\includegraphics[scale=0.85]{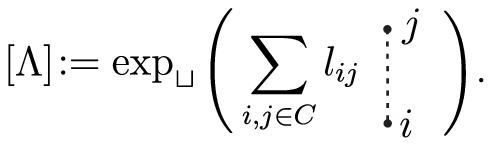}}

It was proved in \cite[Lemma 4.12]{MR2403806} that the LMO functor takes group-like values, and that if $w$ and $u$ are non-associative words in  $\bullet$ of lengths $g$ and $f$ respectively, then for $M\in\mathcal{LC}ob_q(w,u)$, $\widetilde{Z}(M)$ splits as $\widetilde{Z}(M)=\widetilde{Z}^s(M)\sqcup\widetilde{Z}^Y(M)$, where $\widetilde{Z}^Y(M)$ belongs to $\mathcal{A}^Y(\left\lfloor g\right\rceil^+\sqcup\left\lfloor f\right\rceil^{-})$ and $\widetilde{Z}^s(M)$ only contains struts. Moreover $\widetilde{Z}^s(M)$  is given by
\begin{equation}\label{struteqn}
 \widetilde{Z}^s(M)=\left[\frac{\text{Lk}(M)}{2}\right], 
\end{equation}
where $\text{Lk}(M)$ has been defined in subsection \ref{subsection2.2}, see for instance Example \ref{examplepsi}. The colors $1^+,\ldots,g^+$ and $1^-,\ldots,g^-$ in the series of Jacobi diagrams $\widetilde{Z}(M)$ refer to the curves $m_+(\beta_1)$,$\ldots$, $m_+(\beta_g)$ and $m_-(\alpha_1),\ldots, m_-(\alpha_g)$ on the top and bottom surfaces of $M$ respectively.
%_____________________________________________________________________________________________________
%---------------------------------------
%---------------------------------------
%---------------------------------------
%---------------------------------------
\subsection{Diagrammatic version of the Johnson-Levine homomorphisms}\label{subsection5.2}

In subsection \ref{subsection2.5} we recalled the diagrammatic version of the Johnson homomorphisms.  The same idea applies to the Johnson-Levine homomorphisms. We have seen in Section \ref{seccion3} that the $k$-th Johnson-Levine homomorphism takes values in $D_k(H')$. Let us consider the space $\mathcal{A}_k^{Y,t,c}(H')=\mathcal{T}_k(H')\otimes\mathbb{Q}$ of connected tree-like Jacobi diagrams of i-deg $=k$ with $H'$-colored legs. We have the isomorphism 
\begin{equation}\label{ecuacion4.15}%%%%%%%%%%%%%%%%%%%%%%%%%%%%%%%%%%%%%%%%%%%%%
\eta_k:\mathcal{A}_k^{Y,t,c}(H')\longrightarrow D_k(H')\otimes\mathbb{Q},\ \ \ \ T\longmapsto\sum \text{color}(v)\otimes (T \text{ rooted at } v),
\end{equation}
 as in subsection \ref{subsection2.5}. Define the \emph{diagrammatic version} of the $k$-th Johnson-Levine homomorphism by
$$\eta_k^{-1}(\tau_k^L(M))\in \mathcal{A}_k^{Y,t,c}(H').$$

\medskip

Moreover, if we consider the symplectic basis $\{a_1,\ldots,a_g,b_1\ldots,b_g\}$ of $H$ fixed in subsection \ref{seccion3.1}, we have that 
$$\eta_k^{-1}(\tau_k^L(M))_{|\iota_*(b_j)\mapsto j^+}\in \mathcal{A}_k^{Y,t,c}(\left\lfloor g\right\rceil^+),$$

\noindent where the expression $\iota_*(b_j)\mapsto j^+$ means to replace the color $\iota_*(b_j)$ by the color $j^+$ for $j=1,\ldots,g$.  We still denote the diagrammatic version by $\tau_k^L$.
%_____________________________________________________________________________________________________
%---------------------------------------
%---------------------------------------
%---------------------------------------
%---------------------------------------
\subsection{Relating the LMO functor and the Johnson-Levine homomorphisms}\label{subsection5.4}
 
We have seen that for $M\in J_k^L\mathcal{C}$, the homomorphism $\tau_k^L(M)$ can be seen as  taking  values in the space  $\mathcal{A}_k^{Y,t,c}(\left\lfloor g\right\rceil^+)$ of connected tree-like Jacobi diagrams  with $\left\lfloor g\right\rceil^+$-colored legs of i-deg $=k$. While  the value $\widetilde{Z}(M)$ of the LMO functor takes values in ${}^{ts}\!\!\mathcal{A}(g,g)$. Now,  $\mathcal{A}_k^{Y,t,c}(\left\lfloor g\right\rceil^+)$ is contained in ${}^{ts}\!\!\mathcal{A}(g,g)$. In this subsection we show an explicit relation between the Johnson-Levine homomorphisms and the LMO functor. Let us first start by the strut part of the LMO functor. Consider  the monoid homomorphism
\begin{equation}\label{ecuacion5.101}%%%%%%%%%%%%%%%%%%%%%%%%%%%%%%%%%%%%%%%%%%%%%
\vartheta:\mathcal{LC}\longrightarrow \text{Hom}(A,A), \ \ \ \ M\longmapsto \rho_1(M)|_A.
\end{equation}

Notice that $\mathcal{ILC}=\text{ker}(\vartheta)$. We have the following.

\begin{proposition} For $(M,m)\in\mathcal{LC}$, the homomorphism $\vartheta(M)$ is essentially the strut part $\widetilde{Z}^s(M)$ of the LMO functor not considering struts whose both ends are colored by $\left\lfloor g\right\rceil^-$.
\end{proposition}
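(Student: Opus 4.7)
The plan is to read both sides off directly from Lemma \ref{lemmalinking} and formula (\ref{struteqn}), identifying them through the matrix $\Lambda$.

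First, by Lemma \ref{lemmalinking}(i), for $M = (M,m) \in \mathcal{LC}_{g,1}$ the bottom-top tangle presentation $(B,\gamma)$ has $B$ a homology cube and
$$\text{Lk}(M) = \begin{pmatrix} 0 & \Lambda \\ \Lambda^T & \Delta \end{pmatrix},$$
with rows and columns indexed by $\lfloor g \rceil^+ \sqcup \lfloor g \rceil^-$. Tracing through the proof of that lemma, $\Lambda = (\lambda_{ij})$ is characterized by the Mayer--Vietoris expansion
$$m_{+,*}(a_j) = \sum_{k=1}^{g} \lambda_{kj}\, m_{-,*}(a_k) \in H_1(M;\mathbb{Z}),$$
in which no $m_{+,*}(b_k)$ terms appear precisely because $M \in \mathcal{LC}$ forces the $(\lfloor g\rceil^+,\lfloor g\rceil^+)$-block to vanish. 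Applying $m_{-,*}^{-1}$ yields $\vartheta(M)(a_j) = \rho_1(M)(a_j) = \sum_k \lambda_{kj} a_k$, so $\Lambda$ is exactly the matrix of $\vartheta(M)$ in the basis $\{a_1, \ldots, a_g\}$ of $A$.

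Second, by equation (\ref{struteqn}), $\widetilde{Z}^s(M) = [\text{Lk}(M)/2]$. The vanishing of the $(\lfloor g\rceil^+,\lfloor g\rceil^+)$-block produces no struts with both ends in $\lfloor g\rceil^+$, which is consistent with $\widetilde{Z}(M)$ lying in the top-substantial category ${}^{ts}\!\mathcal{A}(g,g)$. Discarding the struts whose both ends are colored by $\lfloor g\rceil^-$ amounts to throwing away the $\Delta/2$ contribution, so what remains is a diagram built from the two off-diagonal blocks $\Lambda/2$ and $\Lambda^T/2$; being paired on the same struts between $\lfloor g\rceil^+$ and $\lfloor g\rceil^-$, these two contributions combine into $g^2$ struts carrying the integer coefficients $(\lambda_{ij})$. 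Read as a morphism in ${}^{ts}\!\mathcal{A}(g,g)$ via the gluing rule of that category, such a diagram encodes exactly the linear map of matrix $\Lambda$; and via the identification of the colors $\lfloor g\rceil^-$ (resp.\ $\lfloor g\rceil^+$) with the basis $\{a_1, \ldots, a_g\}$ dictated by the bottom-top tangle construction---each $j^-$ corresponds to the bottom $2$-handle attached along $m_-(\alpha_j)$, hence to $a_j \in A$---this linear map is precisely $\vartheta(M)$.

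The main (modest) obstacle is purely bookkeeping: one must verify that the factor of $2$ arising from the symmetrization of $\Lambda$ and $\Lambda^T$ is correctly absorbed by the bracket convention $[\cdot]$, and keep straight the identifications between the colorings $\lfloor g\rceil^\pm$ and the symplectic basis of $A$. Once these conventions are fixed, the claim is immediate from the fact that both sides are encoded by the same matrix $\Lambda$. The word \emph{essentially} in the statement refers precisely to this canonical, but non-canonical-looking, identification.
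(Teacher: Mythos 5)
Your proposal is correct and follows essentially the same route as the paper: identify $\Lambda$ from equation (\ref{eq1lemma}) (with the $\chi$-block vanishing by Lemma \ref{lemmalinking}(i)) as the matrix of $\vartheta(M)$ on $A$, and read the same $\Lambda$ off the off-diagonal blocks of $\operatorname{Lk}(M)$ appearing in $\widetilde{Z}^s(M)=\left[\operatorname{Lk}(M)/2\right]$ via (\ref{struteqn}). The extra bookkeeping you flag (the factor $2$ absorbed by the symmetry of struts, and the color identifications $j^-\leftrightarrow m_-(\alpha_j)$, $j^+\leftrightarrow m_+(\beta_j)$) is handled implicitly in the paper and is consistent with its conventions.
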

\begin{proof}
Consider the same bases of $H$, $A$ and $H_1(M;\mathbb{Z})$ as in the proof of Lemma \ref{lemmalinking}. In these bases the matrix of $\vartheta(M)$ is given by $\Lambda=(\lambda_{ij})$, where $\lambda_{ij}$ are integer coefficients  as in  equation (\ref{eq1lemma}). Besides, $\widetilde{Z}^s(M)=\left[\frac{\text{Lk}(M)}{2}\right]$ and we have seen in the proof of Lemma \ref{lemmalinking} that 
$\text{Lk}(M)=\left( \begin{smallmatrix} 0 & \Lambda^T\\ \Lambda & \Delta \end{smallmatrix} \right)$. In other words, the homomorphism (\ref{ecuacion5.101}) is tantamount to the strut part of the LMO functor not considering struts whose both ends are colored by $\left\lfloor g \right\rceil^-$.
\end{proof}

We now turn to the  trivalent part of the LMO functor. For $M\in\mathcal{LC}$ denote by $\widetilde{Z}^{Y,t,+}(M)$ the element in $\mathcal{A}^{Y,t}(\left\lfloor g\right\rceil^+)$ obtained from $\widetilde{Z}^{Y}(M)$ by sending all terms with loops or with $i^{-}$-colored legs to $0$. 
Let us consider the filtration of $\mathcal{C}$ induced by $\widetilde{Z}^{Y,t,+}$. Specifically, we set 
$$\mathcal{F}_k\mathcal{C}:=\{(M,m)\in \mathcal{ILC}\ |\ \widetilde{Z}^{Y,t,+}(M)=\varnothing + (\text{terms of } \text{i-deg}\geq k) \}.$$

We call $\{\mathcal{F}_k\mathcal{C}\}_{k\geq1}$ the \emph{upper tree filtration} of $\mathcal{C}$.

\begin{proposition}\label{homprop}  Let $M,N\in\mathcal{F}_k\mathcal{C}$ and write $\widetilde{Z}^{Y,t,+}(M)=\varnothing+D_k+ (\text{\emph{i-deg}}>k)$ and $\widetilde{Z}^{Y,t,+}(N)=\varnothing+D'_k+ (\text{\emph{i-deg}}>k)$, where $D_k$ and $D'_k$ are linear combinations of connected Jacobi diagrams in $\mathcal{A}^{Y,t}(\left\lfloor g\right\rceil^+)$ of \emph{i-deg} $=k$. Then 
\begin{equation}\label{ecuacion4.14}%%%%%%%%%%%%%%%%%%%%%%%%%%%%%%%%%%%%%%%%%%%%%
\widetilde{Z}^{Y,t,+}(M\circ N)=\varnothing+ (D_k+D'_k)+(\text{\emph{i-deg}}>k).
\end{equation}
\end{proposition}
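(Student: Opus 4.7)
My plan is to compute $\widetilde{Z}^{Y,t,+}(M\circ N)$ by directly expanding the composition $\widetilde{Z}(M)\circ\widetilde{Z}(N)$ in ${}^{ts}\!\!\mathcal{A}(g,g)$ and then identifying which pairs of individual Jacobi diagrams survive the projection at internal degree $k$. Since $M\in\mathcal{ILC}$, Lemma~\ref{lemmalinking}(ii) combined with (\ref{struteqn}) gives
\[
\widetilde{Z}^s(M)=\exp\!\Bigl(\sum_{i} i^+\!-\!i^-\Bigr)\cdot\exp\!\Bigl(\tfrac{1}{2}\sum_{i,j}\Delta_M^{ij}\,i^-\!-\!j^-\Bigr),
\]
and $\widetilde{Z}(M)=\widetilde{Z}^s(M)\cdot\widetilde{Z}^Y(M)$ (analogously for $N$). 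Since the composition in ${}^{ts}\!\!\mathcal{A}$ is bilinear, I will pick individual Jacobi diagrams $D$ appearing in $\widetilde{Z}(M)$ and $E$ appearing in $\widetilde{Z}(N)$ and sum over all ways of pairing $D$'s $i^+$-colored legs with $E$'s $i^-$-colored legs.

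The key structural reduction is this: for $(D,E)$ to contribute to $\widetilde{Z}^{Y,t,+}(M\circ N)$, the glued diagram must have only $+$-colored legs, no loops, and no strut components. Because the composition only pairs $D$'s $+$-legs with $E$'s $-$-legs, any $-$-leg of the result is an unglued $-$-leg of $D$; so $D$ must have no $-$-leg. Since every strut in $\widetilde{Z}^s(M)$ carries at least one $-$-end, the strut factor of $D$ must be empty, and hence $D\in\widetilde{Z}^{Y,t,+}(M)=\varnothing+D_k+(\text{i-deg}>k)$. Consequently, at internal degree $k$ in the composition, only the splittings $(\text{i-deg}(D),\text{i-deg}(E))\in\{(0,k),(k,0)\}$ can contribute; the intermediate cases $(a,k-a)$ with $0<a<k$ are killed by the vanishing of $\widetilde{Z}^{Y,t,+}(M)$ in internal degrees $1,\dots,k-1$.

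In case $(0,k)$, $D=\varnothing$ and no gluing takes place, so the composition reduces to $E$; the projection forces $E\in\widetilde{Z}^{Y,t,+}(N)$, whose internal-degree-$k$ part is exactly $D'_k$. In case $(k,0)$, $D$ is a connected tree from $D_k$ with, say, $n_i^+$ legs colored $i^+$, and $E$ is a strut-only term of $\widetilde{Z}^s(N)$ providing $n_i^+$ legs of color $i^-$. Choosing $E=\prod_i (i^+\!-\!i^-)^{n_i^+}/n_i^+!$ from $\exp(\sum_i i^+\!-\!i^-)$ glues back to $D$ itself, the $\prod_i n_i^+!$ pairings cancelling the factorials; summing over $D$ gives exactly $D_k$.

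The main obstacle is the loop-avoidance step in case $(k,0)$: I must rule out contributions in which $E$ uses struts $i^-\!-\!j^-$ from the $\Delta_N$-factor of $\widetilde{Z}^s(N)$. An unglued $-$-end would leave a $-$-leg in the result, while gluing both $-$-ends to legs of $D$ inserts a chord between two vertices of the connected tree $D$, creating a cycle; both cases are killed by the projection. This argument uses crucially that each component of $D_k$ is a tree, which follows from the primitivity of $\log\widetilde{Z}^Y(M)$ (as $\widetilde{Z}(M)$ is group-like). Assembling the two surviving contributions then yields $\widetilde{Z}^{Y,t,+}(M\circ N)=\varnothing+(D_k+D'_k)+(\text{i-deg}>k)$ as claimed.
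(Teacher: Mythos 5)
Your proof is correct, and it reaches the conclusion by a more self-contained route than the paper. The paper quotes the composition formula for the $Y$-part of the LMO functor (equation (\ref{inneraarh}), i.e.\ Lemma 4.5 of Cheptea--Habiro--Massuyeau), in which the struts of $M$ are already integrated out, so the block $\Delta$ of $\text{Lk}(M)$ enters through the substitutions $j^+\mapsto j^*+j^++\Delta\cdot j^-$ and the factor $\left[\Delta/2\right]$; it then checks that lower-degree terms carrying loops or $\left\lfloor g\right\rceil^-$-colored legs cannot contribute in internal degree $\leq k$, and leaves the identification of the degree-$k$ part as a direct verification. You instead use only functoriality, $\widetilde{Z}(M\circ N)=\widetilde{Z}(M)\circ\widetilde{Z}(N)$, the splitting $\widetilde{Z}=\widetilde{Z}^s\sqcup\widetilde{Z}^Y$ with $\widetilde{Z}^s$ given by (\ref{struteqn}) and Lemma \ref{lemmalinking}(ii), and the bare definition of composition in ${}^{ts}\!\!\mathcal{A}$; your exclusion of the $\Delta$-struts (for you those of $N$, since you compose before integrating anything out) by the observation that a chord attached to a connected tree either leaves a $-$-leg or creates a cycle is the hand-made counterpart of the paper's projection argument, and your factorial bookkeeping against the exponential of the struts $i^+i^-$ supplies the degree-$k$ identification that the paper only sketches; this buys independence from the quoted lemma at the cost of redoing its strut bookkeeping. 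Two points to tighten in your write-up: tree-ness of the representatives of $D_k$ comes from the quotient by looped diagrams (the ``$t$'' in $\mathcal{A}^{Y,t}$), while group-likeness/primitivity only yields connectedness, which the statement of the proposition already grants; and since you argue with individual diagrams of $\widetilde{Z}(M)$, note that the loop-free, $-$-leg-free part of $\widetilde{Z}^Y(M)$ in internal degree $k$ may a priori contain disconnected terms (disjoint unions of lower-degree pieces). This is harmless because the operation ``compose with $\widetilde{Z}(N)$, then kill struts, loops and $-$-legs'' is linear and annihilates looped and $-$-legged diagrams, hence factors through $\widetilde{Z}^{Y,t,+}(M)$, whose parts in degrees $1,\dots,k-1$ vanish and whose degree-$k$ part is the class $D_k$; so the contribution can indeed be computed on connected tree representatives exactly as you do, but this justification should be said.
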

\begin{proof}
For simplicity of notation, we write $\text{D}(\cdot)$ instead of $\widetilde{Z}^{Y,t,+}(\cdot)$ and $\hat{\text{D}}(\cdot)$ instead $\widetilde{Z}^Y(\cdot)$.  Suppose that
\begin{equation*}
\text{Lk}(M)=\left( \begin{smallmatrix} 0 & \text{Id}_g\\ \text{Id}_g & \Delta \end{smallmatrix} \right).
\end{equation*}
It follows from Lemma 4.5 in \cite{MR2403806}  that 
\begin{equation}\label{inneraarh}
\hat{\text{D}}(M\circ N)=\Bigg\langle \Big(\hat{\text{D}}(M)_{|j^+\mapsto j^*+j^+ +{\Delta}\cdot j^-}\Big) , \Big(\left[{\Delta}/{2}\right]_{|j^-\mapsto j^*}\Big)\sqcup\Big(\hat{\text{D}}(N)_{|j^-\mapsto j^*+j^-}\Big)\Bigg\rangle_{\left\lfloor g\right\rceil^*},
\end{equation}
where ${\Delta}\cdot j^-=\sum_{p=1}^g l_{pj}p^-$ with ${\Delta}=(l_{pq})$, and for $E,F\in {}^{ts}\!\!\mathcal{A}(\left\lfloor g\right\rceil^*\sqcup C)$, the element $\left\langle E,F\right\rangle_{\left\lfloor g\right\rceil^*}\in {}^{ts}\!\!\mathcal{A}( C)$ is defined as the linear combination of  Jacobi diagrams obtained from $E$ and $F$ by considering all  possible ways of gluing all the $\left\lfloor g\right\rceil^*$-colored legs of $E$ with all the $\left\lfloor g\right\rceil^*$-colored legs of $F$, see \cite{MR1931167,MR1931168} for details about this operation.

\noindent It is possible for $\hat{\text{D}}(M)$ and $\hat{\text{D}}(N)$ to have  diagrams of i-deg $<k$ but with some $\left\lfloor g\right\rceil^-$-colored legs or with loops, thus we need to check that this kind of  diagrams do not contribute any  terms of i-deg $\leq k$ to
 $$(\hat{\text{D}}(M\circ N))_{|j^-,\text{\ loops\ }\mapsto\  0}=\text{D}(M\circ N).$$
 The diagrams with loops remain after the pairing (\ref{inneraarh}), so they do not contribute any term to $\text{D}(M\circ N)$. Let $E$ be a diagram of i-deg $<k$ without loops and having  $\left\lfloor g\right\rceil^-$-colored legs, suppose that $E$ appears in $\hat{\text{D}}(M)$, hence $E':=E_{|j^+\mapsto j^*+j^+ +{\Delta}\cdot j^-}$  still has $\left\lfloor g\right\rceil^-$-colored legs. Therefore all the diagrams obtained from $E'$  after the pairing (\ref{inneraarh}) still have $\left\lfloor g\right\rceil^-$-colored legs, so they do not appear in $\text{D}(M\circ N)$.  Now suppose that  $E$ appears in $\hat{\text{D}}(N)$. In this case $E'':=E_{|j^-\mapsto j^*+j^-}$ can be written as $E''=E_1+E_2$, where $E_1$ is a linear combination of diagrams with $\left\lfloor g\right\rceil^-$-colored legs and $E_2$ is a linear combination of diagrams without $\left\lfloor g\right\rceil^-$-colored legs. The diagrams of $E_1$  do not contribute to $\text{D}(M\circ N)$ as in the previous case. The diagrams obtained from  $E_2$ after the pairing (\ref{inneraarh}) could only contribute diagrams of i-deg $>k$ to $\text{D}(M\circ N)$. Summarizing, we have shown that $\text{D}(M\circ N)=\varnothing + (\text{i-deg}\geq k)$. It remains to show that the terms of i-deg $=k$ are exactly those given by $D_k+D_k'$. This can be easily checked  by using  formula (\ref{inneraarh}).
\end{proof}

The above proposition shows that $\mathcal{F}_k \mathcal{C}$ is a monoid and that we can define homomorphisms 
$$\widetilde{Z}^{Y,t,+}_k:J_k^L\mathcal{C}\longrightarrow\mathcal{A}_k^{Y,t,c}(\left\lfloor g\right\rceil^+),$$
for all $k\geq1$, where  $\widetilde{Z}^{Y,t,+}_k(M)$  denotes the terms of i-deg $=k$ in $\widetilde{Z}^{Y,t,+}(M)$ for  $M\in J_k^L\mathcal{C}$. The following theorem shows that the upper tree filtration coincides with the Johnson-Levine filtration, and makes explicit the relation between the LMO functor and the Johnson-Levine homomorphisms.

\begin{theorem} For all $k\geq 1$,
\begin{equation}\label{ecuacion4.12}%%%%%%%%%%%%%%%%%%%%%%%%%%%%%%%%%%%%%%%%%%%%%
\mathcal{F}_k\mathcal{C}=J_k^L\mathcal{C}.
\end{equation}
Moreover, if $M\in J_k^L\mathcal{C}$ then 
\begin{equation}\label{ecuacion4.13}%%%%%%%%%%%%%%%%%%%%%%%%%%%%%%%%%%%%%%%%%%%%%
\widetilde{Z}^{Y,t,+}(M)=\varnothing + \tau_k^L(M) + (\text{\emph{i-deg}}>k).
\end{equation}
\end{theorem}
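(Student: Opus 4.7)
The plan is to prove (\ref{ecuacion4.12}) and (\ref{ecuacion4.13}) simultaneously by induction on $k$, relying on four ingredients: the additivity of the leading tree term (Proposition \ref{homprop}), the decomposition of $J_k^L\mathcal{C}$ from Theorem \ref{teorema1}, Massuyeau's interpretation \cite{MR2403806,MR2903772} of the tree reduction of $\widetilde{Z}$ on homology cylinders in terms of the Johnson homomorphisms, and a vanishing lemma asserting $\widetilde{Z}^{Y,t,+}(P) = \varnothing$ for every $P \in \mathcal{HC} \cap \mathcal{ILC}$. As a preliminary observation, since $Y_{k+1}$-surgeries modify $\widetilde{Z}$ only in internal degrees $\geq k+1$, the map $\widetilde{Z}^{Y,t,+}_k$ is $Y_{k+1}$-invariant, and Proposition \ref{homprop} promotes it to a group homomorphism on $\mathcal{F}_k\mathcal{C}/Y_{k+1}$. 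The equality $\mathcal{F}_1\mathcal{C} = \mathcal{ILC} = J_1^L\mathcal{C}$ is immediate from the definitions.

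For the inductive step, assume both statements at stage $k-1$. For any $M \in J_{k-1}^L\mathcal{C} = \mathcal{F}_{k-1}\mathcal{C}$, the formula at stage $k-1$ reads
\begin{equation*}
\widetilde{Z}^{Y,t,+}(M) = \varnothing + \tau_{k-1}^L(M) + (\text{i-deg} > k-1),
\end{equation*}
so both conditions $M \in \mathcal{F}_k\mathcal{C}$ and $M \in J_k^L\mathcal{C} = \ker \tau_{k-1}^L$ reduce to $\tau_{k-1}^L(M) = 0$, establishing (\ref{ecuacion4.12}) at stage $k$. For (\ref{ecuacion4.13}) at stage $k$, I apply Theorem \ref{teorema1} with $l=1$ to write $\{M\} = \{N\}\{P\}$ in $\mathcal{C}/Y_{k+1}$ with $N \in J_k\mathcal{C}$ and $P \in \mathcal{HC} \cap \mathcal{ILC}$, and use Proposition \ref{homprop} to split $\widetilde{Z}^{Y,t,+}_k(M) = \widetilde{Z}^{Y,t,+}_k(N) + \widetilde{Z}^{Y,t,+}_k(P)$, where the second summand vanishes by the vanishing lemma. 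For the first, Massuyeau's formula identifies the internal-degree-$k$ tree part of $\widetilde{Z}(N)$ with $\tau_k(N) \in D_k(H) \otimes \mathbb{Q}$ under the correspondence $a_i \leftrightarrow i^-$, $b_i \leftrightarrow i^+$; setting the $j^-$-coloured legs to zero implements precisely the projection $\iota_* : D_k(H) \to D_k(H')$, whence Proposition \ref{proposition1seccion4} gives $\widetilde{Z}^{Y,t,+}_k(N) = \iota_*(\tau_k(N)) = \tau_k^L(N) = \tau_k^L(M)$, the last equality using $\tau_k^L(P) = 0$ from Lemma \ref{lemma4.10} together with the $Y_{k+1}$-invariance of $\tau_k^L$.

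The main obstacle is the vanishing lemma $\widetilde{Z}^{Y,t,+}(P) = \varnothing$ for $P \in \mathcal{HC} \cap \mathcal{ILC}$. My approach is to exploit the defining identity $P \cup_{m_-} V = V$: viewing $V$ as a morphism $\Sigma_{g,1} \to \Sigma_{0,1}$ in $\mathcal{LC}ob_q$, functoriality of $\widetilde{Z}$ yields $\widetilde{Z}(V) = \widetilde{Z}(V) \circ \widetilde{Z}(P)$, and an analysis of $\widetilde{Z}(V)$ via its bottom-top tangle presentation (which has no top components, only trivial bottom arcs filling the $\alpha_i$-disks) should force every connected tree component of $\widetilde{Z}(P)$ without a $j^-$-coloured leg to be killed in the Aarhus pairing with $\widetilde{Z}(V)$. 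The explicit composition formula from \cite[Lemma 4.5]{MR2403806} already used in the proof of Proposition \ref{homprop} is the main technical tool for this analysis.
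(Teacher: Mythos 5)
Your argument follows the paper's proof almost step for step: the same observation that formula (\ref{ecuacion4.13}) in degree $k$ yields (\ref{ecuacion4.12}) in degree $k+1$, with $\mathcal{F}_1\mathcal{C}=\mathcal{ILC}=J_1^L\mathcal{C}$ as base case; the same decomposition $\{M\}=\{N\}\{P\}$ in $\mathcal{C}/Y_{k+1}$ from Theorem \ref{teorema1}; the same use of Proposition \ref{homprop} together with the $Y_{k+1}$-invariance of the internal-degree-$k$ part; and the same identification $\widetilde{Z}^{Y,t,+}_k(N)=\iota_*\tau_k(N)=\tau_k^L(N)=\tau_k^L(M)$ via \cite[Theorem~8.19]{MR2403806} (or \cite{MR2903772}) and Proposition \ref{proposition1seccion4}. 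The only point where you depart from the paper is the vanishing statement $\widetilde{Z}^{Y,t,+}(P)=\varnothing$ for $P\in\mathcal{HC}\cap\mathcal{ILC}$, and there your proposal has a genuine gap: you only assert that the functorial identity $\widetilde{Z}(V)=\widetilde{Z}(V)\circ\widetilde{Z}(P)$ ``should force'' the vanishing.

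As written this is not yet a proof. The identity itself is legitimate ($V$ is a Lagrangian cobordism from $\Sigma_{g,1}$ to $\Sigma_{0,1}$, and $V\circ P=V$ is the defining condition of $\mathcal{HC}$), but $\widetilde{Z}(V)$ is not the empty series in general, and in the pairing of \cite[Lemma~4.5]{MR2403806} the $\left\lfloor g\right\rceil^+$-only tree terms of $\widetilde{Z}(P)$ appear on the right-hand side together with cross terms in which diagrams of $\widetilde{Z}(P)$ carrying $j^-$-coloured legs are glued into diagrams of $\widetilde{Z}(V)$; such gluings can create new $\left\lfloor g\right\rceil^+$-only trees of the same internal degree, so the equation does not isolate $\widetilde{Z}^{Y,t,+}(P)$ without a further, nontrivial lowest-degree analysis (in particular one would also need control of $\widetilde{Z}(V)$ itself). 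The paper closes exactly this step without any such analysis: the condition $V\circ P=V$ means that $\mathcal{HC}$ is contained in the class of \emph{special Lagrangian cobordisms} of \cite{MR2403806}, and Corollary~5.4 there states precisely that $(\widetilde{Z}(P))_{|j^-\mapsto 0}=\varnothing$, hence $\widetilde{Z}^{Y,t,+}_k(P)=0$. Quoting that result (rather than attempting to re-derive it) completes your argument; if you insist on a self-contained proof of the vanishing, you must actually carry out the degree-by-degree bookkeeping sketched above, which is the substance of the cited corollary.
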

\begin{proof}
Notice that if  equality (\ref{ecuacion4.13}) holds then $\mathcal{F}_{k+1}\mathcal{C}=J_{k+1}^L\mathcal{C}$. From the definitions,  $\mathcal{F}_1\mathcal{C}=\mathcal{ILC}=J_1^L\mathcal{C}$. Therefore, it is enough to show that equality (\ref{ecuacion4.12}) implies equality (\ref{ecuacion4.13}) for all $k\geq 1$.

\noindent Let $M\in J_k^L\mathcal{C}=\mathcal{F}_k\mathcal{C}$. Theorem \ref{teorema1} allows us to  write
$$\{M\}_{Y_{k+1}}=\{N\}_{Y_{k+1}}\{P\}_{Y_{k+1}},$$
with $N\in J_k\mathcal{C}$ and $P\in \mathcal{HC}\cap\mathcal{ILC}$. From Lemma \ref{lemma4.10} it follows that  $\tau_k^L(P)=0$. Hence by the invariance of $\tau_k^L$ under $Y_{k+1}$-surgery (see subsection \ref{seccion4.2}), we have
$$\tau_k^L(M)=\tau_k^L(N)+\tau_k^L(P)=\tau_k^L(N).$$
By Proposition \ref{proposition1seccion4}, we conclude that $\tau_k^L(M)$ is equal to the reduction of $\tau_k(N)$ under the map $\iota_*:D_k(H)\rightarrow D_k(H')$.

\noindent Besides, by the invariance under $Y_{k+1}$-surgery of the i-deg $=k$ part of the  LMO functor  and Proposition \ref{homprop}, we have 
$$\widetilde{Z}^{Y,t,+}_k(M)=\widetilde{Z}^{Y,t,+}_k(N)+\widetilde{Z}^{Y,t,+}_k(P).$$

\noindent The monoid $\mathcal{HC}$ is contained in the category of \emph{special Lagrangian cobordisms} introduced in \cite{MR2403806}. For every  cobordism $Q$ of this kind, it was proved in  \cite[Corollary 5.4]{MR2403806} that $(\widetilde{Z}(Q))_{|j^-\mapsto 0}=\varnothing$.  Now $P\in \mathcal{HC}\cap\mathcal{ILC}$, so  we have $\widetilde{Z}^{Y,t,+}_k(P)=0$. It follows from Theorem 8.19 in \cite{MR2403806} that $\widetilde{Z}^{Y,t,+}_k(N)$ is the reduction of $\tau_k(N)$ under the map $\iota_*:D_k(H)\rightarrow D_k(H')$, and so it is for $\widetilde{Z}^{Y,t,+}_k(M)$. Hence the theorem follows.
\end{proof}

As an immediate consequence of the above theorem we have the following corollary.

\begin{corollary} For $M\in J_k^L\mathcal{C}$, the upper tree reduction of the LMO functor of internal degree $k$, $\widetilde{Z}_k^{Y,t,+}(M)$, is independent of the choice of a Drinfeld associator. Moreover,
$$\big(\widetilde{Z}_k^{Y,t,+}(M)\big)_{|j^+\mapsto \iota_*(b_j)}\in \mathcal{T}_k(H')\otimes\mathbb{Q}$$
 is also independent of the choice of the system of meridians and parallels used in the definition of the LMO functor.
\end{corollary}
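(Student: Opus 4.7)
The plan is to deduce both assertions directly from the preceding theorem, which identifies the leading term $\widetilde{Z}_k^{Y,t,+}(M)$ with the diagrammatic Johnson-Levine invariant $\tau_k^L(M)\in \mathcal{A}_k^{Y,t,c}(\lfloor g\rceil^+)$. Both claims then reduce to the observation that $\tau_k^L$ is defined by purely topological/algebraic data on the cobordism side, with no reference to the auxiliary choices involved in constructing $\widetilde{Z}$.

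First I would address the independence from the Drinfeld associator. Recall from Section \ref{seccion3} that the topological homomorphism $\tau_k^L:J_k^L\mathcal{C}\to D_k(H')$ is defined via the map $a\in A\mapsto \{\iota_{k+2}\rho_{k+1}(M)(\{\alpha\})\}_{k+2}$ with $\alpha\in\mathbb{A}$ such that $\text{ab}(\alpha)=a$; and the diagrammatic version from subsection \ref{subsection5.2} is obtained by applying the canonical isomorphism $\eta_k^{-1}$ and the relabeling $\iota_*(b_j)\mapsto j^+$. None of these ingredients involves the Kontsevich integral or any associator. Since the theorem gives $\widetilde{Z}_k^{Y,t,+}(M)=\tau_k^L(M)$ in $\mathcal{A}_k^{Y,t,c}(\lfloor g\rceil^+)$, the first assertion follows at once.

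For the second assertion, I would interpret the substitution $j^+\mapsto \iota_*(b_j)$ as inverting the very identification used in subsection \ref{subsection5.2} to pass from $\mathcal{T}_k(H')\otimes\mathbb{Q}=\mathcal{A}_k^{Y,t,c}(H')$ to $\mathcal{A}_k^{Y,t,c}(\lfloor g\rceil^+)$. Under this substitution, the theorem gives
\[
\big(\widetilde{Z}_k^{Y,t,+}(M)\big)_{|j^+\mapsto \iota_*(b_j)}=\eta_k^{-1}\big(\tau_k^L(M)\big)\in \mathcal{T}_k(H')\otimes\mathbb{Q}.
\]
The right-hand side is constructed entirely from the intrinsic data $(M,m,\iota\colon\Sigma\hookrightarrow V)$: the element $\tau_k^L(M)\in D_k(H')$ is defined without choosing any basis of $H$, and $\eta_k$ is a canonical (basis-free) isomorphism between $\mathcal{T}_k(H')\otimes\mathbb{Q}$ and $D_k(H')\otimes\mathbb{Q}$. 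Consequently this element is independent of the particular system of meridians and parallels of $\Sigma$ fixed in Figure \ref{figura3.0} to build the bottom-top tangle presentation and hence the LMO functor.

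There is no substantial obstacle here: the entire content has already been established in the theorem, and the corollary is obtained simply by unpacking the identifications of subsections \ref{subsection5.2} and \ref{subsection5.4} and noting which choices enter the definition of $\tau_k^L$.
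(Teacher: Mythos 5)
Your proposal is correct and matches the paper's intent: the paper states the corollary as an immediate consequence of the theorem, and your argument simply unpacks the identifications of subsections \ref{subsection5.2} and \ref{subsection5.4}, noting that $\tau_k^L(M)\in D_k(H')$ and the canonical isomorphism $\eta_k$ involve neither an associator nor the choice of meridians and parallels. No issues.
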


%_____________________________________________________________________________________________________
%---------------------------------------
%---------------------------------------
%---------------------------------------
%---------------------------------------

\bibliographystyle{plain}
\bibliography{Johnson-Levine-hom} 
\end{document}